\documentclass{amsart}

\usepackage{amsmath}
\usepackage{amsfonts}       
\usepackage{amsthm}
\usepackage{amssymb}
\usepackage{xcolor}
\usepackage{color}
\usepackage{multicol}
\usepackage{mathtools}
\usepackage{soul}
\usepackage[pdftex]{graphicx}
\usepackage{caption}
\usepackage{float}
\usepackage{epsfig}
\usepackage{pstricks,pst-grad}
\usepackage{graphicx}
\usepackage{todonotes}
\usepackage{nicefrac}
\usepackage{cases}
\usepackage{bm}

\usepackage{pgfplots}
\pgfplotsset{compat=1.17}
\usepgfplotslibrary{fillbetween} 

\newtheorem{theorem}{Theorem}[section]
\newtheorem{lemma}[theorem]{Lemma}

\theoremstyle{definition}

\theoremstyle{remark}

\newtheorem*{theorem*}{Theorem}

\newcommand{\be}{\begin{equation}}
\newcommand{\ee}{\end{equation}}
\newcommand{\ba}{\begin{align}}
\newcommand{\ea}{\end{align}}
\newcommand{\ban}{\begin{align*}}
\newcommand{\ean}{\end{align*}}
\newcommand{\la}{\label}

\newcommand{\medcup}{\mathbin{\scalebox{1.5}{\ensuremath{\cup}}}}

\newcommand{\xo}{\xi_1}
\newcommand{\xt}{\xi_2}
\newcommand{\eo}{\eta_1}
\newcommand{\et}{\eta_2}
\newcommand{\kbg}{k_{\beta,\gamma}}

\newcommand{\kg}{k_{\beta,\gamma}}

\newcommand{\R}{\mathbb{R}}
\newcommand{\C}{\mathbb{C}}

\newcommand{\darr}[4]{{\left\{\begin{array}{ll}
{#1}&{#2}\\[0.2cm]
{#3}&{#4}
\end{array}\right.}}

\def\g{{g_{\lambda}}}

\def\g{{g_{\lambda}}}

\def\g1{{g_{1/2}}}

\newcommand{\ia}{({\rm i})}
\newcommand{\ib}{({\rm ii})}

\title{The fundamental solution of a family of anisotropic fourth-order 
parabolic equations: small-time asymptotics and related Gaussian estimates}

\date{}

\author{G. Barbatis}
\address{Department of Mathematics, National and Kapodistrian University of Athens, \newline
15784 Athens, Greece}
\email{gbarbatis@math.uoa.gr}

\author{E. Karafylis}
\address{Department of Mathematics, National and Kapodistrian University of Athens, \newline
15784 Athens, Greece}
\email{sotikar@math.uoa.gr; sot1kar@gmail.com}

\dedicatory{Dedicated to the memory of E.B. Davies}

\begin{document}
\date{\today}

\vspace{11pt}

\begin{abstract}
We study the fundamental solutions of a class of uniformly elliptic 
fourth-order parabolic equations in two dimensions, whose principal 
symbols are not necessarily strongly convex. For constant-coefficient 
operators, we derive precise small-time asymptotic formulas along certain 
special directions. These formulas are obtained by the method of 
steepest descent and reveal the dependence of the asymptotic behavior on 
the coefficients of the symbol. We subsequently consider operators with 
bounded measurable coefficients, subject to Dirichlet boundary 
conditions, and establish Gaussian bounds for their fundamental solutions 
by means of Davies' exponential perturbation method. The exponential 
constant in these bounds is shown to be sharp, as a consequence of the 
constant-coefficient asymptotics.
\end{abstract}

\subjclass[2020]{35K25, 35B40, 35J30}

\keywords{higher-order parabolic equations; small-time asymptotics;
Gaussian estimates}

\maketitle

\section{Introduction}

In this article we study two related problems concerning
a class of anisotropic fourth order linear parabolic equations in two 
dimensions. The associated elliptic operator is assumed to be uniformly 
elliptic and to have, in the variable coefficients case, a symbol of the 
form
\be
 A(x,\xi) =\xi_1^4 + 4\gamma(x) \,
 \xi_1^3 \, \xi_2 +  2\beta(x)\, \xi_1^2 \, \xi_2^2 
  +4\gamma(x) \, \xi_1 \, \xi_2^3 +\xi_2^4 \; .
\la{pr:sy}
\ee
The first problem concerns the small-time asymptotics of the fundamental
solution when the operator has constant coefficients and
acts on $L^2(\R^2)$. In this case the fundamental solution is given by
\[
G(x,t) = \frac{1}{(2\pi)^2} \int_{\R^2}e^{i \, x\cdot \xi -tA(\xi)}d\xi
\]
and the steepest descent method is applied to obtain the asymptotic
formula.
The second problem concerns pointwise Gaussian bounds on
the fundamental solution in the case of uniformly elliptic operators
with bounded measurable coefficients acting on $L^2(\Omega)$; in case
$\Omega \subsetneqq \R^2$ Dirichlet boundary conditions are imposed
on $\partial\Omega$. We apply  Davies' exponential perturbation technique 
and obtain the sharp constant in the exponential factor, where the 
sharpness follows from the asymptotics of the first problem.

Small-time asymptotics for higher-order operators in dimension
larger than one go back to the work of Fedoryuk \cite{F1}.
Later Evgrafov et al. \cite{EP1} considered constant coefficient
operators of any order $2m$ acting on $L^2(\R^d)$ for any dimension
$d\geq 2$ and obtained small-time asymptotics
under the key hypothesis, introduced
therein, of the strong
convexity of the symbol $A(\xi)$; this amounts to a certain auxiliary 
matrix being positive definite.
The main asymptotic formula of \cite{EP1} reads
\[
G(x,t) \; \sim \; \exp\Big\{ - \sigma_m\frac{ 
p(x)^{\nicefrac{2m}{2m-1}}}{t^{\nicefrac{1}{2m-1}}}\Big\} \; , \qquad 
\mbox{ as }t\to 0+ \, ,
\]
where $x\in\R^d\setminus\{0\}$ is fixed and
subexponential terms have been omitted for the sake of
simplicity. Here
$p(x)$ denotes the Finsler-type norm defined by
\be
p(x) =\max_{\xi \neq 0} \frac{x\cdot \xi}{ A(\xi)^{\nicefrac{1}{2m}}},
\qquad x\in\R^d ;
\la{fin}
\ee
this is the natural norm for the formulation of
results involving the operator with symbol $A(\xi)$.
The exponential constant $\sigma_m$ is given explicitly, depends only
on $m$ and for $m=2$ it is
\be
\sigma_2 = 3 \cdot 2^{-\nicefrac{11}{3}} . 
\la{sig2}
\ee
In the subsequent article \cite{EP2} it was shown that strong convexity
cannot be relaxed to convexity, disproving a conjecture from
\cite{EP1}.\footnote{We also note the 
article \cite{LW} which was published much later than \cite{EP1} and 
where the special case of the polyharmonic operator $(-\Delta)^m$ was 
considered.}
Tintarev \cite{T} generalized the result of \cite{EP1}
obtaining small-time asymptotics for the fundamental solution for 
equations with variable smooth coefficients with strongly 
convex symbols. In \cite{Ste}  small-time
asymptotics as well as Gaussian estimates were obtained for
perurbations  by a bounded potential of the bilaplacian in
$\R^3$.
For the large-time behavior of the fundamental solution and the
related problem of eventual positivity
see \cite{DGM,Da95b,GGS,RS-C2} and references therein.

As remarked in \cite{T}, strong convexity is a rather strong assumption.
Small-time asymptotics without the strong convexity hypothesis
were obtained in \cite{BB} where a class of fourth order operators 
in two dimensions was considered.
The associated symbols were of the form
\be
 A(\xi) =\xi_1^4 +2\,\beta\,\xi_1^2\,\xi_2^2 + \xi_2^4 \; , \qquad
 \xi\in\R^2.
\la{bb}
\ee
where $\beta>-1$ by ellipticity\footnote{The more general symbol
$\alpha\xi_1^4 +2\,\beta\,\xi_1^2\,\xi_2^2 + \gamma\xi_2^4$ reduces
to \eqref{bb} by a 
simple change of variables.}. This symbol is strongly convex
when $0<\beta<3$, so the  complementary cases $\beta\in (-1,0)$ and
$\beta\in (3,\infty)$ were considered.

In the present article we continue the study of non-strongly convex
symbols and consider symbols of the more general form
\[
 A(\xi) =\xi_1^4 +4\,\gamma \,\xi_1^3\,\xi_2 +2\,\beta\,\xi_1^2\,\xi_2^2 + 4\,\gamma\, \xi_1\,\xi_2^3 +  \xi_2^4 \; , \qquad \xi\in\R^2.
\]
We focus on the non-strongly convex case and obtain
small-time asymptotic formulas for any choice of $\beta,\gamma\in\R$ for 
which the operator is elliptic. For each such 
choice there are two specific lines where the point
$x\in\R^2\setminus\{0\}$ must belong. This is due to extremal properties 
of the fundamental inequality
\[
{\rm Re}\, A(\xi +i\eta) \geq - k A(\eta) 
\, , \qquad \xi, \, \eta\in\R^2,
\]
where $k$ is the best possible constant. The extremal
points $\eta\in\R^2$, i.e. those for which there exists $\xi\in\R^2$
such that equality holds, lie on two prefered directions.
This differs from the strongly convex case where each $\eta\in\R^2$ 
is extremal.
(Nevertheless, the fundamental inequalities obtained,
cf.  Lemmas \ref{lem:E1} and \ref{lem:E2},
are sufficient in order to establish sharp Gaussian estimates 
that are valid for all $x,x'\in\Omega$ (and $t>0$).) As it turns out,
the exponential constant 
$\sigma_{\beta,\gamma}$ in the resulting asymptotics 
depends in a rather involved way on the
parameters $\beta,\gamma$. This is in contrast to the rigid character
of strong convexity where the exponential constant depends
only on the order.

Gaussian estimates for higher-order operators with bounded measurable
coefficients under Dirichlet boundary conditions were first 
obtained by Davies in \cite{Da95a} by adapting to the higher-order case
his well known exponential perturbation technique.
In the case of a fourth order operator in two dimensions and in the 
presence of lower order terms the estimate has the form
\be
|G(x,x',t)|\leq c_1 t^{-\frac{1}{2}}
\exp\Big(-c_2\frac{|x-x'|^{4/3}}{t^{1/3}}+c_3t\Big),
\la{gauss}
\ee
for some positive constants $c_1$, $c_2$, $c_3$ and all $t>0$
and $x,x'\in\Omega$.

A sharp estimate of the constant $c_2$ for perturbations of
the bilaplacian was established in \cite{BD}. Gaussian estimates
for general operators with strongly convex
symbols\footnote{In this context strong convexity means that
the matrix referred to earlier is positive semi-definite,
as opposed to positive definite which was assumed in \cite{EP1}.}
were obtained in \cite{B2001} where the inequality
\[
|G(x,x',t)|\leq c_{\epsilon} t^{-\frac{1}{2}}\exp\Big\{
-\big( \sigma_2 -D -\epsilon\big) \frac{d_M(x,x')^{4/3}}{t^{1/3}}+
c_{\epsilon,M}t\Big\},
\]
was established for arbitrary $\epsilon $ and $M$ positive. Here
$D\geq 0$ is a constant proportional to the distance in the $L^{\infty}$
norm of the coefficients from $W^{m-1,\infty}$,
$d_M(x,y)$, $M>0$, is a family of Finsler-type distances on $\Omega$ 
which increase with $M$ and converge as $M\to +\infty$ to a Finsler-type 
distance $d(x,y)$. Sharp Gaussian estimates without the
strong convexity assumption were obtained in \cite{BB}.

The literature on Gaussian bounds for higher-order 
operators is large; see, e.g, \cite{AQ,Da97,K,RS-C1}.
Gaussian estimates involving Schrödinger operators with singular 
potentials were obtained in \cite{B2017,CYZ,DDY}. For results on
non-Euclidean settings see \cite{AM,Du,tER,S-C,Str1,Str2}.
We also refer to the articles \cite{AN,LSZ}
for recent results where the exponential perturbation 
technique is extended to the case of time-dependent coefficients.

We close this introduction with one remark. As pointed out
in \cite{EP1}, in the special case of a fourth order operator in two 
dimensions strong convexity is equivalent to convexity, in the sense of 
the Hessian matrix being positive definite (except at the origin). 
Nevertheless we have chosen not to replace the term `strongly convex' by 
`convex' in order to emphasize the importance of strong convexity in the 
general case  of an operator of order $2m$ acting on $L^2(\R^d)$.



\section{The main setting}

We consider the homogeneous polynomial
\be
A(\xi)=\xo^4 + 4\gamma \,\xo^3 \xt + 2\beta \, \xo^2 \xt^2 +4\gamma \,
\xo \xt^3 + \xt^4, \qquad \xi\in\R^2.
\la{syl}
\ee
It is straightforward to see that the set of all $(\beta,\gamma)\in\R^2$ 
for which $A(\xi)$ is positive definite is
\[
 E=\Big\{   -1<\beta\leq 3 \mbox{ and } |\gamma|<\frac{\beta+1}{4} \Big\} \medcup  \Big\{ \beta\geq 3 \mbox{ and }
\gamma^2<\frac{\beta-1}{2}\Big\} \, ;
\]
hence we shall always assume that $(\beta,\gamma)\in E$.
We denote by $H$ the associated elliptic
operator, written formally as $H=A(i\nabla)$, and consider the
fourth-order parabolic problem
\[
\left\{
\begin{array}{ll}
u_t =-Hu , &  \quad x\in\R^2, \;\; t>0,  \\
u|_{t=0} =u_0 , & \quad x\in\R^2,
\end{array}
\right.
\]
where $u_0\in L^2(\R^2)$. 
The solution to this problem is represented by means of the
parabolic Green function given by
\[
G(x,t)=\frac{1}{(2\pi)^2}\int\limits_{\mathbb{R}^2} e^{i\xi \cdot x - A(\xi)t}\,d \xi, \qquad x\in \mathbb{R}^2, \; t>0 .
\]
With a simple change of variables we obtain
\be
G(x,t)= \frac{1}{(2\pi)^2 (4t)^{\nicefrac{2}{3}}}F(\lambda) \;
, \qquad \lambda=(4t)^{-\nicefrac{1}{3}},
\la{g:GF}
\ee
where
\be
F(\lambda)=\int\limits_{\mathbb{R}^2} e^{\lambda\big(i\xi \cdot x -\frac{1}{4} A(\xi)\big)}\,d \xi \, ;
\la{fl}
\ee
this is the integral to which the method of steepest descent will be applied
and for this we shall need to consider the holomorphic function
\be
\phi (z) = ix \cdot z - \frac{1}{4}A(z) = ix \cdot z - \frac{1}{4}
\Big(z_1^4+4 \gamma z_1^3z_2+2\beta z_1^2z_2^2+4 \gamma z_1z_2^3+z_2^4\Big)
\la{PHI}
\ee
of two complex variables $(z_1,z_2)\in\C^2$.

As mentioned, the natural way to express the resulting asymptotics is 
with respect to the Finsler-type norm \eqref{fin}.
We shall need the following lemma whose elementary proof is omitted; the dependence of the various functions on the paramaters
$\beta$ and $\gamma$ is here necessarily made explicit in the notation.
\begin{lemma}
Let $A_{\beta,\gamma}(\xi)$, $\xi\in\R^2$, be given by
\eqref{syl} and let $G_{\beta,\gamma}(\cdot,\cdot)$ and
$p_{\beta,\gamma}(\cdot)$ denote the associated parabolic Green function 
and Finsler-type norm respectively. Then the following symmetry relations
are valid:
\begin{align*}
\ia & \qquad G_{\beta,\gamma}(x_1 , -x_2 ,t) = G_{\beta,-\gamma}(x_1 , x_2 , t) \; , 
\qquad  x\in  \R^2 , \ t>0, \\
\ib & \qquad p_{\beta,\gamma}(x_1 , -x_2 ) = p_{\beta,-\gamma}(x_1 , x_2 ) \; , 
\qquad\qquad x\in\R^2.
\end{align*}
\la{lem:symmetry}
\end{lemma}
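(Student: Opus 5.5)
The plan is to derive both relations from one algebraic identity for the symbol under the reflection $R(\xi_1,\xi_2)=(\xi_1,-\xi_2)$. In the symbol \eqref{syl} the monomials $\xo^4$, $\xo^2\xt^2$ and $\xt^4$ are even in $\xt$, while $\xo^3\xt$ and $\xo\xt^3$ are odd in $\xt$ and both carry the coefficient $4\gamma$. Substituting $\xt\mapsto-\xt$ therefore only flips the sign of $\gamma$:
\[
A_{\beta,\gamma}(R\xi)=A_{\beta,-\gamma}(\xi),\qquad \xi\in\R^2 .
\]
The same identity shows that $(\beta,\gamma)\in E$ if and only if $(\beta,-\gamma)\in E$; this is also visible from $E$ depending on $\gamma$ only through $|\gamma|$ and $\gamma^2$. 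Hence both sides of \ia\ and \ib\ are defined. We also note that $R$ is orthogonal and an involution, so $R^{-1}=R$ and $Rx\cdot\xi=x\cdot R\xi$.

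For \ia, I would write out
\[
G_{\beta,\gamma}(Rx,t)=\frac{1}{(2\pi)^2}\int_{\R^2}e^{i\,Rx\cdot\xi-tA_{\beta,\gamma}(\xi)}\,d\xi .
\]
In this integral I would make the change of variables $\xi=R\zeta$, which is linear with $|\det R|=1$. Then $Rx\cdot R\zeta=x\cdot\zeta$, and by the symbol identity $A_{\beta,\gamma}(R\zeta)=A_{\beta,-\gamma}(\zeta)$. The integral therefore becomes exactly the defining integral of $G_{\beta,-\gamma}(x,t)$. The integral converges absolutely because $A_{\beta,\gamma}$ is positive definite and homogeneous of degree four, so $A_{\beta,\gamma}(\xi)\ge c|\xi|^4$. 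This justifies the substitution.

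For \ib, I would argue the same way with the maximum in \eqref{fin}, taking $m=2$. For each $\xi\neq0$,
\[
\frac{Rx\cdot\xi}{A_{\beta,\gamma}(\xi)^{1/4}}=\frac{x\cdot R\xi}{A_{\beta,-\gamma}(R\xi)^{1/4}} .
\]
As $\xi$ ranges over $\R^2\setminus\{0\}$, so does $\eta=R\xi$. Taking the maximum over $\xi$ on the left is therefore the same as taking the maximum over $\eta$ on the right, which gives $p_{\beta,\gamma}(Rx)=p_{\beta,-\gamma}(x)$.

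I do not expect a genuine obstacle here. The only point that needs care is the symbol identity together with the bookkeeping that $R$ is self-adjoint and unimodular. Once those are in place, \ia\ and \ib\ follow by the same substitution.
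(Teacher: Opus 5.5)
Your proof is correct and follows the paper's argument: the paper also derives both parts directly from the identity $A_{\beta,\gamma}(\xi_1,-\xi_2)=A_{\beta,-\gamma}(\xi_1,\xi_2)$. Your change of variables $\xi=R\zeta$, in the Fourier integral and in the maximum defining $p$, simply writes out the details the paper leaves implicit.
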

\proof Both parts follow directly from the relation
$A_{\beta,\gamma}(\xi_1 , -\xi_2) = A_{\beta,-\gamma}(\xi_1 , \xi_2)$, $\xi=(\xi_1,\xi_2)\in \R^2$.
\endproof

\subsection{Strong convexity}

The notion of strong convexity was introduced in \cite{EP1}
and applies to
a general elliptic symbol of order $2m$ acting on functions on
$\R^d$. We do not present here the general definition and simply
note that the symbol \eqref{syl} is
strongly convex if and only if the matrix
\be
M=
\left[
\begin{array}{ccc}
1  & \gamma  & \frac{\beta}{3} \\
\gamma & \frac{\beta}{3}  &  \gamma \\
\frac{\beta}{3}  &  \gamma & 1 
\end{array}
\right]
\la{g:M}
\ee
is positive definite. It is easily seen that this is the case
if and only if
\[
 0<\beta<3 \qquad  \mbox{ and } \qquad 
 \gamma^2 <\frac{\beta^2+3\beta}{18}.
\]
Our asymptotic formulas depend in a crucial way on where the point $(\beta,\gamma)\in E$ lies. For this purpose we define three open subsets of $E$:
\begin{align*}
E_0 \; = \; & \{ (\beta,\gamma) \in E \, :  \; 
 0<\beta<3 \quad  \mbox{ and } \quad \gamma^2 <\frac{\beta^2+3\beta}{18}
\} \\
E_1 \; = \; & \{ (\beta,\gamma) \in E \, :  \;  -1<\beta<3 \quad  \mbox{ and } \quad \gamma^2 > \frac{\beta^2+3\beta}{18} \} \\
E_2 \; = \; & \{ (\beta,\gamma) \in E \, :  \; \beta>3 \} 
\end{align*}
Defining also the curves
\begin{align*}
L_1 \; = \; & \{ (\beta,\gamma) \in E \, :  \;  0\leq \beta<3 \quad  \mbox{ and } \quad \gamma^2 = \frac{\beta^2+3\beta}{18} \} \\
L_2 \; = \; & \{ (\beta,\gamma) \in E \, :  \; \beta=3\},
\end{align*}
we then have the disjoint union
\[
E= E_0 \cup E_1 \cup E_2 \cup L_1 \cup L_2 \, .
\]


\begin{figure}[htbp]
\centering
\resizebox{\textwidth}{!}{%
\begin{tikzpicture}
\begin{axis}[
    axis on top,
    xlabel={\large $\beta$},
    ylabel={\large $\gamma$},
    axis lines=middle,
    xmin=-1.2, xmax=3.9,
    ymin=-1.3, ymax=1.3,
    xtick={},
    ytick={},
        extra x ticks={},
    extra x tick labels={\raisebox{-2pt}{\hspace{-4pt}\tiny0}},
    grid=none,
    grid style={line width=.1pt, draw=gray!30},
    major grid style={line width=.2pt, draw=gray!50},
    width=15cm, height=8.5cm, 
    tick label style={font=\tiny, fill=none, inner sep=1pt},
    every axis x label/.style={at={(current axis.right of origin)}, anchor=west, xshift=5pt},
    every axis y label/.style={at={(current axis.above origin)}, anchor=south, yshift=5pt},
    samples=200
]

    \addplot [name path=blue_top, domain=-1:3, forget plot, opacity=0] {(x+1)/4};
    \addplot [name path=blue_bottom, domain=-1:3, forget plot, opacity=0] {-(x+1)/4};

    \addplot [name path=green_top, domain=0:3, samples=150, forget plot, opacity=0] {sqrt((x^2+3*x)/18)};
    \addplot [name path=green_bottom, domain=0:3, samples=150, forget plot, opacity=0] {-sqrt((x^2+3*x)/18)};

    \addplot [name path=red_top, domain=3:3.9, samples=100, forget plot, opacity=0] {sqrt((x-1)/2)};
    \addplot [name path=red_bottom, domain=3:3.9, samples=100, forget plot, opacity=0] {-sqrt((x-1)/2)};

    \addplot [black!10] fill between [of=blue_top and blue_bottom, domain=-1:0];
    \addplot [black!10] fill between [of=blue_top and green_top, domain=0:3];
    \addplot [black!10] fill between [of=green_bottom and blue_bottom, domain=0:3];
    \addplot [black!45] fill between [of=green_top and green_bottom, domain=0:3];
    \addplot [black!25] fill between [of=red_top and red_bottom, domain=3:3.9];

    \addplot [domain=0:3, samples=150, thick, black!80!black] {sqrt((x^2+3*x)/18)};
    \addplot [domain=0:3, samples=150, thick, black!80!black] {-sqrt((x^2+3*x)/18)};
    \addplot [domain=-1:3, thick, black!80!black] {(x+1)/4};
    \addplot [domain=-1:3, thick, black!80!black] {-(x+1)/4};
    \addplot [domain=3:3.9, samples=100, thick, black!80!black] {sqrt((x-1)/2)};
    \addplot [domain=3:3.9, samples=100, thick, black!80!black] {-sqrt((x-1)/2)};
    \draw[thick, black] (axis cs:3,-1) -- (axis cs:3,1);

    \node[font=\footnotesize, color=black!80!black] at
    (axis cs:-0.17,0.1) {\large $E_1$};
    \node[font=\footnotesize, color=black!80!black!60!black] at
    (axis cs:1.8,0.2) {\large $E_0$};
    \node[font=\footnotesize, color=black!80!black] at (axis cs:3.45,0.4) {\large $E_2$};
    \node[font=\footnotesize, color=black!80!black] at (axis cs:1,0.36) {\large $L_1$};
    \node[font=\footnotesize, color=black!80!black] at (axis cs:3.12,0.15) {\large $L_2$};

    \node[circle, draw=black, fill=white, thick, inner sep=1.5pt, label={[font=\tiny, xshift=-12pt, yshift=1pt]above:{\large (3, 1)}}] at (axis cs:3,1) {};
    \node[circle, draw=black, fill=white, thick, inner sep=1.5pt, label={[font=\tiny, xshift=-14pt, yshift=-20pt]above:{\large (3, -1)}}] at (axis cs:3,-1) {};

\end{axis}
\end{tikzpicture}%
}
\caption{The sets $E_0, E_1, E_2, L_1, L_2$.}
\end{figure}


We shall present four theorems, each one corresponding to
$(\beta,\gamma)$ belonging to one of the sets $E_1$,
$E_2$, $L_1$ and $L_2$.
The strongly convex case $(\beta,\gamma)\in E_0$ is contained in
\cite{EP1} but the following simple lemma will be needed in order
to deal with the borderline cases $(\beta,\gamma)\in L_j$, $j=1,2$.
\begin{lemma}
\la{lem:sc}
Assume that $(\beta,\gamma)\in E_0$. Then
\[
 {\rm Re} \, A(\xi + i \eta )
 \geq -8 A(\eta), \quad \xi , \eta \in \mathbb{R}^2.
\]
Moreover the constant 8 is sharp and equality holds if and only if
$\xi=\pm \sqrt{3}\eta$.
\end{lemma}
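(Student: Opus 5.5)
The plan is to write $\mathrm{Re}\,A(\xi+i\eta)+8A(\eta)$ as a quadratic form evaluated at the symmetric matrix $\xi\otimes\xi-3\,\eta\otimes\eta$. The positive definiteness of $M$ in \eqref{g:M}, which is exactly the hypothesis $(\beta,\gamma)\in E_0$, then gives both the inequality and the equality case.

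Let $T$ be the symmetric $4$-linear form on $\R^2$ with $T(\xi,\xi,\xi,\xi)=A(\xi)$. Its components are
\[
T_{1111}=T_{2222}=1,\qquad T_{1112}=T_{1222}=\gamma,\qquad T_{1122}=\frac{\beta}{3}.
\]
Expanding $A(\xi+i\eta)=T(\xi+i\eta,\dots,\xi+i\eta)$ by multilinearity and keeping only the even powers of $i$, we get
\[
\mathrm{Re}\,A(\xi+i\eta)=A(\xi)-6\,T(\xi,\xi,\eta,\eta)+A(\eta).
\]
Hence
\[
\mathrm{Re}\,A(\xi+i\eta)+8A(\eta)=A(\xi)-6\,T(\xi,\xi,\eta,\eta)+9A(\eta).
\]
Now regard $T$ as a symmetric bilinear form $\mathcal T(S,S')=\sum T_{ijkl}S_{ij}S'_{kl}$ on symmetric $2\times 2$ matrices, and set $Q(S)=\mathcal T(S,S)$. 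Then $\mathcal T(\xi\otimes\xi,\eta\otimes\eta)=T(\xi,\xi,\eta,\eta)$ and $Q(\xi\otimes\xi)=A(\xi)$. Expanding the square gives the key identity
\[
\mathrm{Re}\,A(\xi+i\eta)+8A(\eta)=Q\big(\xi\otimes\xi-3\,\eta\otimes\eta\big).
\]

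Next we compute $Q$ in the coordinates $w=(s_{11},s_{12},s_{22})$ of $S$, remembering that $s_{12}$ occurs twice. This gives
\[
Q(S)=s_{11}^2+4\gamma s_{11}s_{12}+\tfrac{2\beta}{3}s_{11}s_{22}+\tfrac{4\beta}{3}s_{12}^2+4\gamma s_{12}s_{22}+s_{22}^2 = w^{T}DMD\,w,
\]
where $D={\rm diag}(1,2,1)$ and $M$ is the matrix in \eqref{g:M}. Since $(\beta,\gamma)\in E_0$, $M$ is positive definite, and therefore so is $DMD$. Thus $Q(S)\ge 0$, with equality only if $S=0$. This proves the inequality.

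For the equality case, $Q(S)=0$ forces $\xi\otimes\xi=3\,\eta\otimes\eta$. Comparing these rank-one matrices gives $\xi=\pm\sqrt3\,\eta$, and conversely such $\xi$ clearly give equality.

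Sharpness follows at once. For any $\eta\neq 0$, taking $\xi=\sqrt3\,\eta$ gives equality while $A(\eta)>0$, so no constant smaller than $8$ can work.

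The only step requiring care is the bookkeeping in the key identity and in the computation of $Q$. One must track the multinomial factors and the doubled off-diagonal entry, which is what produces the factor $D$. A slip there would mismatch the coefficients with $M$. I would double-check the identity on the special case $\gamma=0$, $\beta=1$, where $A(\xi)=|\xi|^4$ and both sides can be computed by hand.
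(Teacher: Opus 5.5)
Your proposal is correct and is essentially the paper's proof: for $S=\xi\otimes\xi-3\,\eta\otimes\eta$, your vector $Dw$ is exactly the paper's $Y(\xi,\eta)$ from \eqref{g:Y}, so your key identity is the paper's \eqref{g:be}, and the inequality, the equality case and sharpness then follow from the positive definiteness of $M$ just as you argue. The only difference is that you derive the identity through the symmetric $4$-tensor rather than by direct expansion, which cleanly justifies the ``direct computation'' the paper leaves to the reader.
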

\proof  Given $\xi,\eta\in\R^2$ let
\be
Y(\xi,\eta) = [\, \xi_1^2-3\eta_1^2 \,  , \;  
2(\xi_1\xi_2 -3\eta_1\eta_2)\,  , \;  \xi_2^2-3\eta_2^2 \, ]^{T}.
\la{g:Y}
\ee
A direct computation shows that
\be
{\rm Re}\,  A(\xi+i\eta) +8A(\eta) =Y(\xi,\eta)^T M \,  Y(\xi,\eta). 
\la{g:be}
\ee
Since $M$ is positive definite \eqref{g:be} follows and equality
holds if and only if $Y(\xi,\eta)=0$; equivalently, if and only if
$\xi=\pm \sqrt{3}\,\eta$.
\endproof


\section{The case $(\beta,\gamma) \in E_1$}
\la{sec:E1}


We recall that $(\beta,\gamma) \in E_1$ if and only if
\[
 -1 < \beta<3 \quad  \mbox{ and } \quad   \frac{\beta^2+3\beta}{18}
 < \gamma^2 < \frac{(\beta+1)^2}{16}.
\]

\begin{lemma}
\label{lem:E1}
Assume that $(\beta ,\gamma)\in E_1$. Then
\be
 {\rm Re} \, A(\xi +\textit{i} \eta )
 \geq -\kg A(\eta), \qquad \xi , \eta \in \mathbb{R}^2, 
\la{g:sy}
\ee
where
\[
\kg=\frac{8(1-\beta)+16\gamma^2}{(\beta +1)^2-16\gamma^2}
\]
Moreover the constant is sharp and equality holds 
precisely in the following cases:
\begin{align*}
& \hspace{2cm} \eta =(s,s), \qquad\;\; \xi = 
\pm \Big(\frac{3-\beta}{\beta+1-4\gamma}\Big)^{\nicefrac12}
(s ,-s )\\
& \hspace{1cm} \mbox{ or } \;\;\;\; \eta =(s,-s), \qquad
 \xi = \pm \Big(\frac{3-\beta}{\beta+1+4\gamma}\Big)^{\nicefrac12}
(s,s ) \\
\end{align*}
\end{lemma}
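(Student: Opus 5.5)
The plan is to exploit the symmetry $A(\xi_1,\xi_2)=A(\xi_2,\xi_1)$ and pass to the rotated coordinates $u=(\xi_1+\xi_2)/\sqrt2$, $v=(\xi_1-\xi_2)/\sqrt2$. Using $\xi_1^4+\xi_2^4=\frac12(u^4+6u^2v^2+v^4)$, $\xi_1^2\xi_2^2=\frac14(u^2-v^2)^2$ and $\xi_1\xi_2(\xi_1^2+\xi_2^2)=\frac12(u^4-v^4)$, the symbol becomes
\[
A=a\,u^4+2b\,u^2v^2+c\,v^4,\qquad a=\tfrac{\beta+1+4\gamma}{2},\quad b=\tfrac{3-\beta}{2},\quad c=\tfrac{\beta+1-4\gamma}{2}.
\]
On $E_1$ we have $a,b,c>0$, so this is a symbol of the type \eqref{bb} treated in \cite{BB}, except that it now carries unequal weights $a\neq c$. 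The directions $\eta=(s,s)$ and $\eta=(s,-s)$ correspond to the $u$-axis and the $v$-axis. Write $\xi\mapsto(u,v)$ and $\eta\mapsto(p,q)$. Then
\[
{\rm Re}\,A(\xi+i\eta)=a(u^4-6u^2p^2+p^4)+2b\big[(u^2-p^2)(v^2-q^2)-4upvq\big]+c(v^4-6v^2q^2+q^4).
\]

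First, I would determine the constant from the extremal directions. Take $q=0$ (so $\eta$ lies along $(s,s)$). Then ${\rm Re}\,A$ is a quadratic in $(u^2,v^2)$. Minimizing it over $\xi$ should give the minimizer $u=0$, $v^2=bp^2/c$, and the value $(a-b^2/c)\,p^4$. Hence the ratio to $A(\eta)=a p^4$ is $-(b^2-ac)/(ac)$. One checks that $(b^2-ac)/(ac)=\kbg$: indeed $4ac=(\beta+1)^2-16\gamma^2$ and $4(b^2-ac)=(3-\beta)^2-(\beta+1)^2+16\gamma^2=8(1-\beta)+16\gamma^2$. Translating back, $v^2/p^2=b/c=(3-\beta)/(\beta+1-4\gamma)$, which is exactly the stated equality case. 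The case $p=0$ is symmetric, with $a$ and $c$ interchanged; it yields the same constant $\kbg$, since that expression is symmetric in $a$ and $c$. In the sequel I write $k=\kbg$. The hypothesis $\gamma^2>(\beta^2+3\beta)/18$ should be precisely the condition that the competing strongly-convex value $8$ is smaller than $k$. This confirms sharpness and identifies $k$.

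Second, I would prove the inequality ${\rm Re}\,A+kA(\eta)\ge0$ for all $(u,v,p,q)$ by a sum-of-squares certificate, in the spirit of \eqref{g:be}. Guided by the equality cases, I would look for an identity of the form
\[
{\rm Re}\,A(\xi+i\eta)+kA(\eta)=Z^TNZ+R,
\]
with $Z$ a vector built from $u^2,v^2,uv$ corrected by multiples of $p^2,q^2,pq$. The corrections are chosen so that $Z$ vanishes at $(u,v,p,q)=(0,\sqrt{b/c}\,p,p,0)$ and at the symmetric configuration $(\sqrt{b/a}\,q,0,0,q)$. Here $N$ is positive semidefinite, and $R$ is a manifestly nonnegative remainder, for instance a multiple of $(uq+vp)^2$ or of $p^2q^2$ times a positive constant. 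If a clean certificate does not emerge, the fallback is direct minimization. For fixed $\eta$, the function of $\xi$ has critical points given by a cubic system. Using homogeneity one can set $p^2+q^2=1$, $p=\cos\theta$, $q=\sin\theta$, and show that $\min_\xi{\rm Re}\,A/A(\eta)$, as a function of $\theta$, is smallest exactly at $\theta\in\{0,\pi/2\}$ (mod $\pi$).

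Third, the characterization of equality follows from the certificate: equality forces $Z=0$ and $R=0$, and these conditions should reduce to $pq=0$ together with the stated relation between $\xi$ and $\eta$.

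The main obstacle is the second step, namely finding the positive semidefinite matrix $N$ and the remainder $R$ that certify the bound for mixed $\eta$, with both $p$ and $q$ nonzero. The cross term $-8b\,upvq$ couples the two directions, and one must use $k>8$, i.e. the $E_1$ condition, to absorb it.
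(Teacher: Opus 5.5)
Your first step is correct and is a nice addition to what the paper does explicitly. In the rotated coordinates $A=au^4+2bu^2v^2+cv^4$ with $a,b,c>0$ on $E_1$. For $q=0$, minimizing over $(u^2,v^2)$ in the closed quadrant puts the minimum on an edge, because $b^2>ac$ makes the quadratic form in $(u^2,v^2)$ indefinite. The two edge values are $(a-b^2/c)p^4=-k_{\beta,\gamma}A(\eta)$ and $-8ap^4=-8A(\eta)$, and $k_{\beta,\gamma}>8$ is exactly $\gamma^2>(\beta^2+3\beta)/18$. This gives sharpness and the equality set for $\eta$ on the two diagonals.

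But this is the easy half. The real content of the lemma is \eqref{g:sy} for $\eta$ with $pq\neq 0$, with strict inequality there, and that is not proved. Your second step only describes the shape of a certificate you would look for. The fallback, showing that $\theta\mapsto\min_\xi {\rm Re}\,A/A(\eta)$ is minimized at $\theta\in\{0,\pi/2\}$, is a restatement of the lemma, not an argument. Your third step therefore has nothing to rest on. It would also need $N$ positive \emph{definite}, not merely semidefinite, to conclude $Z=0$.

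The missing ingredient is the explicit Gram matrix used in the paper. Write ${\rm Re}\,A(\xi+i\eta)+k_{\beta,\gamma}A(\eta)=X(\xi)^TQ(\eta)X(\xi)$ with $X(\xi)=[\xi_1^2,\xi_1\xi_2,\xi_2^2,1]^T$. The last column of $Q(\eta)$ is forced by the mixed $\xi$--$\eta$ terms. The upper-left $3\times 3$ block has one free parameter, since the coefficient $2\beta$ of $\xi_1^2\xi_2^2$ can be split arbitrarily between twice the $(1,3)$ entry and the $(2,2)$ entry. Any positive semidefinite certificate must contain the equality vector $X(\xi)$, $\xi=(b/c)^{1/2}(1,-1)$, in its kernel at $\eta=(1,1)$. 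The first row of that kernel condition forces the $(1,3)$ entry to be $(\beta^2+5\beta-24\gamma^2)/(3-\beta)$.

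With this choice one checks two things. First, the leading minors $D_1,D_2,D_3$ are positive on $E_1$; this is where $\gamma^2>(\beta^2+3\beta)/18$ enters, via $(\beta-3)^2\ge 0$. Second,
\[
\det Q(\eta)=\frac{32(2\gamma^2-\beta+1)(\beta^2+3\beta-18\gamma^2)^2}{(3-\beta)^2}\,(\eta_1^2-\eta_2^2)^2\ \ge 0 .
\]
Your ansatz is exactly this certificate after completing the square. Take $N$ to be the upper-left block and $\ell(\eta)$ the first three entries of the last column. Then $Z=[\xi_1^2,\xi_1\xi_2,\xi_2^2]^T+N^{-1}\ell(\eta)$, and $R=D_4/D_3$ is a positive multiple of $(\eta_1^2-\eta_2^2)^2=4p^2q^2$. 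So you guessed the right form. But until you produce $N$, prove $N\succ 0$ on $E_1$, and compute the Schur complement, the inequality for mixed $\eta$ and the exclusion of further equality cases remain unproved.
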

\proof
Given $\eta\in\R^2$ we define the symmetric matrix
$$
Q(\eta)=
\begin{bmatrix}
    1   & 2\gamma   & \hspace{-.5cm} \dfrac{\beta^2+5\beta-24\gamma^2}{3-\beta} &  -3\eo^2-\beta\et^2-6\gamma\eo \et \\[0.1cm]
  * & -\dfrac{4(\beta^2+\beta-12\gamma^2)}{3-\beta} & 2\gamma &
 -6\gamma(\eo^2+\et^2)\!-\! 4\beta\eo \et \\[0.1cm]
* & * & 1 &   -\beta\eo^2-3\et^2-6\gamma\eo \et \\[0.1cm]
    *  &  *  & *  &  \dfrac{(\beta-3)^2 A(\eta)}{(\beta+1)^2-16\gamma^2}
\end{bmatrix}$$
where  the symbol $*$ indicates definition by symmetry.
For $\xi\in\R^2$ we also set
\[
X(\xi)=[ \xo^2  , \;  \xo \xt , \;  \xt^2 , \; 1]^{T}.
\]
With simple computations we can see that
\begin{align} 
 {\rm Re} A(\xi +\textit{i} \eta )&+\kg A(\eta) \nonumber \\
  = \;\; & \xo^4+4\gamma \xo^3 \xt + 2\beta\xo^2 \xt^2 + 4\gamma \xo \xt^3+\xt^4 - 
  2\xo^2(3\eo^2+\beta\et^2+6\gamma\eo \et) \nonumber \\
 & -2\xt^2(\beta\eo^2+3\et^2+6\gamma\eo \et) - 4\xo\xt(3\gamma\eo^2+3\gamma\et^2+2\beta\eo \et) \label{g:ho} \\
& +  \dfrac{(3-\beta)^2}{(1+\beta)^2-16\gamma^2}A(\eta)  \nonumber \\
    = \;\;  &  X(\xi)^T Q(\eta)X(\xi) .  \nonumber
\end{align}
We consider the principal minors
$Q_k=Q_k(\eta)$, $k=1,2,3,4$, obtained from
$Q(\eta)$ starting from the top left corner, so that $Q_1=[1]$
and $Q_4=Q(\eta)$. The respective determinants are
\[
\hspace{-2.6cm}D_1 = 1 \;\; , \qquad\qquad D_2 = \dfrac{4(9\gamma^2-\beta^2+\beta\gamma^2-\beta)}{3-\beta}
\]
\[
D_3 = \dfrac{4(\beta^2+6\beta-3-24\gamma^2)(\beta^2+3\beta-18\gamma^2)\big( \,  (\beta+1)^2-16\gamma^2\big)}{(3-\beta)^3} \]
\[
\hspace{-1.9cm}D_4 = \dfrac{32(2\gamma^2-\beta+1)(\beta^2+3\beta-18\gamma^2)^2(\eo^2-\et^2)^2}{(3-\beta)^2}.
\]
\noindent It is straightforward to see that $D_1$, $D_2$ and $D_3$ are positive.
Moreover if $\eta_1^2 \neq \eta_2^2$ then $D_4>0$, hence
$X(\xi)^TQ(\eta)X(\xi) >0$ by Sylvester's criterion. It follows by continuity
that if $\eta_1^2 = \eta_2^2$ then $X(\xi)^TQ(\eta)X(\xi) \geq 0$.
Hence \eqref{g:sy} has been proved and it remains to
establish when  equality is valid.

Assume that equality holds in \eqref{g:sy}.
It follows by the preceding considerations that $\eta_1^2 = \eta_2^2$. Assume first that $\eta_1=\eta_2$ and, additionally and without loss of generality, that $\eta_1=\eta_2=1$. Then $D_4=0$ and $Q(\eta)$ has
a non-trivial kernel which is spanned by the vector
\[
v_1 =  [3-\beta \; , \; \beta - 3 \; , \; 3-\beta \; , \; \beta +1 -4\gamma]^T.
\]
Moreover the positive number $(24\gamma^2-\beta^2-6\beta+3)/(3-\beta)$
is an eigenvalue of $Q(\eta)$ with eigenvector $v_2 = [-1 , 0 , 1 , 0]^T$.
To consider the action of $Q$ on the remaining directions we
define the vectors
\[
v_3 = [1, \; 2, \; 1, \; 0]^T \;\; , \quad v_4= [4\gamma-\beta -1, \; \beta +1 -4\gamma, \; 4\gamma-\beta -1, \;3(3-\beta)]^T
\]
which are orthogonal to $v_1$ and $v_2$ and also to each other.
We then compute
\begin{align*}
& \left[
\begin{array}{cc}
Qv_3\cdot v_3 & Qv_4 \cdot v_3  \\
Qv_4\cdot v_3 & Qv_4 \cdot v_4
\end{array}
\right]  \\
& =
\left[
\begin{array}{cc}
2\, \frac{72\gamma^2-7\beta^2-4\beta+8(3-\beta)\gamma+3}{3-\beta}  & \frac{8\, (5\beta+18\gamma+3)\big( (\beta-\gamma-2)^2 +3(\gamma-1)^2   \big) }{\beta-3} \\[0.2cm]
\frac{8\, (5\beta+18\gamma+3)\big( (\beta-\gamma-2)^2 +3(\gamma-1)^2   \big)}
{\beta-3} & \frac{32 \, \big((\beta-\gamma-2)^2 +3(\gamma-1)^2 \big)}{\beta+1-4\gamma}
\end{array}
\right].
\end{align*}
Clearly $Qv_4 \cdot v_4>0$. Moreover the determinant of the above
matrix is equal to 
\[
 \frac{ 1152(\beta+1+4\gamma)(18\gamma^2-\beta^2-3\beta)
 \big((\beta-\gamma-2)^2 +3(\gamma-1)^2\big)^2}{(3-\beta)^2(\beta+1-4\gamma)} 
\]
and is also positive. Hence also $Qv_3 \cdot v_3>0$
and we conclude that $ X(\xi)^TQ(\eta)X(\xi)>0$
unless $X(\xi)$ is parallel to $v_1$, i.e. unless
\[
\xi_1^2 =\dfrac{3-\beta}{\beta+1-4\gamma}  \;, \qquad  \xi_1\xi_2 =-\dfrac{3-\beta}{\beta+1-4\gamma}
\;, \qquad \xi_1^2 =\dfrac{3-\beta}{\beta+1-4\gamma} ,
\]
or, equivalently,
$$
\xi = \pm \left( \Big(\dfrac{3-\beta}{\beta+1-4\gamma}
\Big)^{\nicefrac12} ,
-\Big(\dfrac{3-\beta}{\beta+1-4\gamma}\Big)^{\nicefrac12} \right)
$$
as required. 

Assume now that $\eta_1=-\eta_2$ and, without loss of generality, that $\eta_1=1$.
Expression \eqref{g:ho} for
${\rm Re} A(\xi +\textit{i} \eta )+\kg A(\eta)$
remains the same if we replace
$\xi_2,\eta_2$ and $\gamma$ by $-\xi_2,-\eta_2$ and $-\gamma$
respectively. Since the range of $\gamma$ under consideration
is symmetric, we conclude from the earlier discussion
that ${\rm Re} A(\xi +\textit{i} \eta )+\kg A(\eta)>0$ unless
$\xi_1^2=\xi_2^2=\xi_1\xi_2 =\frac{3-\beta}{\beta+1+4\gamma}$,
or, equivalently,
$\xi = \pm \Big( \big(\frac{3-\beta}{\beta+1+4\gamma}
\big)^{\nicefrac12} , \big(\frac{3-\beta}{\beta+1+4\gamma}
\big)^{\nicefrac12}\Big)$.
This concludes the proof. \endproof


For the next lemma we recall definition \eqref{fin}.
\begin{lemma}
\label{lem:3.2}
Assume that $(\beta,\gamma)\in E_1$.
The Finsler norms of the points $(1,\pm 1)$ are given by
\[
p(1,1) = \Big(\dfrac{8}{\beta+1+4\gamma}\Big)^{\nicefrac{1}{4}} \; , \qquad
p(1,-1) =\Big(\dfrac{8}{\beta+1-4\gamma}\Big)^{\nicefrac{1}{4}}.
\]
\begin{proof}
We shall prove that the above are true more generally for $-1<\beta<3$
and $|\gamma|<(\beta+1)/4$.
Let us denote $e_{t} =(\cos t , \sin t)$.
Writing a given $x\in\R^2$ as $x=|x|e_{\phi}$ we have
\[
p(x) = \max_{t\in [0,2\pi]} \frac{ x\cdot e_{t}}
{ A(e_{t})^{\nicefrac{1}{4}}} 
= |x| \max_{t\in [0,2\pi]} \frac{ \cos(t -\phi)}{ 
\Big(1 + 2\gamma \sin(2t)+ \dfrac{\beta -1}{2} \sin^2(2t)
\Big)^{\nicefrac{1}{4}}}.
\]
In particular
\[
p(1,1) =\sqrt{2} \max_{t\in [0,2\pi]} \frac{ \cos(t -
\frac{\pi}{4})}{ \Big(1 + 2\gamma \sin(2t)+ \dfrac{\beta -1}{2} \sin^2(2t)\Big)^{\nicefrac{1}{4}}}.
\]
To complete the proof it suffices to prove that the maximum is attained
at $t=\pi/4$. Equivalently, setting $\omega= t -\frac{\pi}{4}$,
we need to prove that
\[
\frac{\sqrt{2}\, \cos\omega}{\Big(
1+2\gamma \cos(2\omega) + \dfrac{\beta -1}{2} \cos^2(2\omega)
\Big)^{\nicefrac{1}{4}}}
\leq \Big(\dfrac{8}{\beta+1+4\gamma}\Big)^{\nicefrac{1}{4}} \; , \qquad 0\leq\omega\leq 2\pi .
\]
This will follow if we show that
\[
(\beta+1+4\gamma)\cos^4\omega \leq 
2+  4\gamma \cos(2\omega) +
(\beta -1) \cos^2(2\omega)   \; , \qquad 0\leq\omega\leq 2\pi .
\]
Setting $t=\cos\omega$ and using simple trigonomerty this is written
\[
1+\beta-4\gamma +(4-4\beta+8\gamma)t^2 +(3\beta-3-4\gamma)t^4\geq 0 \; , 
\qquad  t\in [-1,1].
\]
The last inequality is linear in $\gamma$ and is easily seen to be true
at the endpoints $\gamma= \pm (\beta+1)/4$;
hence the result for the point $(1,1)$ follows.
The result for $(1,-1)$
is now an immediate consequence of Lemma \ref{lem:symmetry} (ii).
\end{proof}
\end{lemma}

We shall now state the main result of this section which describes
the asymptotic behavior as $t\to 0+$ of the Green function $G(x,t)$ when 
the point $x\in\R^2\setminus\{(0,0)\}$  lies on one of the main diagonals.
For this we will apply the method of steepest descent to the
function $F(\lambda)$, cf. \eqref{fl}.

We fix a point $x\in\R^2$ and consider the function $\phi(z)$
defined in \eqref{PHI}. We note that
\begin{align}
& \phi_{z_1}(z)=ix_1-(z_1^3+3\gamma z_1^2z_2+\beta z_1z_2^2+\gamma z_2^3) ,
\nonumber \\
& \phi_{z_2}(z)=ix_2-(\gamma z_1^3+\beta z_1^2z_2+3\gamma z_1z_2^2+z_2^3)
\la{g:fcr1}
\end{align}
and also
\begin{align}
& \phi_{z_1z_1}(z)=-3z_1^2-6\gamma z_1z_2-\beta z_2^2 , \quad
\phi_{z_2z_2}(z)=-\beta z_1^2-6\gamma z_1z_2-3z_2^2  \nonumber \\
& \phi_{z_1z_2}(z)=-3\gamma z_1^2-2 z_1z_2-3\gamma z_2^2.
\la{g:fcr2}
\end{align}

\

We recall (cf. \eqref{sig2}) that $\sigma_2=3 
\cdot 2^{-\nicefrac{11}{3}}$, the 
exponential constant in the strongly convex case.
\begin{theorem}\label{thm:E1}
Assume that $(\beta,\gamma)\in E_1$ and let $\sigma_{\beta,\gamma} = 
\Big(\frac{(\beta+1)^2-16\gamma^2}{2\gamma^2-\beta +1}\Big)^{\nicefrac{1}{3}}\sigma_2$.
For a point $x\in\R^2$ of the form $x=(s,\pm s)$, $s\neq 0$,
the following asymptotic formula is valid as $t \rightarrow 0+$:
\be
G(x,t) =
\frac{1}{4\pi \sqrt{3-\beta}\sqrt{\sigma_{\beta ,\gamma}}} \cdot 
\frac{1}{p(x)^{\nicefrac{2}{3}} \, t^{\nicefrac{1}{3}}}
\exp\Big\lbrace- \sigma_{\beta,\gamma} \dfrac{p(x)^{\nicefrac{4}{3}}}{t^{\nicefrac{1}{3}}} \Big\rbrace \big(  1+O(t^{\nicefrac13}) \big).
\la{g:asym}
\ee
\begin{proof} 
Recalling \eqref{g:GF} and \eqref{fl}, we need to consider
the asymptotic behavior as $\lambda\to +\infty$  of the function
\[
F(\lambda)=\int\limits_{\mathbb{R}^2} e^{\lambda \phi(z)}\,dz \, .
\]
{\em Case 1.} $x=(s,s)$.  Due to the scaling relation
\[
G(sy,t)= \frac{1}{s^2}G(y,\frac{t}{s^4})  \; , \qquad y\in\R^2, \;\;
s\in\R , \;\; t>0,
\]
it is enough to consider the case $x=(1,1)$. Hence the phase function
is
\[
\phi(z) = i(z_1+z_2) -\frac14
\big(z_1^4+4 \gamma z_1^3z_2+2\beta z_1^2z_2^2+4 \gamma z_1z_2^3+z_2^4\big).
\]
Let
\[
\xi_0=\frac{1}{2}\Big(\frac{\beta + 1-4\gamma}{2\gamma^2-\beta+1}\Big)^{\nicefrac{1}{3}}
\Big(\frac{3-\beta}{\beta +1-4\gamma}\Big)^{\nicefrac12}(1,-1), \qquad
\eta_0 = \frac{1}{2}\Big(\frac{\beta + 1-4\gamma}{2\gamma^2-\beta+1}\Big)^{\nicefrac{1}{3}}(1,1).
\]
Using \eqref{g:fcr1} it is straightforward to see that the points
\[
z_0^{\pm}=\pm\xi_0+i\eta_0
\]
are critical points of $\phi$. We shall prove that these are the points
that contribute to the asymptotic behavior of $F(\lambda)$.

By Cauchy's theorem we have
\be
F(\lambda)=\int\limits_{\mathbb{R}^2} e^{\lambda\phi(z)}\,dz = \int\limits_{\R^2 +i\eta_0} e^{\lambda\phi(z)}\,dz, \qquad \lambda>0.
\la{g:cauchy}
\ee
Moreover setting 
$S(R) = \{ z\in \R^2 +i\eta_0 : |z_1|^2 +|z_2|^2 =R^2\}$
we have
\be
\lim_{R\to +\infty} \max\{ {\rm Re}\,\phi(z) : \; z\in S(R)\} =-\infty
\la{g:cond2}
\ee
Now, it is easily seen that $\phi(z_0^+)=\phi(z_0^-)$.
We claim that
\be
{\rm Re}\, \phi(z) \leq {\rm Re}\,\phi(z_0^\pm), \qquad z\in \mathbb{R}^2+i\eta_0,
\la{g:cond3}
\ee
with equality precisely at the points $z_0^\pm$. 
The last inequality may be written equivalently as
\[
{\rm Re}\, A(\xi +i \eta_0) \geq {\rm Re}\,  
A(\pm \xi_0 +i\eta_0) \; , 
\quad \xi\in \R^2.
\]
Now, the points $(\pm\xi_0 ,\eta_0)$  satisfy the conditions
of Lemma \ref{lem:E1}, therefore ${\rm Re}\,  A(z_0^\pm)
=-\kbg A(\eta_0)$. Hence the required inequality follows from
Lemma \ref{lem:E1} and equality holds if and only $\xi=\pm \xi_0$.

So \eqref{g:cond3} has been proved. This together with equations \eqref{g:cauchy} and \eqref{g:cond2} are the conditions under which
\cite[Criterion 1, page 15]{EP1} can be applied;
see also \cite[Section 4.5]{F2}. The conclusion is
that the asymptotic behavior of $F(\lambda)$ as $\lambda\to +\infty$
is given as the sum of two contributions, one from $z_0^+$ and one from $z_0^-$. 
More precisely, we have
\[
F(\lambda) =   {\rm contr}(z_0^+)
\big(1+O(\frac{1}{\lambda} \, )\big) +   {\rm contr}(z_0^-)   
\big(1+O(\frac{1}{\lambda} \, )\big) 
\]
where the two contributions are,
cf. \cite[equation (3.6)]{EP1} or \cite[equation (1.61)]{F2},
\[
{\rm contr}(z_0^{\pm}) =  \frac{2\pi}{\lambda} \big( {\rm det} (\phi_{z_iz_j})\big|_{z=z_0^{\pm}} \big) ^{-1/2}
e^{\lambda\phi(z_0^{\pm})}.
\]
We have
\[
\phi(z_0^+) = \phi(z_0^-) = -\dfrac{3}{4}
\Big(\frac{\beta + 1-4\gamma}{2\gamma^2-\beta+1}\Big)^{\nicefrac{1}{3}}.
\]
Moreover, using \eqref{g:fcr2},
\[
\det(\phi_{z_iz_j})\mid_{z=z_0^+} = \det(\phi_{z_iz_j})\mid_{z=z_0^-}=3(3-\beta)\Big(\frac{\beta + 1-4\gamma}{2\gamma^2-\beta+1}\Big)^{\nicefrac{1}{3}}.
\]
Adding the two contributions and recalling from Lemma \ref{lem:3.2} that
\[
(\beta+1-4\gamma)^{\nicefrac{1}{3}} =2^{-1}\left[(\beta+1)^2-16\gamma^2\right]^{\nicefrac{1}{3}}p(1,1)^{\nicefrac{4}{3}}
\]
we obtain
\begin{eqnarray}
 &  & \hspace{-1.5cm} {\rm contr}(z_0^+)+ {\rm contr}(z_0^-)
\nonumber \\
& = & \frac{2\pi}{\lambda}\left[ \det(\phi_{z_iz_j})\mid_{z=z_0^+}\right]^{-\nicefrac{1}{2}} \exp\lbrace \lambda \phi(z_0^+)\rbrace 
\nonumber \\
&  & + \frac{2\pi}{\lambda}\left[ \det(\phi_{z_iz_j})\mid_{z=z_0^-}\right]^{-\nicefrac{1}{2}} \exp\lbrace \lambda \phi(z_0^-)\rbrace 
\nonumber \\
& =& \frac{4\pi}{\sqrt{3(3-\beta)}\lambda} 
\Big( \frac{2\gamma^2-\beta+1}{\beta + 1-4\gamma}\Big)^{\nicefrac{1}{6}} 
\exp \left\lbrace -\frac{3}{4}\Big( \frac{\beta + 1-4\gamma}{2\gamma^2-\beta+1}\Big)^{\nicefrac{1}{3}} \lambda \right\rbrace  \nonumber \\
&=&  \frac{2^{\nicefrac{5}{2}}\pi}{\sqrt{3(3-\beta)} \, \lambda}\left( \frac{2\gamma^2-\beta+1}{(\beta+1)^2-16\gamma^2}\right)^{\nicefrac{1}{6}}
p(1,1)^{-\nicefrac{2}{3}} \nonumber \\
& &\times \exp \left\lbrace - \dfrac{3}{8}
\left( \frac{(\beta+1)^2-16\gamma^2}{2\gamma^2-\beta+1}\right)^{\nicefrac{1}{3}} p(1,1)^{\nicefrac{4}{3}}\lambda \right\rbrace \la{g:stef}
\end{eqnarray}
Recalling \eqref{g:GF} we thus arrive at
\begin{align*}
G(x,t)  = & \frac{1}{4\pi\sqrt{3-\beta}} 
\Big( \frac{2\gamma^2-\beta+1}{(\beta + 1)^2-16\gamma^2}\Big)^{\nicefrac{1}{6}} p(1,1)^{-\nicefrac{2}{3}} t^{-\nicefrac{1}{3}} \\
& \qquad \times
\exp \bigg\lbrace -\frac{3}{2^{\nicefrac{11}{3}}}\left( \frac{(\beta + 1)^2-16\gamma^2}{2\gamma^2-\beta+1}\right)^{\nicefrac{1}{3}}\frac{ p(1,1)^{\nicefrac{4}{3}}}{t^{\nicefrac{1}{3}}} \bigg\rbrace \big(1+O(t^{\nicefrac13} \, )\big) 
\end{align*}
which is \eqref{g:asym} for $x=(1,1)$.

\smallskip

\noindent
{\em Case 2.} $x=(s,-s)$. We will use the symmetry of the symbol
to obtain the result from the previous case. In particular
we will use indices to indicate the dependence on $\beta$
and $\gamma$ as in Lemma \ref{lem:symmetry}.
Let us define the function
\[
F_{\beta,\gamma}(p,t)= \frac{1}{4\pi \sqrt{3-\beta}\sqrt{\sigma_{\beta ,\gamma}}}
\cdot 
\frac{1}{p^{\nicefrac{2}{3}} t^{\nicefrac{1}{3}}}
\exp\Big\lbrace - \sigma_{\beta,\gamma} \dfrac{p^{\nicefrac{4}{3}}}{t^{\nicefrac{1}{3}}} \Big\rbrace \; , \qquad p, \, t>0,
\]
so that \eqref{g:asym} can be written, in an self-evident notation, as
\[
G(x,t) \sim F_{\beta,\gamma} \big( p_{\beta,\gamma}(x) ,t \big)  \; , \qquad
\mbox{ as } t\to 0+.
\]
Noting that $F_{\beta,\gamma}=F_{\beta,-\gamma}$ and using Lemma \ref{lem:symmetry} 
we have 
\[
G_{\beta,\gamma}(s,-s,t)=   G_{\beta,-\gamma}(s,s,t) \sim
F_{\beta,-\gamma} (p_{\beta,-\gamma}(s ,s) ,t) 
=  F_{\beta,\gamma} (p_{\beta,\gamma}(s ,-s) ,t),
\]
as required. This completes the proof.
\end{proof}
\end{theorem}

\section{The case $(\beta,\gamma) \in L_1$}

This borderline case is different to the previous 
one as now there are four relevant saddle points for each of the 
values of $x$ that we consider. This leads to a qualitatively
different asymptotic behavior of the Green function.

We recall that $(\beta,\gamma) \in L_1$ if and only if
\[
 -1 < \beta<3 \quad  \mbox{ and } \quad \gamma^2 =
 \frac{\beta^2+3\beta}{18}.
\]

\begin{lemma}\label{lem:L1}
Assume that $(\beta ,\gamma)\in L_1$. Then
\be
 {\rm Re} \, A(\xi + i \eta )
 \geq - 8 A(\eta), \qquad \xi , \eta \in \mathbb{R}^2.
\la{g:sy1}
\ee
Moreover the constant is sharp and (i) in case $(\beta,\gamma)
\neq (0,0)$ equality holds if and only if
\begin{align*}
& \hspace{2cm} \eta =(s,s), \qquad \; \xi = 
\pm \Big(\frac{3-\beta}{\beta+1-4\gamma}\Big)^{\nicefrac12} ( s \,
, \; -s )\\
& \hspace{1.2cm} \mbox{ or } \;\; \eta =(s,-s), \qquad
 \xi = \pm \Big(\frac{3-\beta}{\beta+1+4\gamma}\Big)^{\nicefrac12}
 ( s \, , \; s)  \\
& \hspace{1.2cm} \mbox{ or } \;\;
\gamma (\xi_1^2-3\eta_1^2) =\gamma (\xi_2^2-3\eta_2^2) 
=-\frac{\beta}{3} (\xi_1\xi_2-3\eta_1\eta_2)
\end{align*}
while (ii) in case $(\beta,\gamma)= (0,0)$ equality holds if and only if
\[
\xi_1^2-3\eta_1^2 =\xi_2^2-3\eta_2^2 =0.
\]
\end{lemma}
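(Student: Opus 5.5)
The plan is to reuse the quadratic-form identity from the proof of Lemma~\ref{lem:sc}. The identity \eqref{g:be},
\[
{\rm Re}\, A(\xi+i\eta)+8A(\eta)=Y(\xi,\eta)^T M\, Y(\xi,\eta),
\]
with $Y$ as in \eqref{g:Y} and $M$ as in \eqref{g:M}, is a purely algebraic polynomial identity. It therefore holds for every $(\beta,\gamma)$, not only in $E_0$, and I will first note this explicitly. The inequality \eqref{g:sy1} then reduces to showing that $M$ is positive semidefinite when $(\beta,\gamma)\in L_1$.

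For positive semidefiniteness I will compute the kernel of $M$ directly. Put $w=(\beta,-6\gamma,\beta)^T$. The first and third components of $Mw$ both equal $\beta-6\gamma^2+\beta^2/3=(\beta^2+3\beta-18\gamma^2)/3$, which vanishes on $L_1$. The middle component is $\gamma\beta-2\beta\gamma+\gamma\beta=0$. So $w\in\ker M$, and $w\neq 0$ unless $\beta=\gamma=0$.

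The remaining eigenvalues are found on $w^\perp$.
\begin{itemize}
\item The vector $(1,0,-1)^T$ is an eigenvector with eigenvalue $1-\beta/3>0$.
\item The third eigenvalue then equals ${\rm tr}\,M-(1-\beta/3)=1+2\beta/3>0$, because $0\le\beta<3$ on $L_1$.
\item In the case $\beta=\gamma=0$ we have $M={\rm diag}(1,0,1)$ after moving the zero to the middle, and the kernel is spanned by $e_2$.
\end{itemize}
Alternatively, one may view $M$ as a limit of positive definite matrices from $E_0$ and then verify that the kernel is one-dimensional.

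Equality in \eqref{g:sy1} holds iff $Y(\xi,\eta)\in\ker M$. For $(\beta,\gamma)\neq(0,0)$ this means $Y=cw$ for some real $c$, that is, $Y_1=Y_3$ and $\gamma Y_1=-(\beta/3)(Y_2/2)$. In terms of $\xi,\eta$ this reads
\[
\gamma(\xi_1^2-3\eta_1^2)=\gamma(\xi_2^2-3\eta_2^2)=-\tfrac{\beta}{3}(\xi_1\xi_2-3\eta_1\eta_2).
\]
When $\gamma\neq0$ the two forms of the condition are equivalent. The case $\gamma=0$ forces $\beta=0$ on $L_1$. For $(\beta,\gamma)=(0,0)$ the kernel condition is $Y_1=Y_3=0$, which is statement~(ii).

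The step I expect to be the main obstacle is the bookkeeping in case~(i), namely checking that the two diagonal families listed (those inherited from Lemma~\ref{lem:E1}) do satisfy the kernel condition, so that the stated ``or'' list describes exactly the equality set. For $\eta=(s,s)$ and $\xi=a(s,-s)$ with $a^2=(3-\beta)/(\beta+1-4\gamma)$, we have $Y_1=Y_3=(a^2-3)s^2$ and $Y_2/2=-(a^2+3)s^2$. One must then verify $3\gamma(a^2-3)=\beta(a^2+3)$ using $18\gamma^2=\beta^2+3\beta$. This is a routine algebraic check, and the other diagonal follows by the symmetry $\xi_2,\eta_2,\gamma\mapsto-\xi_2,-\eta_2,-\gamma$ exactly as in Lemma~\ref{lem:E1}. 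Sharpness of the constant $8$ follows because equality is attained, for instance at these points or at $\xi=\pm\sqrt3\,\eta$, where $Y=0$.
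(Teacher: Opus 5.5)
Your proposal is correct and follows the paper's argument: the identity \eqref{g:be} together with the fact that, on $L_1$, $M$ is positive semidefinite with $\ker M$ spanned by $(\beta,-6\gamma,\beta)^T$. The only difference is that you check semidefiniteness by explicit eigenvalues, and check by direct algebra that the two diagonal families satisfy the kernel condition (which also shows they are redundant in the list), whereas the paper gets both facts by continuity from Lemmas \ref{lem:sc} and \ref{lem:E1}, using $L_1=\overline{E_1}\cap\overline{E_0}$.
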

\proof (i) Since $L_1=\overline{E_1}\cap \overline{E_0}$ (closures in the topology 
of  $E$), inequality \eqref{g:sy1} follows directly from
Lemma \ref{lem:sc}.
Moreover, from Lemmas \ref{lem:sc} and \ref{lem:E1} follows that if
any of the conditions in the statement is satisfied then equality is
valid in \eqref{g:sy1}.

We shall prove that there exist no other cases where equality holds.
So assume that we have equality for some $\xi,\eta\in\R^2$.
Relation \eqref{g:be} (with $M$ being given by \eqref{g:M})
is still valid. Now, however, $M$ has zero as a simple
eigenvalue. Hence if ${\rm Re}\,  A(\xi+i\eta) +8A(\eta)=0$
and $\xi\neq \pm \sqrt{3}\,\eta$, then necessarily
the vector $Y(\xi,\eta)$ defined in \eqref{g:Y} must be a non-zero
element of the kernel of $M$. The latter is spanned by
the vector $[ \beta , \; -6\gamma , \; \beta]^{T}$, hence the
third of the three relations above is valid.

The proof in case (ii) is similar and simpler and is omitted.
\endproof

\begin{theorem}
Assume that $(\beta,\gamma)\in L_1$. If the point
$x\in\R^2$ is 
of the form $x=(s,\pm s)$, $s\neq 0$, then the following asymptotic formula  is valid as $t \rightarrow 0^+$:
\begin{align*}
G(x,t)\; = \; &  \dfrac{1}{4\pi\sqrt{3-\beta}\sqrt{\sigma_2}}
\cdot 
\frac{1}{p(x)^{\nicefrac{2}{3}} \, t^{\nicefrac{1}{3}}}
\exp\Big\lbrace - \sigma_2 \dfrac{p(x)^{\nicefrac{4}{3}}}{t^{\nicefrac{1}{3}}}
\Big\rbrace  \\
& \qquad\qquad \times \; \left( 1+\cos 
\Big( \sqrt{3}\,\sigma_2 \dfrac{p(x)^{\nicefrac{4}{3}}}{t^{\nicefrac{1}{3}}}-\frac{\pi}{3}\Big)  + O(t^{\nicefrac13})\right).
\end{align*}
\begin{proof}

\medskip

We shall only consider the case $(\beta,\gamma)\neq (0,0)$; the
case $(\beta,\gamma)= (0,0)$ is much simpler.

{\em 1. Case $x=(s,s)$.} As in the proof of Theorem \ref{thm:E1},
we assume that $x=(1,1)$.
By the proof of Theorem \ref{thm:E1} and continuity we obtain that
the points $z_0^{\pm}=\pm\xi_0+i\eta_0$ where
\[
\xi_0=\frac{1}{2}\Big(\frac{\beta + 1-4\gamma}{2\gamma^2-\beta+1}\Big)^{\nicefrac{1}{3}}
\Big(\frac{3-\beta}{\beta +1-4\gamma}\Big)^{\nicefrac12}(1,-1), \qquad
\eta_0 = \frac{1}{2}\Big(\frac{\beta + 1-4\gamma}{2\gamma^2-\beta+1}\Big)^{\nicefrac{1}{3}}(1,1).
\]
are saddle points of $\phi$. It is easily seen that there are two
more saddle points on the plane $\R^2+ i\eta_0$, namely
$z_*^{\pm}=\pm\xi_*+i\eta_0$ where $\xi_*=\sqrt{3}\, \eta_0$.

Arguing as in the  proof of Theorem \ref{thm:E1}, in order to show that the 
points $z_0^\pm, \, z_*^\pm $ are precisely those that contribute to the 
asymptotic behavior, we must establish that
\[
{\rm Re}\, \phi(z) \leq {\rm Re}\, \phi(z_0^{\pm}) =
{\rm Re}\, \phi(z_*^{\pm}) \; , \quad z\in\R^2+i\eta_0,
\]
with equality exactly at the points $z=z_0^\pm, \, z_*^\pm $.
Equivalently, we must show that
\be
{\rm Re}\, A(\xi+i\eta_0) \geq {\rm Re}\, A(\xi_0^{\pm}+i\eta_0) =
{\rm Re}\, A(\xi_*^{\pm}+i\eta_0) \; , \quad \xi\in\R^2,
\la{gs}
\ee
with equality precisely at the points $\xi_0^\pm, \, \xi_*^\pm$.
For this we note that the
four points $(\pm\xi_0 ,\eta_0)$, $(\pm \xi_*,\eta_0)$ satisfy the 
conditions of Lemma \ref{lem:L1}, therefore
\[
 {\rm Re}\, A(\xi_0^{\pm}+i\eta_0) =
{\rm Re}\, A(\xi_*^{\pm}+i\eta_0) =- 8A(\eta_0).
\]
It now follows from Lemma \ref{lem:L1} that \eqref{gs} is
valid and that it is precisely at the points
$\xi=\xi_0^\pm, \, \xi_*^\pm$ that equality holds.

The sum of the contributions of $z_0^+$ and $z_0^-$ is obtained
by taking the limit $\gamma\to \sqrt{(\beta^2+3\beta)/18}$ in \eqref{g:stef}, so
\[
{\rm contr}(z_0^+) + {\rm contr}(z_0^-) =
\frac{2^{\nicefrac{5}{2}} \pi}{\sqrt{3(3-\beta)} \, \lambda} \, p(1,1)^{-\nicefrac{2}{3}}
\exp \Big\lbrace - \frac38 \, p(1,1)^{\nicefrac{4}{3}}\lambda\Big\rbrace.
\]
We also compute
\[
\phi(z_*^{\pm}) = -\dfrac{3}{2}
\bigg(\frac{9(\beta+1)-6\sqrt{2}\sqrt{\beta^2+3\beta}}{(\beta-3)^2}\bigg)^{\nicefrac{1}{3}}
e^{ \mp\nicefrac{\pi i}{3}} 
\]
and
\[
\det(\phi_{z_iz_j})\mid_{z=z_*^{\pm}} =
 3(3-\beta)\bigg(\frac{9(\beta+1)-6\sqrt{2}\sqrt{\beta^2+3\beta}}{(\beta-3)^2}\bigg)^{\nicefrac{1}{3}} 
 e^{\pm\nicefrac{2\pi i}{3}}.
\]
Hence
\begin{align*}
& {\rm contr}(z_*^+) + {\rm contr}(z_*^-) \\
 &= \frac{2\pi}{\lambda}
\Big(\det(\phi_{z_iz_j})\mid_{z=z_*^{+}}\Big)^{-\nicefrac{1}{2}}
e^{\lambda \phi(z_*^+)}  +\frac{2\pi}{\lambda}
\Big(\det(\phi_{z_iz_j})\mid_{z=z_*^{-}}\Big)^{-\nicefrac{1}{2}}
e^{\lambda \phi(z_*^-)} \\
&= \frac{4\pi}{\sqrt{6}\lambda}
\bigg(\frac{9(\beta+1)-6\sqrt{2}\sqrt{\beta^2+3\beta}}{(\beta-3)^2}
\bigg)^{-\nicefrac{1}{6}}\!\!\!\!\!\!\!\!
\exp\Big\{\! -\frac34 
\Big(\frac{9(\beta+1)-6\sqrt{2}\sqrt{\beta^2+3\beta}}{(\beta-3)^2}\Big)^{\nicefrac{1}{3}}\!\!\!\!\lambda\Big\} \\
&\times \cos\bigg[ \frac{3\sqrt{3}}{4}\Big(\frac{9(\beta+1)-6\sqrt{2}\sqrt{\beta^2+3\beta}}{(\beta-3)^2}\Big)^{\nicefrac{1}{3}}
\lambda -\frac{\pi}{3}\Big].
\end{align*}
By Lemma \ref{lem:3.2} and continuity we also have
\[
\bigg(\frac{9(\beta+1)-6\sqrt{2}\sqrt{\beta^2+3\beta}}{(\beta-3)^2}\bigg)^{\nicefrac{1}{3}}  =\frac12 \; p(1,1)^{\nicefrac{4}{3}}
\]
and we therefore obtain
\begin{align*}
& {\rm contr}(z_*^+)  + {\rm contr}(z_*^-) \\
&= \frac{2^{\nicefrac{5}{2}} \pi}{ \sqrt{3(3-\beta)} \, \lambda} p(1,1)^{-\nicefrac{2}{3}}
\exp \Big\lbrace - \frac38 \, p(1,1)^{\nicefrac{4}{3}}\lambda\Big\rbrace
\cos\Big( \frac{3\sqrt{3}}{8}p(1,1)^{\nicefrac{4}{3}}
\lambda -\frac{\pi}{3}\Big).
\end{align*}
Hence we arrive at
\begin{align*}
 F(\lambda) = & \; \frac{2^{\nicefrac{5}{2}} \pi}{\sqrt{3(3-\beta)} } p(1,1)^{-\nicefrac{2}{3}}\\
& \times  \frac{1}{\lambda}
\exp \Big\lbrace - \frac38 \, p(1,1)^{\nicefrac{4}{3}}\lambda\Big\rbrace 
 \bigg( \! 1 + \cos\Big( \frac{3\sqrt{3}}{8}\, 
p(1,1)^{\nicefrac{4}{3}} \lambda - \frac{\pi}{3} \Big) +O(\frac{1}{\lambda})\bigg).
\end{align*}
Applying \eqref{g:GF} concludes the proof.

\medskip

\noindent
{\em 2. Case $x=(s,-s)$.} This follows from the previous case
as in the proof of Theorem \ref{thm:E1}.
\end{proof}
\end{theorem}


\section{The case $(\beta,\gamma) \in E_2$}
\la{sec:E2}


We recall that $(\beta,\gamma) \in E_2$ if and only if
\be
\beta > 3 \quad  \mbox{ and } \quad   \gamma^2 <\frac{\beta-1}{2}.
\la{g:assum}
\ee
We set
\be
q=\sqrt{ (\beta+1)^2 -16\gamma^2} \; , \qquad\qquad \mu=\frac{ q+ \beta+1 }{4} \, ,
\vspace{0.2cm}
\la{q}
\ee
constants that will appear throughout what follows. We note that by \eqref{g:assum}
\[
(\beta+1)^2 -16\gamma^2 > (\beta+1)^2 -16 \cdot \frac{\beta-1}{2}
=(\beta -3)^2
\]
and therefore
\[
q - \beta +3 >0  .
\]
We also note that
\be
\mu^2-\gamma^2= \frac{ q(q+\beta+1)}{8}>0.
\la{g:beli}
\ee
We shall need the following simple lemma whose proof is omitted.
\begin{lemma}
Let $\eta =(\cos\theta ,\sin\theta)$
and $\xi=r \, (\sin\theta,\cos\theta)$.
Then
\begin{align*}
 A(\xi+ i\eta) =& \; \, (r^4+1)\Big(  1
+2\gamma \sin(2\theta) + \frac{\beta-1}{2}\sin^2(2\theta)\Big) \\
&\qquad - r^2\Big( 2\beta +12\gamma\sin(2\theta)
+ (\beta+3)\sin(2\theta)^2  \Big) \\
& +i \, \bigg\{
2(r^3-r)\Big( 2\gamma +(\beta+1)\sin(2\theta) + 2\gamma\sin^2(2\theta)
\Big)\bigg\}.
\end{align*}
\la{g:agg3}
\end{lemma}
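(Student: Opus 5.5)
Set $z=\xi+i\eta$ with $z_1=r\sin\theta+i\cos\theta$ and $z_2=r\cos\theta+i\sin\theta$. The plan is to expand $A(z)$ monomial by monomial and collect terms by powers of $r$. Throughout, write $c=\cos\theta$, $s=\sin\theta$ and $S=\sin(2\theta)=2sc$. Every product that appears is a symmetric combination of $s$ and $c$, so it can be rewritten through $s^2+c^2=1$, $sc=S/2$, $s^4+c^4=1-S^2/2$ and $s^2c^2=S^2/4$.

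First, the even powers of $r$ give the real part. The coefficient of $r^4$ is $A(\sin\theta,\cos\theta)$. Because of the symmetry $A(\xi_1,\xi_2)=A(\xi_2,\xi_1)$, this equals
\[
s^4+c^4+4\gamma sc(s^2+c^2)+2\beta s^2c^2=1+2\gamma S+\tfrac{\beta-1}{2}S^2 .
\]
The $r^0$ term is $A(i\eta)=i^4A(\eta)=A(\cos\theta,\sin\theta)$, which is the same expression; together these give the $(r^4+1)$ term. For the $r^2$ term, take the second-order Taylor coefficient in the direction $i\eta$, namely $\tfrac12 i^2\,\eta^T(\nabla^2A)(\xi/r)\,\eta$. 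Using the Hessian entries from \eqref{g:fcr2} (multiplied by $-4$, since $\phi=-A/4+\dots$), this is $-\tfrac12\cdot 4\,[\dots]$. After substituting $\xi/r=(s,c)$ and $\eta=(c,s)$, it should reduce to $-(2\beta+12\gamma S+(\beta+3)S^2)$. As a check, at $\theta=0$ the $r^2$ coefficient should be $-2\beta$, coming from $2\beta z_1^2z_2^2$ with $z_1=i$, $z_2=r$; this matches.

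Next, the odd powers of $r$ give the imaginary part. The $r^3$ coefficient is $i\,\nabla A(s,c)\cdot(c,s)$. Using \eqref{g:fcr1}, $\nabla A=4(\dots)$, and this should simplify to $2i(2\gamma+(\beta+1)S+2\gamma S^2)$. The $r^1$ coefficient is $i^3\,(s,c)\cdot\nabla A(c,s)$. Since $A$ is symmetric under swapping coordinates, $\nabla A(c,s)$ is $\nabla A(s,c)$ with its components swapped. Hence $(s,c)\cdot\nabla A(c,s)=(c,s)\cdot\nabla A(s,c)$, so the $r^1$ coefficient is the negative of the $r^3$ coefficient. This produces the factor $2(r^3-r)$.

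The main work is the trigonometric bookkeeping in the $r^2$ and $r^3$ coefficients. For example, the $r^3$ term requires $(s^3+3\gamma s^2c+\beta sc^2+\gamma c^3)c+(\gamma s^3+\beta s^2c+3\gamma sc^2+c^3)s$. This equals $\gamma(s^4+c^4)+6\gamma s^2c^2+(\beta+1)sc(s^2+c^2)$, which is $\gamma(1+S^2)+\tfrac{\beta+1}{2}S$; multiplying by $4$ gives the stated coefficient. The $r^2$ term is handled the same way: expand $\eta^T\nabla^2A\,\eta$ and reduce with the identities above.
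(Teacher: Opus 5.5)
Your overall approach is sound. The paper omits the proof of this lemma as elementary, so direct expansion is the only thing to compare with. Organizing the expansion by powers of $r$ works well, as does using homogeneity together with the swap symmetry $A(\xi_1,\xi_2)=A(\xi_2,\xi_1)$ to get the $r^0$ and $r^1$ terms from the $r^4$ and $r^3$ terms. Your $r^4$, $r^0$, $r^3$ and $r^1$ computations are all correct.

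The gap is the $r^2$ coefficient, which you assert (``it should reduce to'') rather than compute. The route you prescribe for it would fail if followed literally, because the mixed derivative in \eqref{g:fcr2} is misprinted. Differentiating \eqref{g:fcr1} gives $\phi_{z_1z_2}=-(3\gamma z_1^2+2\beta z_1z_2+3\gamma z_2^2)$, not $-(3\gamma z_1^2+2z_1z_2+3\gamma z_2^2)$; the paper's later formula \eqref{g:hessian} uses the correct version. With the printed entry you would get $-(2\beta+12\gamma S+(5-\beta)S^2)$, which differs from the lemma unless $\beta=1$ or $S=0$. Your check at $\theta=0$ cannot detect this, because it kills every $S$-dependent term.

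With the correct entries, write $c=\cos\theta$, $s=\sin\theta$. Then $A_{\xi_1\xi_1}(s,c)=12s^2+24\gamma sc+4\beta c^2$, $A_{\xi_1\xi_2}(s,c)=12\gamma+8\beta sc$ and $A_{\xi_2\xi_2}(s,c)=4\beta s^2+24\gamma sc+12c^2$. Hence
\[
c^2A_{\xi_1\xi_1}+2sc\,A_{\xi_1\xi_2}+s^2A_{\xi_2\xi_2}
=4\beta(s^4+c^4)+(24+16\beta)s^2c^2+48\gamma sc
=4\beta+24\gamma S+(2\beta+6)S^2 .
\]
Multiplying by $\tfrac12 i^2=-\tfrac12$ gives exactly $-\big(2\beta+12\gamma S+(\beta+3)S^2\big)$. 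With this step supplied, your proof is complete.
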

We define
\[
\kbg = \frac{ 8 \, q \, (q+\beta-3)}{ (q-\beta+3)^2},
\]

\begin{lemma}
\la{lem:E2}
Assume that $(\beta ,\gamma)\in E_2$.
There holds
\be
\la{g:pol3}
{\rm Re}\, A(\xi +i\eta) \geq - \kbg A(\eta)  \; , \qquad
\xi, \, \eta\in\R^2.
\ee
Moreover the constant $\kbg$ is the best possible and equality 
holds  if and only if
\begin{align}
\ia  &  \mbox{ the point $\eta\in\R^2$ satisfies $
\gamma\eta_1^2 +2\mu\eta_1\eta_2 +\gamma\eta_2^2 =0$} 
\nonumber \\
 \mbox{ and }\ib & \mbox{ the point $\xi$ is one of 
the two points $\pm (r\eta_2 ,r\eta_1 )$ where $r>0$
is defined by}\nonumber  \\[0.2cm]
& \hspace{2.2cm}
r^2 =\frac{3q+\beta-3}{q-\beta+3}.
\la{g:bo1}
\end{align}
\end{lemma}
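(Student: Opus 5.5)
We follow the scheme of Lemma \ref{lem:E1}, i.e. a sum-of-squares / quadratic form representation. Fix $\eta\in\R^2$ and regard ${\rm Re}\,A(\xi+i\eta)+\kbg A(\eta)$ as a quartic polynomial in $\xi$. As in \eqref{g:ho}, it can be written as $X(\xi)^TQ(\eta)X(\xi)$ with $X(\xi)=[\xi_1^2,\xi_1\xi_2,\xi_2^2,1]^T$. There is freedom in the $(1,3)$ and $(2,2)$ entries (only $2Q_{13}+Q_{22}=2\beta$ is fixed). We choose this free parameter so that $Q(\eta)$ is positive semidefinite for all $\eta$, and so that $\det Q(\eta)$ factors as a nonnegative constant times a perfect square $(\gamma\eta_1^2+2\mu\eta_1\eta_2+\gamma\eta_2^2)^2$, up to a positive factor. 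This mirrors the factor $(\eta_1^2-\eta_2^2)^2$ in $D_4$ of Lemma \ref{lem:E1}, now rotated to the extremal directions in condition (i). The values $q,\mu$ in \eqref{q}, and the identity \eqref{g:beli}, are exactly what will make these factorizations work.

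\textbf{Step 1: locate the extremals.} Before committing to $Q$, we determine $\kbg$ and the extremal points directly. By homogeneity (degree $4$ jointly in $(\xi,\eta)$) we may take $\eta=(\cos\theta,\sin\theta)$. Guessing from the form of condition (ii), we take $\xi=r(\sin\theta,\cos\theta)$ and use Lemma \ref{g:agg3}. Write $u=\sin 2\theta$ and minimize the real part
\[
(r^4+1)\Bigl(1+2\gamma u+\tfrac{\beta-1}{2}u^2\Bigr)-r^2\bigl(2\beta+12\gamma u+(\beta+3)u^2\bigr)
\]
divided by $A(\eta)=1+2\gamma u+\frac{\beta-1}{2}u^2$. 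Minimizing in $r^2$ gives $r^2=\frac{2\beta+12\gamma u+(\beta+3)u^2}{2A(\eta)}$, and the value is $2-\frac{(2\beta+12\gamma u+(\beta+3)u^2)^2}{4A(\eta)^2}$. Optimizing this in $u$ leads to a quadratic equation whose relevant root is the one with $\gamma+\mu u=0$, i.e. $u=-\gamma/\mu$. This is exactly condition (i), since $\gamma\eta_1^2+2\mu\eta_1\eta_2+\gamma\eta_2^2=\gamma+\mu u$. Substituting $u=-\gamma/\mu$ and simplifying with $q^2=(\beta+1)^2-16\gamma^2$ should reproduce $\kbg$ and the value of $r^2$ in \eqref{g:bo1}. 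This computation tells us which kernel vector $Q(\eta)$ must have at extremal $\eta$, namely $X(\pm(r\eta_2,r\eta_1))$, and that requirement fixes the free parameter in $Q$.

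\textbf{Step 2: semidefiniteness.} Next we prove $Q(\eta)\ge 0$ via Sylvester's criterion: $D_1=1$, and $D_2,D_3$ are positive constants depending only on $(\beta,\gamma)$. Their positivity should reduce to $\beta>3$, $\gamma^2<(\beta-1)/2$ and $q-\beta+3>0$. The fourth minor is $D_4=c\,(\gamma\eta_1^2+2\mu\eta_1\eta_2+\gamma\eta_2^2)^2$ with $c>0$. Hence $D_4>0$ off the two extremal lines, and by continuity $Q\ge0$ everywhere. This proves \eqref{g:pol3}, and shows that equality forces (i).

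\textbf{Step 3: equality cases.} On the extremal lines we must show that $\ker Q(\eta)$ is one-dimensional, spanned by $[r^2\eta_2^2,\,r^2\eta_1\eta_2,\,r^2\eta_1^2,\,1]^T$. As in Lemma \ref{lem:E1}, this is checked by exhibiting a complementary orthogonal basis on which $Q$ is positive definite. Since $D_3>0$, the top-left $3\times3$ block is already positive definite, so the kernel has dimension at most one; this avoids any explicit eigenvector computation. Then $X(\xi)$ parallel to the kernel vector, with last coordinate $1$, gives $\xi_1^2=r^2\eta_2^2$, $\xi_1\xi_2=r^2\eta_1\eta_2$ and $\xi_2^2=r^2\eta_1^2$, hence $\xi=\pm(r\eta_2,r\eta_1)$. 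Sharpness of $\kbg$ follows from the existence of these equality points. By Lemma \ref{lem:symmetry}-type symmetry ($\gamma\to-\gamma$, $\xi_2,\eta_2\to-\xi_2,-\eta_2$), we may also reduce to one of the two lines.

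\textbf{Main obstacle.} The hard part is Step 2: finding the right free entry of $Q$ and verifying the factorization of $D_4$ together with the sign of $D_3$. These are heavy symbolic computations in which the square root $q$ appears. To keep them manageable, we would first express everything in terms of $q$ and $\mu$, eliminating $\gamma^2$ via $16\gamma^2=(\beta+1)^2-q^2$.
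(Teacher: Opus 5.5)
This is essentially the paper's proof. It uses the same representation ${\rm Re}\,A(\xi+i\eta)+\kbg A(\eta)=X(\xi)^TQ(\eta)X(\xi)$ with the single free entry fixed; the paper takes $Q_{13}=\frac{\beta^2-5\beta+6-(\beta-6)q}{3q+\beta-3}$. It has the same constant minors $D_2,D_3>0$, the same $D_4$ as a positive multiple of $(\gamma\eta_1^2+2\mu\eta_1\eta_2+\gamma\eta_2^2)^2$, and the same conclusion that the kernel is at most one-dimensional because $D_3>0$.

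There are two small slips.

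In Step 1, the minimum over $r^2$ of $r^4+1-r^2B/A(\eta)$, with $B=2\beta+12\gamma u+(\beta+3)u^2$, is $1-B^2/(4A(\eta)^2)=1-r^4$, not $2-\dots$. This is what gives $\kbg=r^4-1=\frac{8q(q+\beta-3)}{(q-\beta+3)^2}$; your version would give $r^4-2$ and would not reproduce $\kbg$.

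In Step 3, the reflection $\gamma\to-\gamma$, $(\xi_2,\eta_2)\to(-\xi_2,-\eta_2)$ does not exchange the two extremal lines for fixed $\gamma$. It sends them to the extremal lines for $-\gamma$. The swap $\xi_1\leftrightarrow\xi_2$, $\eta_1\leftrightarrow\eta_2$ is what exchanges the two lines, though no reduction is needed anyway.

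Your route also gives a small simplification over the paper. Once $Q(\eta)\ge 0$ is established, the corrected Step 1 identity ${\rm Re}\,A(\xi+i\eta)=-\kbg A(\eta)$ holds for $\eta$ satisfying (i) and $\xi=\pm(r\eta_2,r\eta_1)$. Together with positivity, this already forces $Q(\eta)X(\xi)=0$. So you can bypass the entrywise kernel verification that the paper carries out in its appendix.
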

{\em Remark.} We note that such points $(\xi,\eta)$ do exist since
the discriminant of the above quadratic polynomial in (i)
is positive by \eqref{g:beli}.
\proof
We set for simplicity
$T(\xi,\eta) ={\rm Re}\, A(\xi +i\eta) +\kbg A(\eta)$
and so we must prove that
\[
T(\xi,\eta) \geq 0 \; , \qquad
\xi, \; \eta\in\R^2.
\]
with equality if and only if (i) and (ii) above hold true.

Given $\eta\in\R^2$ we define the symmetric matrix
\[
Q(\eta)=\begin{bmatrix}
 1 & 2\gamma   &   \frac{\beta^2-5\beta+6-(\beta-6)q}
 {3q+\beta-3} & -(3\eo^2+6\gamma\eo\et +\beta\eta_2^2) 
  \\[0.1cm]
 *  &  \frac{4\beta-12 +(8\beta-12)q}{3q+\beta-3} & 2\gamma  &
 -(4\beta \eta_1\eta_2 +6\gamma |\eta|^2) \\[0.1cm]
* & *  & 1  &  -(\beta\eta_1^2 +6\gamma\eo \et 
+3\eta_2^2) \\[0.1cm]
*   &  *  & *  & (\kbg +1) A(\eta)
\end{bmatrix}
\]
where  the symbol $*$ indicates definition by symmetry. We also set
$$
X(\xi)=[ \xo^2  , \;   \xo \xt , \;   \xt^2 , \; 1 ]^{T} \; , 
\qquad \xi\in\R^2.
$$
It is a simple computation to verify that
\[
T(\xi,\eta)= X(\xi)^T Q(\eta)X(\xi) \; , \qquad
\xi, \; \eta\in\R^2.
\]
We consider the principal minors $Q_i$, $i=1,2,3,4$ of $Q$ (so $Q_1=[1]$
and $Q_4=Q$) and the correspodning determinants $D_i=\det (Q_i)$.
A  direct computation gives
\begin{eqnarray*}
D_2 &=& \frac{4\beta-12 +(8\beta-12)q}{\beta-3+3q} 
-4\gamma^2 \\
&>& \frac{4\beta-12 +(8\beta-12)q}{\beta-3+3q} -2(\beta-1) \\
&=& \frac{2(\beta-2)(q -\beta+3)}{3q+\beta-3} \\
&>& 0.
\end{eqnarray*}
We also find
\[
D_3 = \frac{ (\beta-3)(3q -\beta+3)(q-\beta+3)^2
 \Big(\beta^2 +6\beta -3-24\gamma^2 
+3(\beta-3)q \Big)}{ (q+\beta-3)^3}
\]
and therefore $D_3>0$ since
\[
\beta^2 +6\beta -3-24\gamma^2 > \beta^2 +6\beta -3 -12(\beta-1)
=(\beta-3)^2>0.
\]
Concerning $D_4=\det(Q)$, it is a matter of lengthy but otherwise 
simple computations to see that
\[
D_4  = 16 \, (\beta-3)^2 \; \frac{q+ \beta-3}{q+\beta+1}
  \bigg(\frac{3q-\beta+3}{3q+\beta-3}\bigg)^2 
\big(\gamma \, \eta_1^2 + 2\mu \, \eta_1\eta_2
 +\gamma \, \eta_2^2\big)^2.
\]

It follows that if 
$\gamma\, \eta_1^2 + 2\mu \,  \eta_1\eta_2 +\gamma\, 
\eta_2^2 \neq 0$
then $Q(\eta)$ is positive definite and therefore 
$T(\xi,\eta)>0$ for all $\xi\in\R^2$.

Assume now that
\[
\gamma \eta_1^2 + 2\mu   \eta_1\eta_2 +\gamma\eta_2^2 =0.
\]
We then have $T(\xi,\eta)\geq 0$ for all $\xi, \,\eta\in\R^2$ by
continuity, so it remains to prove that equality holds precisely if
$\xi\in\R^2$ is one of the two points defined by (ii).
So let 
\[
(\xi_1,\xi_2)= r \,  (\eta_2,\eta_1).
\]
It is a matter of direct computation to verify
that $Q(\eta)X(\xi)=0$; see Lemma \ref{lem:app} in the Appendix.
So $T(\xi,\eta)=0$ as required. Since $X(-\xi)
=X(\xi)$, we also have $T(-\xi,\eta)=0$.

Finally, we must show that no other such points $\xi$ exist.
Indeed, assume that some $\xi' \in\R^2$ satisfies
$T(\xi' , \eta)=0$.
Then $X(\xi')$ belongs in the kernel of $Q(\eta)$. Since $D_1, \, D_2, \, D_3>0$, the kernel is one-dimensional
and hence $X(\xi')$ is a scalar multiple of $X(\xi)$. We conclude
that $\xi'$ is indeed one of the points $\pm\xi$.
\endproof

\begin{lemma}
\la{lem:5.3}
Let $(\beta ,\gamma)\in E_2$.
Assume that $\eta=(\cos\theta, \sin\theta)$ satisfies condition (i) of Lemma
\ref{lem:E2}. Then
\begin{eqnarray*}
\ia &&   \gamma +\frac{\beta+1}{2}\sin(2\theta) 
+ \gamma\sin^2(2\theta) =0 \, , 
\\
\ib &&   A(\eta)=  \frac{q (q-\beta+3)}{2(q+\beta+1)}.
\end{eqnarray*}
\end{lemma}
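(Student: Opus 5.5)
Write $\eta=(\cos\theta,\sin\theta)$ and $s=\sin(2\theta)$, so that $\eta_1^2+\eta_2^2=1$ and $\eta_1\eta_2=s/2$. Condition (i) of Lemma \ref{lem:E2} then reads $\gamma+\mu s=0$, i.e.\ $s=-\gamma/\mu$. Part (i) then reduces to the scalar identity
\[
\gamma\mu^2-\frac{\beta+1}{2}\gamma\mu+\gamma^3=0,
\]
obtained by substituting $s=-\gamma/\mu$ and multiplying by $\mu^2$. For $\gamma=0$ this is trivial; otherwise it is equivalent to $\mu^2-\frac{\beta+1}{2}\mu+\gamma^2=0$. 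In other words, $\mu$ should be a root of $2\mu^2-(\beta+1)\mu+2\gamma^2=0$, whose roots are $\big((\beta+1)\pm q\big)/4$ with $q$ as in \eqref{q}. The definition $\mu=(q+\beta+1)/4$ picks the $+$ root, so (i) holds. To keep things clean I would verify directly that $\mu^2=\frac{\beta+1}{2}\mu-\gamma^2$ from $q^2=(\beta+1)^2-16\gamma^2$, since this relation is reused in (ii).

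For (ii), note that $A(\eta)=1+2\gamma s+\frac{\beta-1}{2}s^2$ (the expression already appearing in Lemma \ref{lem:3.2} and Lemma \ref{g:agg3}). Substituting $s=-\gamma/\mu$ gives
\[
A(\eta)=1-\frac{2\gamma^2}{\mu}+\frac{(\beta-1)\gamma^2}{2\mu^2}.
\]
The key simplification is to replace $\gamma^2$ by $\frac{\beta+1}{2}\mu-\mu^2$ using (i). Then $\gamma^2/\mu=\frac{\beta+1}{2}-\mu$ and $\gamma^2/\mu^2=\frac{\beta+1}{2\mu}-1$, which yields
\[
A(\eta)=1-(\beta+1)+2\mu+\frac{\beta-1}{2}\Big(\frac{\beta+1}{2\mu}-1\Big)
=2\mu-\frac{3(\beta-1)}{2}+\frac{\beta^2-1}{4\mu}.
\]

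It remains to express this in terms of $q$. With $4\mu=q+\beta+1$ we get
\[
A(\eta)=\frac{(q+\beta+1)^2-6(\beta-1)(q+\beta+1)+4(\beta^2-1)}{2(q+\beta+1)}.
\]
Expanding the numerator gives $q^2-(4\beta-8)q+(-\beta^2+6\beta-3)$, and the target numerator is $q(q-\beta+3)=q^2-(\beta-3)q$. These do not obviously match, so the main risk is this final bookkeeping. I would use $q^2=(\beta+1)^2-16\gamma^2$ only where needed, and expect that one must instead keep some $\gamma^2$ explicit, or that I have mis-signed $\mu$. In particular I would double-check which root of $\gamma+\mu s=0$ is consistent with $|s|\le 1$, and also recheck the formula for $A(\eta)$ on the unit circle. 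If the direct route stays messy, I would fall back on computing $A(\eta)$ with $\eta_1\eta_2$ and $\eta_1^2+\eta_2^2$ as the variables, eliminate $\gamma^2$ via $16\gamma^2=(\beta+1)^2-q^2$, and simplify the resulting rational function of $q$ and $\beta$ symbolically.
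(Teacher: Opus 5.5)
Your approach is the paper's: condition (i) gives $\sin(2\theta)=-\gamma/\mu$, and both identities follow by substitution. Your part (i) is correct, including the observation that $\mu$ is a root of $2\mu^2-(\beta+1)\mu+2\gamma^2=0$. This relation holds identically from the definitions of $q$ and $\mu$, also when $\gamma=0$.

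Part (ii), however, is not proved as written. You stop at a numerator that does not match, and you attribute the discrepancy to a possibly wrong sign of $\mu$ or a wrong root. That diagnosis is wrong. The mismatch comes from two arithmetic slips.

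First, $1-(\beta+1)-\frac{\beta-1}{2}=\frac{1-3\beta}{2}$, not $-\frac{3(\beta-1)}{2}$. So the correct intermediate expression is
\[
A(\eta)=2\mu+\frac{1-3\beta}{2}+\frac{\beta^2-1}{4\mu}.
\]

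Second, set $P=q+\beta+1=4\mu$ and put this over the common denominator $2P$. The three terms become $P^2$, $(1-3\beta)P$ and $2(\beta^2-1)$. You doubled the last two (writing $6(\beta-1)$ and $4(\beta^2-1)$) but not the first.

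With the correct numerator,
\[
P^2+(1-3\beta)P+2(\beta^2-1)=q^2+(3-\beta)q+(\beta+1)\big[(\beta+1)+(1-3\beta)+2(\beta-1)\big]=q(q-\beta+3),
\]
because the bracket vanishes. Hence $A(\eta)=\frac{q(q-\beta+3)}{2(q+\beta+1)}$ directly.

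None of your proposed fallbacks is needed: no re-elimination of $\gamma^2$ via $16\gamma^2=(\beta+1)^2-q^2$, and no check involving $|s|\le 1$.
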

\proof We have
\[
\gamma +\mu\sin(2\theta) = \gamma\cos^2\theta +2\mu\cos\theta\sin\theta +\gamma\sin^2\theta
= 0  \, ,
\]
hence
\be
\sin(2\theta) = -\frac{\gamma}{\mu} = -\frac{4\gamma}{ q+\beta+1}
\la{g:sif}
\ee
Now (i) and (ii) follow after substitution, noting also for (ii) that
\[
A(\eta)  =  1 +2\gamma \sin(2\theta) + \frac{\beta-1}{2}\sin^2(2\theta).
\qedhere
\]

\begin{lemma}
\la{lem:5.4}
Let $(\beta ,\gamma)\in E_2$ and
let $(\xi,\eta)\in\R^4$ be a point for which equality is valid in
\eqref{g:pol3}. Writing $\eta=|\eta|(\cos\theta , \sin\theta)$,
at this point $(\xi,\eta)$ we have
\begin{eqnarray*}
\ia  &&  \big( {\rm Re}\, A(\xi +i\eta) \big)_{\eta_1}
=- \frac{ 8 \, q \, (q+\beta-3)}{ q-\beta +3} \,  |\eta|^2 \eta_1  \cos(2\theta) \, , \\
\ib &&  \big( {\rm Re}\, A(\xi +i\eta) \big)_{\eta_2}
=\frac{ 8 \, q \, (q+\beta-3)}{ q-\beta +3} \,  |\eta|^2 \eta_2  \cos(2\theta) .
\end{eqnarray*}
\end{lemma}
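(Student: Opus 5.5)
The plan is to avoid differentiating ${\rm Re}\, A(\xi+i\eta)$ directly and to use the extremal property from Lemma \ref{lem:E2} instead. As in the proof of that lemma, set $T(\xi,\eta)={\rm Re}\, A(\xi+i\eta)+\kbg A(\eta)$. Lemma \ref{lem:E2} gives $T\geq 0$ on $\R^4$, and $T(\xi,\eta)=0$ at the given point. So for fixed $\xi$ the smooth function $\eta'\mapsto T(\xi,\eta')$ has a global minimum at $\eta'=\eta$, and its gradient in $\eta$ vanishes there. Hence
\[
\nabla_\eta\, {\rm Re}\, A(\xi+i\eta) = -\kbg \nabla A(\eta)
\]
at the point. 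The statement is therefore reduced to computing $\nabla A(\eta)$ at points $\eta$ satisfying condition (i) of Lemma \ref{lem:E2}; $\xi$ plays no further role.

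Since $\kbg = 8q(q+\beta-3)/(q-\beta+3)^2$, both (i) and (ii) are equivalent to the single identity
\[
\nabla A(\eta) = (q-\beta+3)\,|\eta|^2\cos(2\theta)\,(\eta_1,-\eta_2).
\]
Both sides are homogeneous of degree $3$ in $\eta$, and condition (i) is invariant under scaling. It therefore suffices to take $|\eta|=1$, $\eta=(\cos\theta,\sin\theta)$. I would split $\nabla A$ into its radial and angular components. Euler's relation gives $\eta\cdot\nabla A(\eta)=4A(\eta)$. Writing $\eta^\perp=(-\sin\theta,\cos\theta)$, the angular component is $\eta^\perp\cdot\nabla A=\frac{d}{d\theta}A(\cos\theta,\sin\theta)$. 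Using $A(\eta)=1+2\gamma\sin(2\theta)+\frac{\beta-1}{2}\sin^2(2\theta)$, this derivative is $\cos(2\theta)\big(4\gamma+2(\beta-1)\sin(2\theta)\big)$. Thus
\[
\nabla A(\eta)=4A(\eta)\,\eta+\cos(2\theta)\big(4\gamma+2(\beta-1)\sin(2\theta)\big)\,\eta^\perp.
\]
For the target vector $(\cos\theta,-\sin\theta)$, the radial component is $\cos(2\theta)$ and the angular component is $-\sin(2\theta)$. So it remains to check two scalar identities:
\[
4A(\eta)=(q-\beta+3)\cos^2(2\theta),\qquad 4\gamma+2(\beta-1)\sin(2\theta)=-(q-\beta+3)\sin(2\theta).
\]

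These are verified by substituting $\sin(2\theta)=-4\gamma/(q+\beta+1)$ from \eqref{g:sif} and $A(\eta)=q(q-\beta+3)/(2(q+\beta+1))$ from Lemma \ref{lem:5.3}(ii). For the second identity, after multiplying by $(q+\beta+1)$ the claim becomes $4\gamma(q+\beta+1)-8\gamma(\beta-1)=4\gamma(q-\beta+3)$, which holds identically. For the first, one needs $\cos^2(2\theta)=1-16\gamma^2/(q+\beta+1)^2$. Using $16\gamma^2=(\beta+1)^2-q^2$, this equals $2q/(q+\beta+1)$, which is exactly what $4A(\eta)/(q-\beta+3)$ gives.

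The only real obstacle is organizing the algebra so that it closes cleanly; the radial/angular decomposition together with the relation $16\gamma^2=(\beta+1)^2-q^2$ should make this routine. I would also note that the sign of $\cos(2\theta)$ is never needed, since it appears linearly on both sides.
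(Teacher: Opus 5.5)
Your proof is correct, and it takes a genuinely different route from the paper's.

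\textbf{The paper's route.} The paper writes out $\big({\rm Re}\,A(\xi+i\eta)\big)_{\eta_1}$ explicitly and substitutes $\eta=(\cos\theta,\sin\theta)$ and $\xi=r(\sin\theta,\cos\theta)$. It then rewrites everything in terms of $2\theta$, dividing by $\cos\theta$ and by $\sin(2\theta)$. One group of terms is removed using Lemma~\ref{lem:5.3}(i), and the value of $r^2$ from \eqref{g:bo1} and of $\sin(2\theta)$ from \eqref{g:sif} are substituted into the rest. Part (ii) is obtained by interchanging $\eta_1,\eta_2$.

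\textbf{Your route.} You note that $T={\rm Re}\,A(\xi+i\eta)+\kbg A(\eta)$ is a polynomial, is $\geq 0$ on $\R^4$ by Lemma~\ref{lem:E2}, and vanishes at the given point. Hence $\nabla_\eta T=0$ there, i.e.\ $\nabla_\eta{\rm Re}\,A(\xi+i\eta)=-\kbg\nabla A(\eta)$. This removes $\xi$, and with it the value of $r$, from the problem entirely. The lemma becomes a statement about $\nabla A$ on the two lines of condition (i), which you check cleanly via Euler's relation and the $\theta$-derivative of $A(\cos\theta,\sin\theta)$.

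I checked both scalar identities. The angular one reduces to $4\gamma(q+\beta+1)-8\gamma(\beta-1)=4\gamma(q-\beta+3)$. The radial one uses $\cos^2(2\theta)=2q/(q+\beta+1)$, which agrees with \eqref{g:cos2}. The reduction to a single vector identity is legitimate because $\kbg>0$ for $\beta>3$.

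\textbf{What each approach buys.} Your argument treats both components and both points $\pm\xi$ at once, and it never divides by $\cos\theta$ or $\sin(2\theta)$. The paper, by contrast, must set aside $\eta_1=0$, and its manipulation is formally $0/0$ when $\gamma=0$, since then $\sin(2\theta)=0$. The paper's computation needs only the explicit form of the extremal pair, not the global inequality. Since Lemma~\ref{lem:E2} is already established, that is no real advantage. Your argument also makes transparent why $u_{\xi_j}$ and the $\eta$-gradient behave as used later in Lemma~\ref{g:agg1}: both come from the first-order conditions at a minimum of $T$.
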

\proof (i) We first note that at any point $(\xi,\eta)$ we
have
\begin{eqnarray*}
\big( {\rm Re}\, A(\xi +i\eta) \big)_{\eta_1} &=& -12(\eta_1 +
\gamma\eta_2)\xi_1^2 -4(\beta\eta_1+3\gamma\eta_2)\xi_2^2 -8(3\gamma\eta_1+\beta\eta_2)\xi_1\xi_2   \\
&&+4\eta_1^3 +12\gamma\eta_1^2\eta_2 +4\beta\eta_1\eta_2^2
 +4\gamma\eta_2^3 .
\end{eqnarray*}
Without loss of generality we assume that $|\eta|=1$. We also assume that
$\eta_1\neq 0$, the case $\eta_1=0$ can be obtained 
separately or by a limiting argument.

Taking $\eta=(\cos\theta , \sin\theta)$ and $\xi=(r\sin\theta,r\cos\theta)$
we find
\begin{eqnarray*}
&& \hspace{-1.5cm}
\frac{\big( {\rm Re}\, A(\xi +i\eta) \big)_{\eta_1}}{\eta_1} \\
&=& 4(1-\beta r^2)\cos^2\theta +4(\beta-3r^2 -2\beta r^2)
\sin^2\theta  \\
&& +12\gamma (1-3r^2)\cos\theta \sin\theta 
+4\gamma (1-3r^2) \frac{\sin^3\theta}{\cos\theta} \\
&=& 2(\beta+1)(1-3r^2) +2\big(1-\beta +(\beta+3)r^2\big)\cos(2\theta) \\
&& +6\gamma (1-3r^2)\sin(2\theta) + 2\gamma (1-3r^2)\, 
\frac{\cos^2(2\theta)-2\cos(2\theta)+1}{\sin(2\theta)} \\
&=& (1-3r^2)\Big( 2(\beta+1)+4\gamma\sin(2\theta) +\frac{4\gamma}{\sin(2\theta)}\Big) \\
&& +  \Big(  2 \big( 1-\beta +(\beta+3)r^2 \big) -\frac{4\gamma(1-3r^2)}
{\sin(2\theta)}\Big)\cos(2\theta).
\end{eqnarray*}
The first term in the RHS vanishes by (i) of Lemma \ref{lem:5.3}.
For the second term we
substitute for $r^2$ and $\sin(2\theta)$ using \eqref{g:bo1} and 
\eqref{g:sif} respectively  and (i) follows.
To prove (ii) we interchange the roles of $\eta_1$ and $\eta_2$. The only difference is that $\theta$ is replaced by $\pi/2 \; -\theta$, which accounts
for the difference in sign. \endproof

\begin{lemma}
\la{g:ser1}
Let $(\beta ,\gamma)\in E_2$.  If the point
$x\in\R^2\setminus\{0\}$ satisfies
$\gamma x_1^2 - 2\mu   x_1 x_2 +\gamma x_2^2 =0$,
then its norm is given by
\be
p(x) =\bigg(\frac{ 8 \; q}{ (q+\beta+1)(q-\beta+3)}
\bigg)^{\frac14} |x|.
\la{g:ser2}
\ee
\end{lemma}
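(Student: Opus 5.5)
The plan is to compute $p(x)=\max_{\xi\neq0} x\cdot\xi/A(\xi)^{1/4}$ directly, in the spirit of Lemma \ref{lem:3.2}. By homogeneity we may assume $|x|=1$ and write $x=e_\varphi=(\cos\varphi,\sin\varphi)$. Then
\[
p(x)=\max_{t}\frac{\cos(t-\varphi)}{\big(1+2\gamma\sin(2t)+\frac{\beta-1}{2}\sin^2(2t)\big)^{1/4}} .
\]
The hypothesis $\gamma x_1^2-2\mu x_1x_2+\gamma x_2^2=0$ reads $\gamma-\mu\sin(2\varphi)=0$, that is $\sin(2\varphi)=\gamma/\mu=4\gamma/(q+\beta+1)$. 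This is the sign-reflection of \eqref{g:sif}. It suggests that the extremal direction $\xi$ is tied to the extremal $\eta$ of Lemma \ref{lem:E2} with the second coordinate reflected, and that the $\eta$-gradient identities of Lemma \ref{lem:5.4} are where the Lagrange condition will come from.

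\emph{Step 1 (candidate maximiser).} Write down the Lagrange condition $x\parallel\nabla A(\xi)$ for the maximiser $\xi=e_{t_0}$. Equivalently, the $t$-derivative of $\log\cos(t-\varphi)-\frac14\log A(e_t)$ vanishes:
\[
\tan(t_0-\varphi)=-\frac{\cos(2t_0)\big(\gamma+\frac{\beta-1}{2}\sin(2t_0)\big)}{A(e_{t_0})}.
\]
Using $\sin(2\varphi)=\gamma/\mu$, together with the identity $\mu^2-\gamma^2=q(q+\beta+1)/8$ from \eqref{g:beli} and $2\mu^2-\frac{\beta+1}{2}\mu+\ldots$ (the quadratic relation defining $q$), we solve explicitly for $\sin(2t_0)$. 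This should give $A(e_{t_0})$ and $\cos(t_0-\varphi)$ as rational expressions in $q,\beta$.

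\emph{Step 2 (value).} Substitute into $\cos^4(t_0-\varphi)/A(e_{t_0})$ and simplify, using $q^2=(\beta+1)^2-16\gamma^2$ to eliminate $\gamma^2$. The target is
\[
\frac{8q}{(q+\beta+1)(q-\beta+3)}.
\]
The factor $q-\beta+3$ appearing here matches Lemma \ref{lem:5.3}(ii), which is a useful consistency check.

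\emph{Step 3 (global maximality).} This is the main obstacle. We must show
\[
\cos^4(t-\varphi)\le \frac{8q}{(q+\beta+1)(q-\beta+3)}\,A(e_t)\qquad\text{for all } t .
\]
After expanding with double-angle formulas, this becomes nonnegativity of a trigonometric polynomial of degree four in $t$. The first approach I would try is to expand in $\cos(2t),\sin(2t)$. Since equality holds at $t_0$ and at $t_0+\pi$ with vanishing derivative, the polynomial should factor as a perfect square times a manifestly nonnegative factor. Failing a clean factorisation, the fallback mirrors Lemma \ref{lem:3.2}: the inequality is affine in a suitable parameter (e.g.\ in $\gamma$ after fixing $\sin(2\varphi)$ appropriately), so it can be checked at the endpoints of the admissible range. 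Alternatively, we can read the inequality off Lemma \ref{lem:E2}: take $\eta$ on the extremal line, and note that Re$A(\xi+i\eta)+\kbg A(\eta)\ge0$ with equality at $\xi=\pm r(\eta_2,\eta_1)$. Differentiating as in Lemma \ref{lem:5.4} identifies $x$ with the normalized gradient $\nabla_\eta$, which yields the supporting-hyperplane inequality for the Finsler unit ball.

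The case of the second line follows by Lemma \ref{lem:symmetry}(ii), since reflecting $x_2\mapsto -x_2$ and $\gamma\mapsto-\gamma$ preserves both the line condition and the right-hand side of \eqref{g:ser2}.
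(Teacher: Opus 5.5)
Your outline follows the paper's skeleton: locate the maximiser of $t\mapsto\cos(t-\varphi)A(e_t)^{-1/4}$, evaluate it, then prove global maximality by a perfect-square factorisation. However, the two steps that carry the content are not supplied.

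In Step~1 you never identify the maximiser. The Lagrange condition is the binary cubic $x_2\partial_1A(\xi)-x_1\partial_2A(\xi)=0$, which has up to three root lines, so it cannot just be ``solved for $\sin(2t_0)$'' without knowing a root. The root is $t_0=-\varphi$ (mod $\pi$), i.e.\ $\xi\parallel(x_1,-x_2)$, a direction on the extremal $\eta$-line of Lemma~\ref{lem:E2}. Substituting it reduces your equation to $\gamma(1+\sin^2 2\varphi)=\tfrac{\beta+1}{2}\sin 2\varphi$, which holds because $\mu^2-\tfrac{\beta+1}{2}\mu+\gamma^2=0$. The value then follows from $\cos^2(2\varphi)=2q/(q+\beta+1)$ and $A(e_{-\varphi})=q(q-\beta+3)/(2(q+\beta+1))$.

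The real gap is Step~3: the cofactor is \emph{not} manifestly nonnegative. With $S=\tan\varphi$, $T=\tan t$, the paper writes $\cos^4(2\varphi)A(e_t)-\cos^4(t-\varphi)A(e_{-\varphi})$ as a nonnegative factor times $(S+T)^2(A_2T^2+A_1T+A_0)$. The quadratic has discriminant $-8(1-S^2)^2(\beta S^4-6S^2+\beta)(S^4+(4-2\beta)S^2+1)$. The middle factor is positive because $\beta>3$, and the last is nonnegative only because $\gamma^2<(\beta-1)/2$ forces $\sin^2(2\varphi)\le 2/(\beta-1)$. Your proposal never says where $(\beta,\gamma)\in E_2$ is used, and it must be used. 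For $(\beta,\gamma)=(1,\tfrac{20}{41})\in E_1$ one gets $\sin 2\varphi=\gamma/\mu=\tfrac45$, and $t=-\varphi$ is still critical but is a strict local \emph{minimum} of $\log\big(\cos^4(t-\varphi)/A(e_t)\big)$ (second derivative $\tfrac{28}{9}$).

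Neither fallback closes this gap as written.

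\emph{Reading the inequality off Lemma~\ref{lem:E2}.} This fails. Differentiating at an equality point gives only the first-order relation $x=\tfrac{\kbg}{4}\nabla A(\eta_0)$, i.e.\ that $\eta_0$ is a critical point of $\xi\mapsto x\cdot\xi/A(\xi)^{1/4}$. That is the same information as Step~1, and it says nothing about the global supporting-line property of the non-convex set $\{A\le 1\}$, which is exactly what is at stake.

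\emph{The ``affine in a parameter'' idea.} This can be made to work, but only with observations absent from your proposal. With $s=\sin 2\varphi$ fixed, one has $\gamma=\tfrac{\beta+1}{2}\tfrac{s}{1+s^2}$ and $q=(\beta+1)\tfrac{1-s^2}{1+s^2}$, so the claimed value satisfies $p_0^{-4}=\tfrac{2+(1-\beta)s^2}{2(1-s^4)}$. Hence $p_0^{-4}\cos^4(t-\varphi)\le A(e_t)$ is affine in $\beta$. It is trivial at $\beta=1+2/s^2$ for $s\neq0$, where $\gamma^2=(\beta-1)/2$ and $p_0^{-4}=0$. At $\beta=3$ it follows from $A=\tfrac{1+\gamma}{2}(\xi_1+\xi_2)^4+\tfrac{1-\gamma}{2}(\xi_1-\xi_2)^4$, whose dual norm is an explicit weighted $\ell^{4/3}$ norm giving $p(x)^4=1+s^2$. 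That would be a genuinely different and shorter route than the paper's discriminant computation, but you have to supply it.

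Finally, Lemma~\ref{lem:symmetry}(ii) relates $(\beta,\gamma)$ to $(\beta,-\gamma)$; it does not exchange the two lines. They are exchanged by $x_1\leftrightarrow x_2$, and working with $\sin 2\varphi=\gamma/\mu$ covers both at once.
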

\proof Writing $x=|x|(\cos\omega,\sin\omega)$ we have
(cf. proof of Lemma \ref{lem:3.2})
\[
p(x) =\max_{t\in\R} \frac{ \cos(t-\omega)}
{ \Big(  1 +2\gamma \sin(2t) + \frac{\beta-1}{2}\sin^2(2t)
\Big)^{\frac14}}\, |x|.
\]
{\bf Claim.} The above maximum is attained at $t=-\omega$.

To prove this claim it is enough to establish that for all $t\in\R$,
\begin{eqnarray*}
H(t) &:= &\cos^4(2\omega) \Big(  1 +2\gamma \sin(2t) + \frac{\beta-1}{2}\sin^2(2t) \Big) \\
&& \;\; - \cos^4(t-\omega)\Big(  1 -2\gamma \sin(2\omega) + 
\frac{\beta-1}{2}\sin^2(2\omega) \Big) \geq 0 \; .
\end{eqnarray*}
The argument the led to (i) of Lemma \ref{lem:5.3} from assumption
(i) of Lemma \ref{lem:E2} can also be applied to $(x_1,x_2)$ with
only a difference in sign. We thus obtain
\[
\gamma =\frac{\beta+1}{2} \cdot \frac{\sin(2\omega)}{1+\sin^2(2\omega)}.
\]
Morever, applying the elementary identities 
\[
\sin(2\omega) =\frac{ 2\tan \omega}{1+\tan^2 \omega} \; , \qquad 
\cos(2\omega) =\frac{1-\tan^2\omega}{1+\tan^2\omega}
\]
and
\[
\cos^2(t-\omega) = \frac{ (1+\tan t \, \tan\omega)^2}{ (1+\tan^2 t)
(1+\tan^2 \omega)}
\]
we express all trigonometric numbers in $F(t)$ in terms of  $S:=\tan\omega$ and $T:=\tan  t$. We obtain
\[
H(t)= \frac{ (1-S^2)^2(S+T)^2}{ (1+S^2)^4(1+ 6S^2+S^4)(1+T^2)^2} ( A_2 T^2+A_1T+ A_0),
\]
where
\begin{align*}
A_2 \, &= \,  2\, S^6 \beta -11\,  S^4+4S^2 +1 \, ,  \\
A_1 \, & = \, (4\beta+2)\, S^5 -4(\beta+4)\,  S^3  +(4\beta+2)
\, S \, , \\
A_0 \, & = \, S^6 +4\, S^4 -11\, S^2 +2\beta \, .
\end{align*}
The discriminant of the quadratic polynomial is
\[
A_1^2-4A_0A_2= -8(1-S^2)^2 (\beta \, S^4  -6S^2 +\beta)
\Big(S^4+ (4-2\beta)S^2 +1\Big).
\]
Since $\beta>3$ we have $\beta \, S^4  -6S^2 +\beta >0$. So
the Claim will be proved if we show that 
$S^4+ (4-2\beta)S^2 +1 \geq 0$.
For this we note that
\[
\sin(2\omega) =  \frac{ \gamma}{\mu} = \frac{4\gamma}{q+\beta+1}.
\]
Recalling that $q=\sqrt{(\beta +1)^2-16\gamma^2}$ we see that
for fixed $\beta>3$, $\sin(2\omega)$ is an odd and
increasing function of $\gamma$. Since
$\gamma^2<(\beta-1)/2$, we conclude that
\[
\frac{4 \, S^2}{(1+S^2)^2}=
\sin^2(2\omega) \leq \frac{16\, \gamma^2}{ (q+\beta+1)^2}
\bigg|_{\gamma^2=
\frac{\beta-1}{2}}  =\frac{2}{\beta-1}.
\]
Therefore $2(\beta-1)S^2 \leq (1+S^2)^2$ and the Claim follows.

Finally, to compute the maximum value we note that
\[
 1 -2\gamma \sin(2\omega) + \frac{\beta-1}{2}
\sin^2(2\omega) = \frac{ q \, (q-\beta+3)}{2(q+\beta +1)}
\]
and
\be
\cos^2(2\omega)= 1-\sin^2(2\omega)
=\frac{ 2 q}
{ q+\beta+1}.
\la{g:cos2}
\ee
Hence
\[
p^4(x)  =   \frac{\cos^4(2\omega)}{  1 -2\gamma \sin(2\omega) + \frac{\beta-1}{2}\sin^2(2\omega)} 
 =   \frac{ 8 \; q}{ (q+\beta+1)(q-\beta+3)}
\]
and the proof is complete.\endproof

\

\begin{lemma}
Let $(\beta ,\gamma)\in E_2$.
Assume that $x\in\R^2\setminus\{0\}$ satisfies
$\gamma x_1^2 - 2\mu   x_1 x_2 +\gamma x_2^2 =0$. Then there exist
two points $(\pm\xi_0 , \eta_0)\in\R^2\times\R^2$ which are extremal points
for inequality \eqref{g:pol3} and are also critical points of the function
$\phi$ defined by \eqref{PHI}.
\la{g:agg1}
\end{lemma}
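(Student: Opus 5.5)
The plan is to construct $(\pm\xi_0,\eta_0)$ explicitly. We take $\eta_0$ as a scalar multiple of the reflection of $x$, and $\xi_0$ as in (ii) of Lemma \ref{lem:E2}. Then we verify the critical point equations $\nabla\phi(\pm\xi_0+i\eta_0)=0$ through Lemma \ref{lem:5.4}, instead of expanding \eqref{g:fcr1} directly.

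First, the direction of $\eta_0$. Write $x=|x|(\cos\omega,\sin\omega)$. The condition $\gamma x_1^2-2\mu x_1x_2+\gamma x_2^2=0$ says that $(x_1,-x_2)$ satisfies condition (i) of Lemma \ref{lem:E2}. So we set $\theta=-\omega$ and $\eta_0=\rho(\cos\theta,\sin\theta)$ with $\rho>0$ to be determined. Then (i) of Lemma \ref{lem:E2} holds, and with $r$ from \eqref{g:bo1} we put $\xi_0=r(\eta_{0,2},\eta_{0,1})$. By Lemma \ref{lem:E2}, the points $(\pm\xi_0,\eta_0)$ are extremal for \eqref{g:pol3} for every $\rho>0$, so it remains to choose $\rho$ so that they are critical points of $\phi$.

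Second, the critical point equations. Since $A$ is holomorphic, the Cauchy--Riemann equations give $\phi_{z_j}(\xi+i\eta)=ix_j-\frac14(\partial_{\xi_j}A)(\xi+i\eta)$ and $\partial_{\eta_j}A=i\,\partial_{\xi_j}A$. Hence $\nabla\phi=0$ is equivalent to
\[
\partial_{\eta_j}{\rm Re}\,A(\xi+i\eta)=-4x_j, \qquad \partial_{\eta_j}{\rm Im}\,A(\xi+i\eta)=0, \qquad j=1,2.
\]
We verify the imaginary-part equations as follows. The function $\xi\mapsto {\rm Re}\,A(\xi+i\eta_0)$ attains its minimum at $\xi=\pm\xi_0$ by Lemma \ref{lem:E2}. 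So $\partial_{\xi_j}{\rm Re}\,A=0$ there, and by Cauchy--Riemann this equals $\partial_{\eta_j}{\rm Im}\,A$.

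For the real-part equations we use Lemma \ref{lem:5.4}, which gives
\[
\nabla_\eta{\rm Re}\,A=\frac{8q(q+\beta-3)}{q-\beta+3}\,\rho^2\cos(2\theta)\,(-\eta_{0,1},\eta_{0,2}).
\]
This vector is proportional to $(-\cos\theta,\sin\theta)=-(\cos\omega,\sin\omega)$, i.e.\ a negative multiple of $x/|x|$, provided $\cos(2\theta)>0$. That choice of $\theta$ among the admissible angles (equivalently the choice of sign of $x$) needs care. If $\cos 2\theta<0$, we replace $\theta$ by $\theta+\pi/2$, which preserves condition (i) by its symmetry under $(\eta_1,\eta_2)\mapsto(-\eta_2,\eta_1)$ up to sign. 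Alternatively, we replace $\eta_0$ by $-\eta_0$ and note that $\eta_1\eta_2$ is invariant. Matching magnitudes gives
\[
\frac{8q(q+\beta-3)}{q-\beta+3}\,\rho^3|\cos2\theta|=4|x|,
\]
which determines $\rho>0$ uniquely, with $|\cos2\theta|=\sqrt{2q/(q+\beta+1)}$ by \eqref{g:cos2}.

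The main obstacle is this sign and orientation bookkeeping: showing that the reflected direction $\theta=-\omega$, or the appropriate rotation of it, indeed yields $\nabla_\eta{\rm Re}\,A$ pointing along $-x$ rather than along a different admissible line. If it fails, we fall back on checking \eqref{g:fcr1} directly via Lemma \ref{g:agg3}, using Lemma \ref{lem:5.3}(i) to simplify.
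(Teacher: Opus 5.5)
Your proposal is essentially the paper's proof. The ingredients coincide:
\begin{itemize}
\item $\eta_0$ lies along the reflection $(\cos\omega,-\sin\omega)$ of $x$, which is the paper's \eqref{oik};
\item $\xi_0$ is taken from (ii) of Lemma \ref{lem:E2};
\item the $\xi$-equations follow from minimality, and your identity $\partial_{\eta_j}{\rm Im}\,A=\partial_{\xi_j}{\rm Re}\,A$ is the paper's statement $u_{\xi_j}=0$;
\item the $\eta$-equations are solved via Lemma \ref{lem:5.4}, giving the same value of $|\eta_0|^3$.
\end{itemize}

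One correction: rotating $\theta$ by $\pi/2$ does not preserve condition (i) when $\gamma\neq0$, since it turns $\sin 2\theta=-\gamma/\mu$ into $+\gamma/\mu$. So only your alternative $\eta_0\mapsto-\eta_0$ is valid, and it is exactly the paper's choice $\theta=\pi-\omega$ when $\cos 2\omega<0$. Your own formula already settles the sign issue you call the main obstacle: for $\theta=-\omega$ it gives $\nabla_\eta{\rm Re}\,A=-\frac{8q(q+\beta-3)}{q-\beta+3}\rho^3\cos(2\omega)\,x/|x|$. No fallback is needed.
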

\proof
Let us write $x=|x|(\cos\omega ,\sin\omega)$. We then have
\be
\sin(2\omega) = \frac{\gamma}{\mu} \; , \qquad   \cos^2(2\omega)= \frac{2q}{q+\beta+1}.
\la{id9}
\ee
We consider a point $\eta_0=|\eta_0|(\cos\theta ,\sin\theta)$ for which
we assume that
\be
|\cos\theta|=| \cos\omega|\; , \quad |\sin\theta|= |\sin\omega|
\; , \quad \cos\theta  \sin\theta =-\cos\omega \sin\omega \, .
\la{oik}
\ee
Such a point $\eta_0$ satisfies (i) of Lemma \ref{lem:E2}. Hence if $\xi_0$
is taken to satisfy (ii) of the same lemma, then $(\pm\xi_0 , \eta_0)$
are extremal points for inequality \eqref{g:pol3}. We will show that
$\eta_0$ can be chosen so that $(\pm\xi_0 , \eta_0)$ are also
critical points of $\phi$.

Now, writing $z=\xi+ i\eta$ we have $\phi(z)=u(\xi,\eta)
+iv(\xi,\eta)$ where
\[
u(\xi,\eta) = -x\cdot \eta - \frac14
 \, {\rm Re}\, A(\xi +i\eta) \; , \qquad  v(\xi,\eta) = x\cdot \xi 
 - \frac14 \,  {\rm Im}\, A(\xi +i\eta).
\]
From Lemma \ref{lem:E2} follows immediately that
$u_{\xi_j}(\pm\xi_0,\eta_0)=0$, $ j=1,2$. Moreover, by Lemma \ref{lem:5.4} we have
\begin{align*}
&  u_{\eta_1}(\pm\xi_0,\eta_0) 
=- x_1  +\frac{ 2 \, q \, (q+\beta-3)}{ q-\beta +3} \,  |\eta_0|^3 \cos\theta  \cos(2\theta) \\
&  u_{\eta_2}(\pm\xi_0,\eta_0) 
=- x_2 - \frac{ 2 \, q \, (q+\beta-3)}{ q-\beta +3} \,  |\eta_0|^3 \sin\theta  \cos(2\theta) .
\end{align*}
Noting that by \eqref{oik} we have $\cos(2\theta)=\cos(2\omega)$,
we conclude that for $(\pm\xi_0,\eta_0)$ to be critical points we must have
\be
\left\{ 
\begin{array}{l}
 \dfrac{ 2 \, q \, (q+\beta-3)}{ q-\beta +3} \,  |\eta_0|^3 \cos\theta  \cos(2\omega) = |x|\cos\omega 
 \\[0.3cm]
 \dfrac{ 2 \, q \, (q+\beta-3)}{ q-\beta +3} \,  |\eta_0|^3 \sin\theta  \cos(2\omega) =-|x|\sin\omega .
\end{array}
\right.
\la{777}
\ee
Hence we choose $\eta_0$ by
\be
|\eta_0|^3 =  \frac{q-\beta +3}{ 2 \, q \, (q+\beta-3) \; |\cos(2\omega)|} \, |x|
\la{g:agg}
\ee
and
\[
\theta =\darr{ -\omega ,}{ \mbox{ if }\cos(2\omega)>0,}
{\pi -\omega  ,}{ \mbox{ if }\cos(2\omega)<0.}
\]
Then \eqref{777} are valid; this completes the proof.
\endproof

We can now prove the main asymptotic formula.
We define
\[
\sigma_{\beta,\gamma} =
\bigg(\frac{ ( q -\beta +3)^2}{q  \; (q +\beta -3)}
\bigg)^{\frac13} \sigma_2.
\]

\begin{theorem}
Let $(\beta ,\gamma)\in E_2$ and
assume that $x\in\R^2\setminus\{0\}$ satisfies $\gamma x_1^2 - 2\mu   x_1 x_2 +\gamma x_2^2 =0$. Then there holds
\begin{align*}
\hspace{-2.5cm}G(x,t)  = & \;  \frac{2^{\nicefrac13}}{3^{\nicefrac12}\pi} \cdot
\frac{ q^{\nicefrac16}(q+\beta-3)^{\nicefrac16}}{
(q -\beta+3)^{\nicefrac13}(3q+\beta-3)^{\nicefrac12} } \\
& \hspace{1.5cm} \times
\frac{1}{p(x)^{\nicefrac23} \, t^{\nicefrac13}} 
\exp\Big\{\! -\sigma_{\beta,\gamma} \frac{p(x)^{\nicefrac43}}{t^{\nicefrac13}}\Big\}
\big( 1+O(t^{\nicefrac13}) \big)
\end{align*}
as $t\to 0+ \,$.
\la{thm:E2}
\end{theorem}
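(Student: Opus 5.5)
We follow the scheme of the proof of Theorem \ref{thm:E1}. By \eqref{g:GF} it suffices to find the asymptotics of $F(\lambda)=\int_{\R^2}e^{\lambda\phi(z)}dz$ as $\lambda\to+\infty$. By the scaling relation $G(sy,t)=s^{-2}G(y,t/s^4)$, and since the condition $\gamma x_1^2-2\mu x_1x_2+\gamma x_2^2=0$ is homogeneous, we may fix $|x|=1$ on one of the two lines it defines. We write $x=(\cos\omega,\sin\omega)$, so that \eqref{id9} holds.

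\textbf{Choice of contour and saddle points.} Lemma \ref{g:agg1} provides $\eta_0$, with $|\eta_0|$ given by \eqref{g:agg} and $\theta=-\omega$ or $\pi-\omega$, together with $\xi_0=\pm r(\eta_{0,2},\eta_{0,1})$, $r$ as in \eqref{g:bo1}, such that $z_0^\pm=\pm\xi_0+i\eta_0$ are critical points of $\phi$ and $(\pm\xi_0,\eta_0)$ are extremal for \eqref{g:pol3}. By Cauchy's theorem we shift the contour to $\R^2+i\eta_0$ as in \eqref{g:cauchy}; the decay condition \eqref{g:cond2} holds trivially because $-\frac14{\rm Re}\,A$ dominates on this plane.

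The key condition is the analogue of \eqref{g:cond3}. Since $x\cdot\eta_0$ is constant on the plane, ${\rm Re}\,\phi(z)\le{\rm Re}\,\phi(z_0^\pm)$ is equivalent to ${\rm Re}\,A(\xi+i\eta_0)\ge-\kbg A(\eta_0)$, with equality exactly at $\xi=\pm\xi_0$. This is precisely the content of Lemma \ref{lem:E2}, because $\eta_0$ satisfies its condition (i). Also $\phi(z_0^+)=\phi(z_0^-)$, since $A$ is even and $x\cdot\xi$ flips sign while ${\rm Im}$ parts cancel (more precisely $\phi(-\bar z)=\overline{\phi(z)}$; we must check $\phi(z_0^\pm)$ is real). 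Criterion 1 of \cite{EP1} then gives $F(\lambda)={\rm contr}(z_0^+)+{\rm contr}(z_0^-)$ up to a factor $1+O(1/\lambda)$.

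\textbf{Computation.} We compute $\phi(z_0^\pm)$ and $\det(\phi_{z_iz_j})(z_0^\pm)$ explicitly. By Lemma \ref{g:agg3} with $r$ from \eqref{g:bo1}, ${\rm Im}\,A(\xi_0+i\eta_0)$ is proportional to $2\gamma+(\beta+1)\sin 2\theta+2\gamma\sin^22\theta$, which vanishes by Lemma \ref{lem:5.3}(i). Hence, using Euler's relation $z\cdot\nabla A=4A$ at a critical point,
\[
\phi(z_0)=\tfrac34\, i\,x\cdot z_0=-\tfrac34\,x\cdot\eta_0 .
\]
We also have $x\cdot\eta_0=|\eta_0|\,|\cos 2\omega|$ because $\theta=\mp\omega$. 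Combining \eqref{g:agg}, \eqref{id9} and \eqref{g:ser2} expresses $\phi(z_0)$ as $-\frac{\sigma_{\beta,\gamma}}{\sigma_2}\cdot\frac{3}{8}\,p(x)^{4/3}$ after rescaling, which with \eqref{g:GF} gives the exponent $-\sigma_{\beta,\gamma}p(x)^{4/3}t^{-1/3}$. The Hessian determinant is computed from \eqref{g:fcr2} at $z_0$, using $\sin2\theta$ from \eqref{g:sif} and $r^2$ from \eqref{g:bo1}. We expect it to be real and positive, producing the factor $(3q+\beta-3)^{1/2}$ in the prefactor. Summing the two equal contributions, applying \eqref{g:GF}, and rewriting $|x|$ via \eqref{g:ser2} yields the stated constant.

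\textbf{Main obstacle.} The heavy step is the algebraic simplification of the Hessian determinant at $z_0$ (and its sign/branch of square root) into the closed form in terms of $q$. I would attempt it by parametrizing $\eta_0$ by $\theta$, expressing everything through $\sin2\theta=-4\gamma/(q+\beta+1)$ and $\gamma^2=((\beta+1)^2-q^2)/16$, and eliminating $\gamma$ in favour of $q$. The other case, the second line, follows either by the same computation or by Lemma \ref{lem:symmetry}, as in Case 2 of Theorem \ref{thm:E1}.
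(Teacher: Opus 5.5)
Your proposal follows the paper's proof essentially step for step: the same shift to $\R^2+i\eta_0$, the same saddle points $z_0^\pm$ from Lemma \ref{g:agg1}, the same use of Lemma \ref{lem:E2} for the dominance condition, and the same final conversion via \eqref{g:GF} and \eqref{g:ser2}. The only difference is cosmetic: you evaluate $\phi(z_0^\pm)=-\frac34\,x\cdot\eta_0$, whereas the paper uses $-\frac34\kbg A(\eta_0)$ with \eqref{g:arx}. The Hessian you leave as "expected" is indeed real and positive: the paper gets $\det(D^2\phi)|_{z_0^\pm}=\frac32(3q+\beta-3)\big(\frac{q-\beta+3}{(q+\beta-3)(q+\beta+1)}\big)^{1/3}$ from \eqref{g:hessian}.

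One small correction concerns the second line. Lemma \ref{lem:symmetry} replaces $\gamma$ by $-\gamma$, so it does not map one of the two lines to the other for fixed $\gamma$; the invariance of $A$ under $\xi_1\leftrightarrow\xi_2$ does. In any case no reduction is needed, since Lemma \ref{g:agg1} already covers every such $x$. Also, the choice of $\theta$ is $-\omega$ or $\pi-\omega$, not $\mp\omega$, though your formula $x\cdot\eta_0=|\eta_0|\,|\cos 2\omega|$ is correct.
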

\proof
Similarly to the proof of Theorem \ref{thm:E1} we may assume that $|x|=1$
and write $x=(\cos\omega,\sin\omega)$. We consider the
function $F(\lambda)$ given by \eqref{fl}
and we are interested in the asymptotic behavior of $F(\lambda)$ as $\lambda\to +\infty$.
So the phase function
is
\be
\phi(z) = i(z_1 \cos\omega  + z_2 \sin\omega ) -\frac14
\big(z_1^4+4 \gamma z_1^3 z_2+ 2\beta z_1^2z_2^2+4 \gamma z_1z_2^3+z_2^4\big).
\la{g:pde}
\ee
We shall prove that the critical points $z_0^{\pm}=\pm\xi_0+i\eta_0$ of
Lemma \ref{g:agg1} are those
that contribute to the asymptotic behavior of $F(\lambda)$.

Arguing as in the proof of Theorem \ref{thm:E1}, we need to establish that
\be
{\rm Re}\, \phi(z) \leq {\rm Re}\,\phi(z_0^\pm), \qquad z\in \mathbb{R}^2+i\eta_0 \, ,
\la{g:agg2}
\ee
with equality exactly at the points $z_0^\pm$. Since $(\pm\xi_0,\eta_0)$
are extremal for Lemma \ref{lem:E2}, we have
\[
{\rm Re}\, A(\xi +i\eta_0) \geq - \kbg A(\eta_0)  \; , \qquad
\xi\in\R^2,
\]
with equality only if $\xi=\pm\xi_0$. It follows that
\[
{\rm Re}\, A(\xi +i\eta_0) \geq {\rm Re}\, A(\xi_0 +i\eta_0)  \; , \qquad
\xi\in\R^2,
\]
with equality if and only if $\xi=\pm\xi_0$. Hence \eqref{g:agg2}
follows.

It follows that the asymptotic behavior of $F(\lambda)$ as
$\lambda\to +\infty$
is given as the sum of two contributions, one from $z_0^+$ and one from $z_0^-$. The two contributions are given by
\[
{\rm contr}(z_0^{\pm}) =  \frac{2\pi}{\lambda} 
\Big( \det(D^2\phi) \Big|_{z=z_0^{\pm}} \Big) ^{-\frac12}
e^{\lambda\phi(z_0^{\pm})}.
\]
By \eqref{g:agg}  and \eqref{id9}
\eqref{g:beli} and  we have
\be
|\eta_0|^4   =   \frac{ (q+\beta +1)^{\nicefrac23}}{4 \, q^2}\Big(  \frac{ q-\beta+3}{q+\beta-3}\Big)^{\nicefrac43}.
\la{g:modeta}
\ee
Hence, recalling also (ii) of Lemma \ref{lem:5.3} we obtain
\begin{align}
A(\eta_0) \;  = \; & \frac{ q\, (q-\beta+3)}{2(q+\beta+1)} 
\times \frac{ (q+\beta+1)^{\nicefrac23}}{4\, q^2} \,
\bigg( \frac{q-\beta+3}{q+\beta-3}\bigg)^{\nicefrac43} \nonumber \\
\;  = \; &  \frac{ (q-\beta+3)^{\nicefrac73}}{8\, q \; 
(q+\beta-3)^{\nicefrac43}(q+\beta+1)^{\nicefrac13}}.
\la{g:arx}
\end{align}

Now, since $z_0^\pm$ is a critical point of $\phi$,
we have $ix =(1/4)\nabla A(z_0^\pm)$. Hence by Euler's theorem 
for homogeneous functions we obtain
\[
\phi(z_0^\pm) = i z_0^\pm\cdot x -\frac14 A(z_0^\pm)
=z_0^\pm \cdot \frac14 \nabla A(z_0^\pm) - \frac14 A(z_0^\pm)
=\frac34 A(z_0^\pm).
\]

We recall that $\eta_0=|\eta_0|(\cos\theta ,\sin\theta)$.
By Lemma \ref{g:agg3} and (i) of Lemma \ref{lem:5.3} we obtain that the imaginary part of $A(z_0^\pm)$ is zero. 
Hence, applying Lemma \ref{lem:E2} we obtain
\[
A(z_0^\pm)  =   {\rm Re} \; A(z_0^\pm)  
 =  -\kbg A(\eta_0).
\]
Combining the above we arrive at
\[
\phi(z_0^\pm) =- \, \frac34 \; \bigg( \frac{ q-\beta +3}{ (q+\beta+1)
(q+\beta-3)}\bigg)^{\frac13}.
\]

Concerning the Hessian of $\phi$ we first compute
\be
\det(D^2\phi) =
3(\beta-3\gamma^2)(z_1^4+z_2^4)-6\gamma (\beta-3) z_1z_2(z_1^2+z_2^2)
+3(6\gamma^2+3-\beta^2)z_1^2z_2^2.
\la{g:hessian}
\ee
Taking in particular $z\in\C^2$ to be of the form
$\pm r(\eta_2,\eta_1) +i(\eta_1,\eta_2)$ (which is the case for
an extremal point for Lemma \ref{lem:E2}) gives
\begin{align*}
\det(D^2\phi) \, = \; & 3\Big[ (\beta-3\gamma^2)(r^4+1)+
(\beta^2-6\gamma^2-3 )r^2\Big] (\eta_1^4+\eta_2^4) \\
& -6\gamma(\beta-3)(r^4-6r^2+1)\eta_1\eta_2(\eta_1^2+ \eta_2^2) \\
& +3\Big[ (6\gamma^2+3-\beta^2)(r^4+1) +4(\beta^2-3\beta-3 
+3\gamma^2)r^2\Big] \eta_1^2\eta_2^2 \\
& -6 \,i \, (\beta-3)(r^3-r)\bigg\{ \gamma (\eta_1^4 +\eta_2^4) 
+(\beta+1)\eta_1\eta_2(\eta_1^2+\eta_2^2)
 +6\gamma \eta_1^2\eta_2^2\bigg\}.
\end{align*}
We now take $\eta=\eta_0$.
The imaginary part of $\det(D^2\phi)$ is equal to
$\gamma  +\frac{\beta+1}{2}\sin(2\theta)+\gamma^2\sin^2(2\theta)$
and therefore vanishes by (i) of Lemma~\ref{lem:5.3}.
After some computations we find
\begin{align*}
\det(D^2\phi) \, = \; & |\eta_0|^4 \, \bigg\{
3\Big[ \, (\beta-3\gamma^2)(r^4+1)+
(\beta^2-6\gamma^2-3)r^2\Big]  \\
& -3\gamma(\beta-3)(r^4-6r^2+1) \sin(2\theta)
+\Big[ \Big( \frac94 +9\gamma^2 - \frac34 \beta^2-\frac32 \beta
\Big)(r^4+1) \\
&  +\Big( \frac32 \beta^2 -9\beta-\frac92
+18\gamma^2\Big)r^2   \Big]\sin^2(2\theta) \bigg\}.
\end{align*}
Substituting for $r^2$ and $\sin(2\theta)$ using \eqref{g:bo1} and 
\eqref{g:sif} respectively we arrive after some simplifications at
\[
\det(D^2\phi)\Big|_{z=z_0^\pm} = \frac32 (3q+\beta-3)
\bigg( \frac{q-\beta+3}{(q+\beta-3)(q+\beta+1)}\bigg)^{\frac13}.
\]

Hence the two contributions are equal and therefore
\begin{align}
  F(\lambda) \; \sim \; &
 {\rm contr}(z_0^+)+ {\rm contr}(z_0^-) \nonumber \\
\; = \; &  \frac{4\pi}{\lambda}\left[ \det(\phi_{z_iz_j})\mid_{z=z_0^+}\right]^{-\frac{1}{2}} \exp\lbrace \lambda \phi(z_0^+)
\rbrace  
\nonumber\\
\; = \; & \frac{4\sqrt{2} \, \pi}{\sqrt{3}} \;
\frac{ (q+\beta-3)^{\nicefrac16}(q +\beta+1)^{\nicefrac16}}
{(q -\beta+3)^{\nicefrac16}(3q+\beta-3)^{\nicefrac12} } \nonumber \\
 & \;\;  \times \; \frac{1}{\lambda} \, 
\exp\Big\{
- \, \frac34 \; \Big( \frac{ q-\beta +3}{ (q+\beta+1)
(q+\beta-3)}\Big)^{\nicefrac13} \lambda
\Big\}.
\la{clim}
\end{align}

Applying Lemma \ref{g:ser1} we find that
\[
\lambda=\frac{1}{(4t)^{\nicefrac13}} =\frac12 \, \bigg(
\frac{ (q+\beta+1)(q-\beta+3)}{4q}\bigg)^{\nicefrac13}
\frac{p(x)^{\nicefrac43}}{t^{\nicefrac13}}
\]
and therefore
\[
\exp\Big\{
- \, \frac34 \; \bigg( \frac{ q-\beta +3}{ (q+\beta+1)
(q+\beta-3)}\bigg)^{\nicefrac13} \lambda
\Big\} =
\exp\Big\{\! -\sigma_{\beta,\gamma} \frac{p(x)^{\nicefrac43}}{t^{\nicefrac13}}\Big\}.
\]
We  similarly find that
\[
\frac{1}{\lambda} =2^{\nicefrac76} \bigg( \frac{q}{(q+\beta+1)
(q-\beta+3)}\bigg)^{\nicefrac16}\frac{t^{\nicefrac13}}{p(x)^{
\nicefrac23}}.
\]
We  thus conclude that
\begin{align*}
& G(x,t) \; = \;     \frac{1}{(2\pi)^2 (4t)^{\nicefrac{2}{3}}}
F(\lambda )  \\
\; = \; & \frac{2^{\nicefrac13}}{3^{\nicefrac12}\pi} 
\frac{ q^{\nicefrac16}(q+\beta-3)^{\nicefrac16}}{
(q -\beta+3)^{\nicefrac13}(3q+\beta-3)^{\nicefrac12} } 
\frac{1}{p(x)^{\nicefrac23}\, t^{\nicefrac13}} 
\exp\Big\{\! -\sigma_{\beta,\gamma} \frac{p(x)^{\frac43}}{t^{\frac13}}\Big\}
\big( 1+O(t^{\nicefrac13}) \big)
\end{align*}
as required.
\endproof

\section{The case $(\beta,\gamma) \in L_2$}

We recall that $(\beta,\gamma) \in L_2$ if and only if
\[
\beta = 3 \quad  \mbox{ and } \quad   \gamma^2 <\frac{\beta-1}{2}.
\]
Similarly to the case 
$(\beta,\gamma)\in L_1$, there are now four relevant saddle points for 
each of the  points $x\in\R^2$ that we consider.

We define the constants
\[
q=4\sqrt{1-\gamma^2}  \; , \qquad \mu =1+ \sqrt{1-\gamma^2}.
\]
These are the same constants as those of
 Section \ref{sec:E2}, cf.
\eqref{q}, extended by continuity for $\beta=3$.

\begin{lemma}
\la{lem:L2}
Assume that $(\beta ,\gamma)\in L_2$. There holds
\be
\la{g:pol31}
{\rm Re}\, A(\xi +i\eta) \geq - 8 A(\eta)  \; , \qquad
\xi, \; \eta\in\R^2.
\ee
Moreover the constant 8 is the best possible and equality 
holds  precisely for the points $(\xi,\eta)\in $ such that
\begin{align*}
{\rm (a)} & \hspace{1cm} \ia    \mbox{ the point $\eta\in\R^2$ satisfies $
\gamma\eta_1^2 +2\mu\eta_1\eta_2 +\gamma\eta_2^2 =0$} 
\nonumber \\
& \;\, \mbox{ and }\ib  \mbox{ the point $\xi$ is one of 
the two points $\pm (\sqrt{3}\eta_2 ,\sqrt{3}\eta_1 )$} \\
\mbox{ or  } \hspace{0.6cm} 
 {\rm (b)} & \qquad\quad  \xi =\pm \sqrt{3} \, \eta.
\end{align*}
\end{lemma}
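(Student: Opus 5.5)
The proof will mirror the proof of Lemma \ref{lem:L1}. Since $L_2$ lies on the common boundary of $E_0$ and $E_2$, the argument splits into a continuity step for the inequality and an algebraic step that identifies the equality cases.

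\emph{Inequality and sufficiency.} Points $(3,\gamma)$ with $\gamma^2<1$ are limits of points of $E_0$, since $\gamma^2<(\beta^2+3\beta)/18$ becomes $\gamma^2<1$ at $\beta=3$. Hence \eqref{g:pol31} follows from Lemma \ref{lem:sc} by letting $\beta\to 3-$. These points are also limits of points of $E_2$, and as $\beta\to 3+$ we have $q\to 4\sqrt{1-\gamma^2}$. Then $\kbg=8q(q+\beta-3)/(q-\beta+3)^2\to 8$ and $r^2=(3q+\beta-3)/(q-\beta+3)\to 3$. Passing to the limit in the equality statement of Lemma \ref{lem:E2}, the points in (a) satisfy ${\rm Re}\,A(\xi+i\eta)+8A(\eta)=0$. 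The points in (b) give $Y(\xi,\eta)=0$ in \eqref{g:Y}, so they yield equality through the identity \eqref{g:be}, which holds for all $\beta,\gamma$. Sharpness then follows from the existence of either family.

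\emph{Necessity.} Suppose equality holds. By \eqref{g:be} we have $Y^TMY=0$, and $M$ is positive semidefinite by continuity. At $\beta=3$,
\[
M=\begin{bmatrix}1&\gamma&1\\ \gamma&1&\gamma\\ 1&\gamma&1\end{bmatrix},
\]
which has determinant $0$. Its $2\times 2$ leading minor is $1-\gamma^2>0$, so the kernel is one-dimensional, spanned by $[1,0,-1]^T$. Therefore either $Y=0$, which is case (b), or $Y$ is a nonzero multiple of $[1,0,-1]^T$. The latter means
\[
\xi_1\xi_2=3\eta_1\eta_2,\qquad \xi_1^2-3\eta_1^2=-(\xi_2^2-3\eta_2^2)\neq 0 .
\]
It remains to show that these conditions force (a). Put $\xi=\sqrt3\,\zeta$. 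The conditions become $\zeta_1\zeta_2=\eta_1\eta_2$ and $\zeta_1^2+\zeta_2^2=\eta_1^2+\eta_2^2$. Hence $(\zeta_1\pm\zeta_2)^2=(\eta_1\pm\eta_2)^2$, so $\zeta=\pm\eta$ or $\zeta=\pm(\eta_2,\eta_1)$. The option $\zeta=\pm\eta$ is excluded because then $Y=0$. Thus $\xi=\pm\sqrt3(\eta_2,\eta_1)$ with $\eta_1^2\neq\eta_2^2$, which is exactly (a)(ii).

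\emph{Main obstacle.} The remaining step is to show that for such $\xi$, equality additionally forces (a)(i). Note that $Y\in\ker M$ already gives $Y^TMY=0$ by itself. So either the relation $\gamma\eta_1^2+2\mu\eta_1\eta_2+\gamma\eta_2^2=0$ emerges automatically, or the conditions are genuinely weaker and the statement should be read accordingly. I would check this directly. Substitute $\xi=\sqrt3(\eta_2,\eta_1)$ with $\beta=3$ into Lemma \ref{g:agg3} with $r^2=3$, and compute ${\rm Re}\,A+8A(\eta)$ as a function of $\theta$. If this quantity vanishes for every $\theta$, then the family in (a) really is all $\eta$ with $\xi=\pm\sqrt3(\eta_2,\eta_1)$. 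In that case the correct characterization is the $\ker M$ description, and (a)(i) should be matched against it, for instance by checking whether the $\mu$-condition is used only in the later saddle-point selection. I expect this reconciliation to be the delicate point; the rest is routine.
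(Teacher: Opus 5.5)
Your proof of the inequality, of sharpness, and of the sufficiency of (a) and (b) is essentially the paper's. The paper gets \eqref{g:pol31} by continuity from Lemma~\ref{lem:E2}, using $\kbg\to 8$ as $\beta\to 3$. It gets (a) from the equality case of that lemma, and (b) from the computations of Evgrafov et al.\ or from a kernel vector of $Q(\eta)$. Approaching from $E_0$ and using $Y=0$ in \eqref{g:be} is equivalent. In fact, at $\beta=3$ the matrix $M$ is itself positive semidefinite, so \eqref{g:be} gives the inequality with no limit at all. The paper's proof ends there and never proves the ``only if'' direction.

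Your necessity argument via $\ker M$ goes beyond the paper. It is correct, and it settles the question you leave open, in the negative. The converse of your computation is immediate. For $\xi=\pm\sqrt3\,(\eta_2,\eta_1)$ one has $Y(\xi,\eta)=3(\eta_2^2-\eta_1^2)\,[1,0,-1]^T\in\ker M$. So equality holds for \emph{every} $\eta\in\R^2$, and (a)(i) is not a necessary condition: the lemma as stated is false.

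A concrete counterexample: take $\beta=3$, $\gamma\neq 0$, $\eta=(1,0)$, $\xi=(0,\sqrt3)$. Then $A(\xi+i\eta)=A(i,\sqrt3)=-8+8\sqrt3\,\gamma\, i$, so ${\rm Re}\,A(\xi+i\eta)=-8=-8A(\eta)$. Yet (a)(i) reduces to $\gamma=0$, and $\xi\neq\pm\sqrt3\,\eta$. For $\gamma=0$, take $\eta=(1,2)$, $\xi=\sqrt3\,(2,1)$ instead. The direct check you propose via Lemma~\ref{g:agg3} confirms this. With $\beta=3$, $r^2=3$ and $s=\sin 2\theta$ it gives ${\rm Re}\,A(\xi+i\eta)=10(1+2\gamma s+s^2)-3(6+12\gamma s+6s^2)=-8(1+2\gamma s+s^2)=-8A(\eta)$, identically in $\theta$.

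So do not try to reconcile. State and prove the correct characterization: equality holds iff $\xi\in\{\pm\sqrt3\,\eta,\ \pm\sqrt3\,(\eta_2,\eta_1)\}$, with $\eta$ arbitrary. Your argument already establishes this.

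Your guess about the $\mu$-condition is also right. It only enters the subsequent theorem, through the choice of $\eta_0$ that makes $\pm\sqrt3\,(\eta_{0,2},\eta_{0,1})+i\eta_0$ critical points of $\phi$ for the given $x$. That theorem only needs one fact: for this fixed $\eta_0$, the minimizers of $\xi\mapsto{\rm Re}\,A(\xi+i\eta_0)$ are exactly the four points $\pm\sqrt3\,\eta_0$ and $\pm\sqrt3\,(\eta_{0,2},\eta_{0,1})$. These are distinct because $|\sin 2\theta|=|\gamma|/\mu<1$. This follows from the corrected statement, so the theorem itself is unaffected.
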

\proof The constant $\kbg$ of Lemma \ref{lem:E2} is equal
to 8 when $\beta=3$, hence \eqref{g:pol31} follows by continuity.

It remains to establish when exactly we have equality.  Using again Lemma \ref{lem:E2} we obtain that if the point $(\xi,\eta)$ satisfies
(a) then we do have equality. The fact that for $\xi=\pm\sqrt{3}\eta$
we have equality follows from the computations of \cite{EP1}
or by simplify verifying that the vector $[ 3\eta_1^2 \, 3\eta_1\eta_2
,3\eta_2^2 ,1]^{T}$ belongs in the kernel of the matrix $Q(\eta)$, cf.
proof of Lemma 1.2.

\begin{theorem}
Supppose that $(\beta,\gamma)\in L_2$. Assume that $x\in\R^2\setminus\{0\}$ satisfies $\gamma x_1^2 - 2\mu   x_1 x_2 +\gamma x_2^2 =0$. Then there holds
\begin{align*}
G(x,t) \; \sim \; & \;  \frac{2^{\nicefrac13}}{3 \, \pi \, q^{\nicefrac12}} 
\cdot \frac{1}{p(x)^{\nicefrac23}\, t^{\nicefrac13}} \,
\exp\Big\{ - \sigma_2 \frac{ p(x)^{\nicefrac43}}{t^{\nicefrac13}}\Big\} \\
& \quad \times  \left(  1+  \Big(  \frac{q+4}{2q}\Big)^{\nicefrac12}
\cos\bigg(  \sqrt{3}\,\sigma_2 \, \dfrac{p(x)^{\nicefrac{4}{3}}}{t^{\nicefrac{1}{3}}} -\frac{\pi}{3}\bigg) +O(t^{\nicefrac13})  \right)
\end{align*}
as $t\to 0+$.
\end{theorem}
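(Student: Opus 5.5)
By the scaling relation used in the proof of Theorem \ref{thm:E1}, we may take $|x|=1$ and write $x=(\cos\omega,\sin\omega)$. We then study $F(\lambda)$ in \eqref{fl}, with phase \eqref{g:pde} and $\beta=3$, imitating the $L_1$ argument: two of the saddle points come from the $E_2$ analysis by continuity, and two new ones from the strongly convex family.

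\emph{Step 1 (saddle points).} Let $\eta_0$ and $z_0^\pm=\pm\xi_0+i\eta_0$ be as in Lemma \ref{g:agg1}. Letting $\beta\to 3^+$ gives $r^2\to 3$ in \eqref{g:bo1} and $\kbg\to 8$. Lemma \ref{g:agg1} and Lemma \ref{lem:5.4} have continuous dependence on $(\beta,\gamma)$, so $z_0^\pm$ stay critical points of $\phi$, and they are extremal of type (a) in Lemma \ref{lem:L2}.

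On the same plane $\R^2+i\eta_0$, set $z_*^\pm=\pm\sqrt3\,\eta_0+i\eta_0$. These are extremal of type (b). To check that they are critical for $\phi$, I would verify $\nabla A(\sqrt3\eta+i\eta)=(1+i\sqrt3)^3\nabla A(\eta)=-8\nabla A(\eta)$. I also need to check that $\nabla A(\eta_0)$ is parallel to the vector required by the critical-point equations. Since $z_*^\pm=(1\pm i\sqrt3)\eta_0$ up to a rotation factor, the criticality condition becomes $ix=\tfrac14 (\pm\sqrt3+i)^3\nabla A(\eta_0)$, which amounts to $\nabla A(\eta_0)\parallel x$ with a matching modulus. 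This alignment is the delicate verification, and I expect it to use the identity $\cos(2\theta)=\cos(2\omega)$ and \eqref{oik}. If it fails, I would instead locate the type-(b) critical points $\pm\sqrt3\eta_*+i\eta_*$ with $\eta_*$ solving $\nabla A(\eta_*)\propto x$. I would then show $\eta_*=\eta_0$ by using Lemma \ref{lem:L2}: a type-(b) point on the plane of $\eta_0$ attains the same minimum value $-8A(\eta_0)$.

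\emph{Step 2 (dominance).} By Lemma \ref{lem:L2}, on $\R^2+i\eta_0$ we have ${\rm Re}\,A(\xi+i\eta_0)\ge -8A(\eta_0)$, with equality exactly at $\xi=\pm\xi_0,\pm\sqrt3\eta_0$. Together with \eqref{g:cauchy} and \eqref{g:cond2}, this lets Criterion 1 of \cite{EP1} apply with four contributing saddles.

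\emph{Step 3 (contributions).} For $z_0^\pm$, I would take the limit $\beta\to3$ in \eqref{clim}. With $q=4\sqrt{1-\gamma^2}$ this gives a real exponent $-\sigma_2 p(x)^{4/3}t^{-1/3}$; Lemma \ref{g:ser1} shows that $\sigma_{\beta,\gamma}\to\sigma_2$. For $z_*^\pm$, Euler's relation gives $\phi(z_*^\pm)=\tfrac34A(z_*^\pm)=\tfrac34(1\pm i\sqrt3)^4A(\eta_0)=-6(1\mp i\sqrt3)A(\eta_0)\cdot\tfrac{1}{\,}$, which suitably normalised equals $-\tfrac34 c\,e^{\mp i\pi/3}\cdot 2$. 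Its real part equals ${\rm Re}\,\phi(z_0^\pm)$, and its imaginary part produces the frequency $\sqrt3\sigma_2$.

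The Hessian determinant \eqref{g:hessian} is homogeneous of degree $4$, so $\det D^2\phi(z_*^\pm)=(1\pm i\sqrt3)^4\det D^2\phi|_{z=\eta_0}$, up to sign conventions. This gives the phase $e^{\pm 2\pi i/3}$, hence the $-\pi/3$ shift after taking the square root. Its modulus is $16\det D^2\phi(\eta_0)$, which I compute from \eqref{g:hessian} at $\beta=3$ using \eqref{g:sif} and $A(\eta_0)$ from \eqref{g:arx}.

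Adding conjugate pairs gives $2{\rm Re}$, i.e.\ a cosine. The ratio of its amplitude to the $z_0^\pm$ amplitude is $\big(\det D^2\phi(z_0)/|\det D^2\phi(z_*)|\big)^{1/2}$, which should simplify to $((q+4)/(2q))^{1/2}$. Finally I would convert via \eqref{g:GF} to obtain the prefactor $2^{1/3}/(3\pi q^{1/2})$; this matches the limit $\beta\to3$ of the constant in Theorem \ref{thm:E2}.

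The main obstacle is Step 1: showing that the type-(b) points lying on the same plane $\R^2+i\eta_0$ are genuinely critical points of $\phi$. After that, the hardest part is the algebra for the amplitude ratio $((q+4)/2q)^{1/2}$.
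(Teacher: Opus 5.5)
Your outline follows the paper's proof step for step: four saddles on $\R^2+i\eta_0$, Criterion 1 of \cite{EP1}, the $z_0^\pm$ pair as the $\beta\to3^+$ limit of \eqref{clim}, and homogeneity for $z_*^\pm$. \textbf{The gap is the final step.} The amplitude ratio $\big(\det D^2\phi(z_0^\pm)/|\det D^2\phi(z_*^\pm)|\big)^{1/2}$ does not ``simplify to'' $((q+4)/(2q))^{1/2}$; it is identically $1$.

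Here is why. At $\beta=3$, \eqref{g:hessian} reads $\det D^2\phi=\frac{9}{16}q^2(z_1+z_2)^2(z_1-z_2)^2$. Put $\tau=\eta_{01}+\eta_{02}$ and $\tau'=\eta_{01}-\eta_{02}$.
\begin{itemize}
\item At $z_*^\pm=(\pm\sqrt3+i)\eta_0$ one has $z_1+z_2=(\pm\sqrt3+i)\tau$ and $z_1-z_2=(\pm\sqrt3+i)\tau'$.
\item At $z_0^\pm=\pm\sqrt3(\eta_{02},\eta_{01})+i\eta_0$ one has $z_1+z_2=(\pm\sqrt3+i)\tau$ and $z_1-z_2=(\mp\sqrt3+i)\tau'$.
\end{itemize}
Since $(\sqrt3+i)(-\sqrt3+i)=-4$ and $(\pm\sqrt3+i)^4=16e^{\pm2\pi i/3}$, this gives $\det D^2\phi(z_0^\pm)=9q^2\tau^2\tau'^2$ and $\det D^2\phi(z_*^\pm)=9q^2\tau^2\tau'^2e^{\pm2\pi i/3}$. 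The moduli are equal whatever $\eta_0$ is.

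The paper reaches $((q+4)/(2q))^{1/2}$ only through a slip. It substitutes $\cos^2(2\omega)=4q^2/(q+4)^2$, whereas \eqref{g:cos2} at $\beta=3$ gives $2q/(q+4)$. With the correct value, $|\det D^2\phi(z_*^\pm)|=\frac{9}{2}q(q+4)^{-1/3}$, which is exactly the $\beta\to3$ limit of the $E_2$ Hessian.

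An independent check: for $\beta=3$ you have $A(\xi)=\frac{1+\gamma}{2}(\xi_1+\xi_2)^4+\frac{1-\gamma}{2}(\xi_1-\xi_2)^4$. So $F(\lambda)$ is $\frac12$ times a product of two one-dimensional integrals, each asymptotic to a constant times $\lambda^{-1/2}e^{-c_j\lambda}\cos(\sqrt3c_j\lambda-\pi/6)$. The product-to-sum formula weights the two pairs of saddles equally. The hypothesis $\gamma x_1^2-2\mu x_1x_2+\gamma x_2^2=0$ is exactly the condition $c_1=c_2$.

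So your argument, like the paper's, actually proves the formula with bracket $1+\cos\big(\sqrt3\sigma_2p(x)^{4/3}t^{-1/3}-\pi/3\big)+O(t^{1/3})$; everything else is unchanged. The coefficient $((q+4)/(2q))^{1/2}$ in the statement is correct only for $\gamma=0$, and no correct computation will produce it otherwise.

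\textbf{Step 1 is left open.} The paper merely asserts that $z_*^\pm$ are critical, and your fallback (that attaining the value $-8A(\eta_0)$ forces $\eta_*=\eta_0$) is not an argument. It closes in one line. Since $T(\xi,\eta)={\rm Re}\,A(\xi+i\eta)+8A(\eta)\ge0$, its full gradient vanishes at each of the four equality points on $\R^2+i\eta_0$. There $\nabla_\xi{\rm Re}\,A=0$ and $\nabla_\eta{\rm Re}\,A=-8\nabla A(\eta_0)$, so $\nabla_{(\xi,\eta)}{\rm Re}\,\phi=(0,\,-x+2\nabla A(\eta_0))$ at all four points. By the Cauchy--Riemann equations, criticality of $z_0^\pm$ then gives criticality of $z_*^\pm$, i.e.\ $x=2\nabla A(\eta_0)$, which is what $\frac14\nabla A((\pm\sqrt3+i)\eta_0)=2i\nabla A(\eta_0)$ requires. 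Note that $(\sqrt3+i)^3=8i$, not $(1+i\sqrt3)^3=-8$; several of your formulas have $1\pm i\sqrt3$ where $\pm\sqrt3+i$ is meant.

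\textbf{For Step 2}, do not rely on the ``precisely'' clause of Lemma \ref{lem:L2}. Its only-if direction is not proved in the paper, and it is false at $\beta=3$: $\xi=\pm\sqrt3(\eta_2,\eta_1)$ gives equality for every $\eta$. Instead use the identity
${\rm Re}\,A(\xi+i\eta)+8A(\eta)=\frac{1+\gamma}{2}\big(s^2-3\sigma^2\big)^2+\frac{1-\gamma}{2}\big(s'^2-3\sigma'^2\big)^2$,
with $s=\xi_1+\xi_2$, $s'=\xi_1-\xi_2$, $\sigma=\eta_1+\eta_2$, $\sigma'=\eta_1-\eta_2$. It shows directly that on $\R^2+i\eta_0$ equality holds exactly at your four points.
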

\proof We argue as in the proof of Theorem \ref{thm:E2}; in particular
we write $x=(\cos\omega ,\sin\omega)$ and
consider the functions $F(\lambda)$ and $\phi(z)$ defined by
\eqref{fl} and \eqref{g:pde} respectively.

Setting $\beta=3$, the points $z_0^{\pm}$ used in the proof of
Theorem \ref{thm:E2} are critical points of $\phi(z)$ by continuity.
The points
\[
z_*^+=  (\sqrt{3} +i)\eta_0 = 2e^{\frac{\pi i}{6}} \eta_0 \;
, \qquad  
z_*^-=  (-\sqrt{3} +i)\eta_0 = 2e^{\frac{5\pi i}{6}} \eta_0
\]
are also critical points.

Applying Lemma \ref{lem:L2} we see that these four points
are precisely the points where ${\rm Re \, \phi(z)}$ attains its
maximum when restricted  in $\R^2+i\eta_0$; hence it is these points
that contribute to the asymptotic behavior of $F(\lambda)$.

The contribution of the points $z_0^{\pm}$ is obtained by letting
$\beta\to 3+$ at the corresponding contribution of
Theorem \ref{thm:E2} (cf. \eqref{clim}), so 
\[
{\rm contr}(z_0^+)+ {\rm contr}(z_0^-) = \frac{4\sqrt{2} \; \pi}{3}
\cdot \frac{ (q+4)^{\nicefrac16}}{q^{\nicefrac12}} \cdot \frac{1}{\lambda}
\cdot
\exp\Big\{ -\frac34 \, \frac{1}{ (q+4)^{\nicefrac13}} \lambda\Big\}.
\]
Concerning the point $z_*^+ =2e^{ \frac{\pi i}{6}} \eta_0$
we first note that, by  \eqref{g:arx},
\[
A(\eta_0) =\frac{1}{8 \, (q+4)^{\frac13}}.
\]
Hence
\[
\phi(z_*^+)  =  \frac34 A(z_*^+) 
=\frac34 \cdot 16 \;  e^{\frac{2\pi i}{3}} A(\eta_0) 
=\frac32  \, \frac{e^{\frac{2\pi i}{3}}}{(q+4)^{\frac13}} \, .
\]
Now, to compute the Hessian at $z_*^+$ we first note that for
$\beta=3$ relation \eqref{g:hessian} becomes
\[
\det(D^2\phi) = 9(1-\gamma^2) (z_1^4 -2z_1^2z_2^2 +z_2^4)
= \frac{9}{16} \, q^2 \, (z_1^4 -2z_1^2z_2^2 +z_2^4). 
\]
Hence at the point $z_*^+ =2e^{ \frac{\pi i}{6}} \eta_0$,
\begin{align*}
\det(D^2\phi)\Big|_{z=z_0^+}
\; = \;   & 9 \, q^2
e^{ \frac{2\pi i}{3}} (\eta_{01}^4 -2\eta_{01}^2\eta_{02}^2 
+\eta_{02}^4) \\
\; = \;   & 9 \, q^2
e^{ \frac{2\pi i}{3}} |\eta_0|^4 \Big(
\cos^4\theta  -2\cos^2\theta \sin^2\theta+\sin^4\theta\Big)
\end{align*}
Setting $\beta=3$ in \eqref{g:modeta} we get
$|\eta_0|^4 = (q+4)^{\frac23}/4q^2$.
Moreover, by \eqref{g:cos2},
\[
\cos^4\!\theta  -2\cos^2\!\theta \;  \sin^2\!\theta+\sin^4\!\theta =\cos^2(2\theta)
= \cos^2(2\omega)=\frac{ 4 q^2}
{ (q+4)^2}.
\]
Hence
$\det(D^2\phi)|_{z=z_0^+} =9 \, q^2 (q+4)^{-\frac43}
e^{ \frac{2\pi i}{3}} $ and we conclude that
\begin{align*}
{\rm contr}(z_*^+) \; = \; & 2\pi 
\Big( 9 \, q^2 (q+4)^{-\frac43} 
e^{ \frac{2\pi i}{3}}\Big)^{-\frac12} \cdot
\frac{1}{\lambda} \cdot 
 \exp\Big\{\frac{3}{2} \frac{ e^{\frac{2\pi i}{3}}}{(q+4)^{\frac13}} \, \lambda\Big\}  \\
\; = \; & \frac{2\pi}{3}  \cdot 
 \frac{(q+4)^\frac23}{q}
  \cdot \frac{1}{\lambda} \cdot 
 \exp\Big\{ -\frac{3}{4\,  (q+4)^{\frac13}}\lambda\Big\}
 \exp\Big\{\Big( \frac{3\sqrt{3}}{4 \, (q+4)^{\frac13}}
 \lambda -\frac{\pi}{3}
 \Big) i\Big\}.
 \end{align*}
The contribution of $z_*^-$ is the complex conjugate of that
of $z_*^+$, hence
\begin{align*}
{\rm contr}(z_*^+)+ {\rm contr}(z_*^-)\;  = \; & 
\frac{4\pi}{3} \cdot  \frac{(q+4)^\frac23}{q}
\cdot \frac{1}{\lambda} \cdot
\exp\Big\{ -\frac{3}{4\,  (q+4)^{\frac13}}\lambda\Big\} \\
& \quad \times \cos\Big(  \frac{3\sqrt{3}}{4 \, (q+4)^{\frac13}}
 \lambda -\frac{\pi}{3}\Big).
\end{align*}
Adding up all contributions we arrive at
\begin{align*}
F(\lambda) \; = \; & \;  \frac{4\sqrt{2} \; \pi}{3}
\cdot \frac{ (q+4)^{\nicefrac16}}{q^{\nicefrac12}} \cdot \frac{1}{\lambda}
\cdot
\exp\Big\{ -\frac34 \, \frac{1}{ (q+4)^{\nicefrac13}} \lambda\Big\} \\
& \quad \times  \bigg(  1+  \Big(  \frac{q+4}{2q}\Big)^{\nicefrac12}
\cos\Big(  \frac{3\sqrt{3}}{4 \, (q+4)^{\frac13}}
 \lambda -\frac{\pi}{3}\Big) +O(\frac{1}{\lambda}) \bigg).
\end{align*}
By \eqref{g:ser2} we have
\[
p(x) =\Big(\frac{ 8 }{q+4}\Big)^{\frac14} ,
\]
hence substituting $\lambda=1/(4t)^{1/3}$ we find that
\[
\frac{\lambda}{(q+4)^{\nicefrac13}} = \frac{1}{2 \cdot 4^{\nicefrac13}} \, 
\frac{p(x)^{\nicefrac43}}{t^{\nicefrac13}}.
\]
This we substitute in the exponential and in the cosine. 
Similarly we have
\[
\frac{1}{\lambda} =\frac{2^{\nicefrac76} }{(q+4)^{\nicefrac16}} 
\frac{t^{\nicefrac13}}{p(x)^{\nicefrac23}}  ,
\]
which we substitute for the factor preceding the exponential.
Recalling \eqref{g:GF} completes the proof.
\endproof

\

\section{Heat kernel estimates}

In this section we apply Lemmas \ref{lem:E1}
and \ref{lem:E2} in order to obtain Gaussian estimates with best
constant on the heat kernel of variable coefficient operators whose 
principal symbol is given by \eqref{pr:sy}.

Let $\Omega \subseteq \mathbb{R}^2$ be open and connected and consider a differential operator $H$ on $L^2(\Omega)$ (complex-valued functions) given formally by
$$
\begin{aligned}
Hu(x) = & \; \partial^4_{x_1}u + 2\partial^2_{x_1}\big( \gamma(x) \partial^2_{x_1x_2} u\big) + 2\partial^2_{x_1x_2}\left( \gamma(x) \partial^2_{x_1} u\right) 
+ 2\partial^2_{x_1x_2}\left(\beta(x)\partial^2_{x_1x_2}u\right) \\
&+ 2\partial^2_{x_2^2}\left( \gamma(x) \partial^2_{x_1x_2} u\right) + 2\partial^2_{x_1x_2}\left( \gamma(x) \partial^2_{x_2^2} u\right) + \partial^4_{x_2}u
\end{aligned}
$$
Here $\beta,\gamma\in L^{\infty}(\Omega)$ and in case $\Omega \neq \mathbb{R}^2$ we impose Dirichlet boundary conditions
on $\partial \Omega$.
The operator $H$ is defined by means of the sesquilinear form
\begin{align*}
Q(u,v) =& \int\limits_\Omega
\big\lbrace u_{x_1x_1}\overline{v_{x_1x_1}}
+  2\gamma(x) \big( u_{x_1x_2}\overline{v_{x_1x_1}} + u_{x_1x_1}\overline{v_{x_1x_2}} \big) \\
& + \, 2\beta(x)u_{x_1x_2}\overline{v_{x_1x_2}} +
2\gamma(x) \big( u_{x_1x_2}\overline{v_{x_2x_2}} +
 u_{x_2x_2}\overline{v_{x_1x_2}} \big) +
u_{x_2x_2}\overline{v_{x_2x_2}} \big\rbrace \, dx
\end{align*}
defined on $H^2_0(\Omega)$. 
We assume that $H$ is uniformly elliptic in the sense
that G\r{a}rding's inequality
\[
Q(u) \geq c_1 \| \Delta u\|_{L^2(\Omega)}^2 -c_2
 \|u\|_{L^2(\Omega)}^2 \; , \qquad u\in H^2_0(\Omega),
\]
is valid. This implies, cf. \cite[Theorem 7.12]{A},
that the associated symbol
\[
 A(x,\xi) =\xi_1^4 + 4\gamma(x) \,
 \xi_1^3 \, \xi_2 +  2\beta(x)\, \xi_1^2 \, \xi_2^2 + 
 +4\gamma(x) \, \xi_1^3 \, \xi_2 +\xi_2^4 \;  , \quad x\in\Omega , \; 
 \xi\in\R^2,
\]
satisfies
\[
A(x,\xi) \geq c  \, |\xi|^4
\]
for some $c>0$ and all $x\in\Omega$ and $\xi\in\R^2$.
The form $Q$ is then closed and Hermitian and we define
$H$ as the associated self-adjoint operator. As already mentioned, the 
operator $H$ has a continuous heat kernel $G(x,x',t)$ which satisfies the 
Gaussian estimate \eqref{gauss}.

The main theorem of this section contains a sharp version of 
\eqref{gauss}. In order to state it we need some definitions. Firstly,
we define the class of real-valued functions
$$
\mathcal{E}=\left\lbrace \varphi \in C^2(\Omega): 
\| D^{\alpha}\varphi \|_\infty <+\infty, \;\; 0 \leq |\alpha | 
\leq 2 \right\rbrace 
$$
(here standard multiindex notation is used for
$D^{\alpha}\varphi$); we also define the subclass
$$
\mathcal{E}_{A,M}=\{ \varphi \in \mathcal{E}: A(y,\nabla \varphi (y))\leq 1 \ , \; y \in \Omega \mbox{ and } 
\| D^{\alpha}\varphi \|_\infty \leq M \ , \ |\alpha | = 2 \}. 
$$
We next define for $M>0$  a distance $d_M( \cdot , \cdot )$ on $\Omega$ by
$$
d_M( x , x' ) = \sup \left\lbrace \varphi(x') - \varphi(x) : \varphi 
\in \mathcal{E}_{A,M} \right\rbrace.
$$
Finally, we denote by $D$ the distance in $L^\infty (\Omega)$ of the functions $\beta,\, \gamma$ from the space ${\rm Lip(\Omega)}$ of all Lipschitz continuous functions,
\[
D=\max\big\{d_{L^\infty} (\beta, {\rm Lip(\Omega)}) \, , \; d_{L^\infty} (\gamma, {\rm Lip(\Omega)})\big\}.
\]
Given the above, we can now state the following

\begin{lemma}
Assume that $k_0>0$ is such that
\[
{\rm Re \,}A(x,\xi +i\eta) \geq -k_0 A(x,\eta) \; , \qquad x\in\Omega\, , \;\; \xi,  \eta\in\R^2,
\]
and let $\sigma_0 =3\cdot 4^{-\nicefrac43} k_0^{-\nicefrac13}$. Then 
the following Gaussian estimate is satisfied: given $\epsilon$
and $M>0$ there exist $c_\epsilon,c_{\epsilon,M}<\infty$ such that
\[
|G(x,x',t)| \leq c_\epsilon t^{-\nicefrac{1}{2}}\exp\Big( -(\sigma_0-cD-\epsilon) \dfrac{d_M(x,x')^{\nicefrac{4}{3}}}{t^{\nicefrac{1}{3}}} +c_{\epsilon,M} t\Big)
\]
for all $x,x' \in \Omega$ and $t>0$.
\la{GB}
\end{lemma}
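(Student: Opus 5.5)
We follow Davies' exponential perturbation method as adapted to the higher-order, non-smooth-coefficient setting in \cite{Da95a,B2001,BB}. For $\varphi\in\mathcal{E}_{A,M}$ and $\alpha>0$ we consider the twisted semigroup $e^{\alpha\varphi}e^{-Ht}e^{-\alpha\varphi}$. Its generator $H_{\alpha\varphi}=e^{\alpha\varphi}He^{-\alpha\varphi}$ is associated with the form $Q_{\alpha\varphi}(u)=Q(e^{-\alpha\varphi}u,e^{\alpha\varphi}u)$ on $H^2_0(\Omega)$. The whole argument rests on a lower bound of the form
\[
{\rm Re}\,Q_{\alpha\varphi}(u)\geq -\big(k_0+cD+\epsilon\big)\alpha^4\|u\|_2^2 - c_{\epsilon,M}(1+\alpha^{2})\|u\|_2^2 ,
\]
and we first prove it for $u\in H^2_0(\Omega)$.

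\emph{Step 1: expanding the form.} Expanding $\partial^2(e^{\pm\alpha\varphi}u)$, the part of ${\rm Re}\,Q_{\alpha\varphi}(u)$ that is of top total order (four derivatives shared between $u$ and $\alpha\nabla\varphi$) has symbol ${\rm Re}\,A(x,\xi+i\alpha\nabla\varphi(x))$. This is because conjugation by $e^{\alpha\varphi}$ shifts $\xi\mapsto\xi+i\alpha\nabla\varphi$ at the principal level.

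The remaining terms contain either second derivatives of $\varphi$, which are bounded by $M$, or fewer powers of $\alpha$. By interpolation and the weighted Cauchy--Schwarz inequality these are bounded by $\epsilon(\|D^2u\|^2+\alpha^4\|u\|^2)+c_{\epsilon,M}(1+\alpha^2)\|u\|^2$. The $\epsilon\|D^2u\|^2$ term is absorbed using G\aa rding's inequality.

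\emph{Step 2: applying the pointwise hypothesis.} If $\beta,\gamma$ were constant, Plancherel together with the hypothesis would give
\[
\int {\rm Re}\,A(\xi+i\alpha\nabla\varphi)|\hat u|^2\geq -k_0\alpha^4\int A(\nabla\varphi)|\hat u|^2\geq -k_0\alpha^4\|u\|^2 ,
\]
using $A(\nabla\varphi)\leq1$. For variable coefficients the symbol is not a constant multiplier, and this is the main obstacle. We handle it as in \cite{B2001,BB}.

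First write $\beta=\beta_L+\beta_r$ and $\gamma=\gamma_L+\gamma_r$, where $\beta_L,\gamma_L$ are Lipschitz and $\|\beta_r\|_\infty,\|\gamma_r\|_\infty\leq D+\epsilon$. The $\beta_r,\gamma_r$ contributions are bounded crudely by $cD(\|D^2u\|^2+\alpha^4\|u\|^2)$. After absorbing the $\|D^2u\|^2$ part, this contributes $-cD\alpha^4$; this is the origin of the term $cD$ in the exponent.

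For the Lipschitz part, we realise the top-order quadratic expression as $\int X^*Q(x)X\,dx$. Here $X$ is the vector of $u_{x_ix_j}$, $\alpha\varphi_{x_i}u_{x_j}$ and $\alpha^2\varphi_{x_i}\varphi_{x_j}u$, and $Q(x)$ is a Hermitian matrix obtained from the hypothesis, in the spirit of the matrices $Q(\eta)$ of Lemmas~\ref{lem:E1} and~\ref{lem:E2}. Pointwise nonnegativity of ${\rm Re}A+k_0A$ alone does not give a sum of squares in this non-commutative setting. We therefore localise: take a partition of unity on cubes of side $\delta$, freeze $\beta_L,\gamma_L,\nabla\varphi$ on each cube, and apply the constant-coefficient Fourier argument there. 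The freezing errors are $O(\delta(\mathrm{Lip}+M))\alpha^4\|u\|^2$, and the commutators with the cutoffs are lower order with constants depending on $\delta$. Choosing $\delta$ small in terms of $\epsilon$ gives the displayed lower bound.

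\emph{Step 3: from the form bound to the Gaussian estimate.} The form bound yields
\[
\|e^{\alpha\varphi}e^{-Ht}e^{-\alpha\varphi}\|_{2\to2}\leq \exp\big((k_0+cD+\epsilon)\alpha^4t+c_{\epsilon,M}(1+\alpha^2)t\big).
\]
Combining this with the analogous $L^2\to L^\infty$ bounds, obtained via Sobolev embedding $H^2\subset L^\infty$ in two dimensions and the standard semigroup factorisation, we get
\[
|G(x,x',t)|\leq c_\epsilon t^{-1/2}\exp\big(\alpha(\varphi(x)-\varphi(x'))+(k_0+cD+2\epsilon)\alpha^4t+c_{\epsilon,M}t\big).
\]
We then take the supremum over $\varphi\in\mathcal{E}_{A,M}$, which produces $-\alpha d_M(x,x')$, and optimise in $\alpha$ by choosing $\alpha=(d/(4kt))^{1/3}$ with $k=k_0+cD+2\epsilon$. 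This gives the exponent $-3\cdot4^{-4/3}k^{-1/3}d^{4/3}t^{-1/3}$, which equals $-\sigma_0 d^{4/3}t^{-1/3}$ up to $O(D+\epsilon)$ corrections. Finally we relabel $c$ and $\epsilon$.
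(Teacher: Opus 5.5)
Your overall scheme is the one the paper invokes by citing \cite{B2001,BB}: twisting by $e^{\pm\alpha\varphi}$, a lower bound ${\rm Re}\,Q_{\alpha\varphi}(u)\geq -(k_0+cD+\epsilon)\alpha^4\|u\|^2-c_{\epsilon,M}\|u\|^2$, $L^2\to L^\infty$ bounds via factorisation of the semigroup, and optimisation in $\alpha$. Where you genuinely depart from it is in how the variable-coefficient form bound is obtained.

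In \cite{B2001,BB} there is no localisation. The bound is proved pointwise from a positive semidefinite (sum-of-squares) representation of ${\rm Re}\,A(\xi+i\eta)+kA(\eta)$. This is why the paper proves Lemmas \ref{lem:sc}, \ref{lem:E1} and \ref{lem:E2} in the matrix form $X(\xi)^TQ(\eta)X(\xi)$, with $X(\xi)=[\xi_1^2,\xi_1\xi_2,\xi_2^2,1]^T$ and $Q(\eta)\geq 0$ polynomial in $\eta$. One substitutes $\eta=\alpha\nabla\varphi(x)$, the coefficients at $x$, and $W=[u_{x_1x_1},u_{x_1x_2},u_{x_2x_2},u]^T$ for $X$. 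This gives an integrand $W^*QW\geq0$ pointwise, which agrees with ${\rm Re}\,Q_{\alpha\varphi}(u)+k\alpha^4\int A(x,\nabla\varphi)|u|^2$ up to lower-order terms and integrations by parts (e.g.\ $u_{x_1x_1}\overline{u_{x_2x_2}}$ versus $|u_{x_1x_2}|^2$). Those integrations by parts put derivatives on $\nabla\varphi$ and on the coefficients, which is exactly where the Lipschitz approximation, and hence $D$, enters. So the obstruction you cite is not the absence of a sum of squares; here one exists explicitly. The real issue is the need to differentiate the coefficients.

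Your partition-of-unity, freezing and Plancherel route needs no algebraic certificate. It uses only the scalar inequality that Lemma \ref{GB} literally assumes, and would even tolerate coefficients close to merely uniformly continuous ones. The price is $\delta$-dependent constants, plus two points your sketch passes over.

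\begin{itemize}
\item The frozen symbols $A_{\beta_L(x_j),\gamma_L(x_j)}$ are not covered by the hypothesis. You must compare them with $A(y_j,\cdot)$ at a nearby point, at a cost $O(D+\epsilon+\delta)(\|D^2u\|^2+\alpha^4\|u\|^2)$.
\item The sharp inequality has extremals with $\xi\neq0$, so it yields no positive multiple of $\|D^2u\|^2$, and ``absorbing via G\r{a}rding'' does not work as stated. The errors of that size (freezing, cutoffs, $\beta_r,\gamma_r$) must be absorbed through a slack such as ${\rm Re}\,A(\xi+i\eta)\geq\frac{\theta}{2}A(\xi)-\big((1-\theta)k_0+C\theta\big)A(\eta)$ with $\theta\approx C(D+\epsilon)$. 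This also keeps the factor multiplying $D$ independent of $\epsilon$. For $D$ not small the estimate is trivial from the on-diagonal bound.
\end{itemize}

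With these made explicit, your route is a valid alternative.
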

For the proof we refer to \cite{B2001} or \cite{BB} where the 
proof is divided into a number of steps; the precise form
of the operator or the value of $k_0$ plays no role (but the form
of the operator determines $k_0$).

Thinking of $D$ above as being small, our main theorem below involves an 
exponential constant which is a small pertubation of a sharp
constant $\sigma_*$. The constant $\sigma_*$ reduces to the constant
$\sigma_{\beta,\gamma}$ of Sections \ref{sec:E1} and \ref{sec:E2}
in the case of operators with constant coefficients. Hence to show
the sharpness it is enough to establish how the distances $d_M(x,x')$
are related to the Finsler norms of Sections \ref{sec:E1} and \ref{sec:E2}.
It is easy to see that, as $M \rightarrow +\infty $, the distance
$d_M(x,x')$ converges to the Finsler-type distance
$$
d( x , x' ) = \sup \{ \varphi(x') - \varphi(x) : \varphi \in {\rm Lip(\Omega)}, \;  A(y,\nabla \varphi (y))\leq 1 \ , \; y \in \Omega \}.
$$
We then have
\begin{lemma}
Assume that $H$ is a constant coefficient operator acting on
$L^2(\R^2)$. Then for any $x , x'\in\R^2$,
\[
d(x,x')= p(x-x') \; .
\]
\end{lemma}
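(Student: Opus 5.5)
The plan is to prove the two inequalities $d(x,x')\le p(x-x')$ and $d(x,x')\ge p(x-x')$ separately, using the duality between the Finsler norm $p$ and the "dual norm" $A(\xi)^{1/4}$. By the definition \eqref{fin}, $p(y)=\max_{\xi\ne0} y\cdot\xi/A(\xi)^{1/4}$. By homogeneity this equals $\max\{y\cdot\xi : A(\xi)\le 1\}$. So $p$ is the support function of the convex hull of the compact set $K=\{\xi : A(\xi)\le 1\}$. In particular $p$ is a norm: it is positively homogeneous and subadditive, being a supremum of linear functionals.

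\emph{Lower bound.} Fix $x,x'$ and let $\xi_*$ attain the maximum in the definition of $p(x'-x)$, normalized so that $A(\xi_*)=1$. Take the linear function $\varphi(y)=\xi_*\cdot y$. It is Lipschitz (indeed smooth, with all second derivatives zero), and $A(\nabla\varphi)=A(\xi_*)=1$. Hence it is admissible in the definition of $d$ (and even of $d_M$ for every $M$). Then
\[
\varphi(x')-\varphi(x)=\xi_*\cdot(x'-x)=p(x'-x),
\]
so $d(x,x')\ge p(x'-x)$. The paper writes $p(x-x')$; I will check symmetry. Since $A(-\xi)=A(\xi)$, we have $p(-y)=p(y)$, so the order of the arguments is immaterial.

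\emph{Upper bound.} Let $\varphi$ be Lipschitz on $\R^2$ with $A(\nabla\varphi(y))\le1$ a.e. I first treat $\varphi\in C^1$. Set $g(s)=\varphi(x+s(x'-x))$. Then
\[
\varphi(x')-\varphi(x)=\int_0^1\nabla\varphi(x+s(x'-x))\cdot(x'-x)\,ds\le \int_0^1 p(x'-x)\,ds=p(x'-x),
\]
since $\nabla\varphi\in K$ pointwise and $\xi\cdot y\le p(y)$ for every $\xi\in K$. For merely Lipschitz $\varphi$ the gradient exists only a.e., and a fixed segment may lie in the exceptional null set; I expect this to be the main (mild) obstacle.

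I would handle it by mollification. Put $\varphi_\delta=\varphi*\rho_\delta$. Then
\[
\nabla\varphi_\delta(y)=\int \nabla\varphi(y-w)\rho_\delta(w)\,dw
\]
is an average of points of $K$, so it lies in the closed convex hull $\widehat K$ of $K$. The linear bound $\xi\cdot y\le p(y)$ still holds for $\xi\in\widehat K$, because $p$ is the support function of $K$ and hence also of $\widehat K$. So the $C^1$ argument applies to $\varphi_\delta$, giving $\varphi_\delta(x')-\varphi_\delta(x)\le p(x'-x)$. Letting $\delta\to0$, and using uniform convergence of $\varphi_\delta\to\varphi$ for Lipschitz $\varphi$, yields the bound for $\varphi$. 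Taking the supremum over $\varphi$ gives $d(x,x')\le p(x'-x)$. It is important to work with $\widehat K$ rather than $K$ itself: in the non-strongly-convex cases $K$ need not be convex, and this is exactly why the convex-hull support function appears.
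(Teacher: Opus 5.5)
Your proof is correct and follows the same route as the paper. Linear test functions $\varphi(y)=\xi\cdot y/A(\xi)^{1/4}$ give $d\ge p$, and integrating $\nabla\varphi\cdot(x'-x)$ along the segment, together with $\xi\cdot y\le A(\xi)^{1/4}p(y)$, gives $d\le p$. Your mollification step, which uses that $p$ is also the support function of the closed convex hull of $\{A\le 1\}$, fills a point the paper passes over: for a merely Lipschitz $\varphi$, $\nabla\varphi$ need not exist anywhere on the chosen segment.
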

\proof Assuming with out loss of generality that $x'=0$, we must
prove that
\[
 \sup \{ \varphi(x) : \varphi \in {\rm Lip(\Omega)}, \;
\varphi(0)=0 , \;  A(\nabla \varphi (y))\leq 1 \ , {\rm a.e.}\; y \in \Omega \} = \max_{\xi \neq 0} \frac{x\cdot \xi}{ A(\xi)^{\nicefrac14}}.
\]
Let $\xi\in\R^2$. The function $\varphi(y) =(y\cdot \xi)
 /A(\xi)^{\nicefrac14}$ belongs to the set above, and this proves
that $d(x,0)\geq p(x)$.
For the converse, let $\varphi$ be a function in the above set. Then
\begin{align*}
\varphi(x) =& \; \int_0^1 \frac{d}{ds} \varphi(sx)\, ds 
= \int_0^1 \nabla\varphi(sx)\cdot x \, ds 
\leq   \; \int_0^1  A( \nabla\varphi(sx))^{\nicefrac14} p(x) \, ds \\
\leq  & \;  \int_0^1  p(x) \, ds = p(x);
\end{align*}
hence $d(x,0)\leq p(x)$, as required. 
\endproof

We now join together the definitions of $k_{\beta,\gamma}$
from the previous sections: given $(\beta,\gamma)\in E$ we define
\[
k_{\beta,\gamma} =\left\{
\begin{array}{ll}  
8, & \mbox{ if } (\beta,\gamma) \in E_0 \cup L_1 \cup L_2 \, ,\\[0.2cm]
 \dfrac{ 8\big( 1-\beta\big)+16\gamma^2}{\big(
\beta + 1 \big)^2-16\gamma^2}, & \mbox{ if } (\beta,\gamma) \in E_1 \, , \\[0.3cm]
\dfrac{8\, q\big(q+\beta-3\big)}{\big(q-\beta +3\big)^2}
, & \mbox{ if } (\beta,\gamma) \in E_2.
\end{array}
\right.
\]
where, we recall,  $q=\big( (\beta+1)^2-16\gamma^2 \big)^{\nicefrac12}$.

We then define on $\Omega$ the function $k(x) = k_{\beta(x),\gamma(x)}$.
We repeat the same procedure for the constants $\sigma_{\beta,\gamma}$
and arrive in the same way at a function $\sigma(x)$, $x\in\Omega$.
Finally, we set
\[
k_*=\sup\limits_{x\in \Omega}k(x)  \; \; , \qquad
\sigma_*=\inf\limits_{x\in \Omega}\sigma(x).
\]
The following is the main theorem of this section.
\begin{theorem}
For all $\epsilon \in (0,1)$ and all $M>0$ large there exist $c_\epsilon,c_{\epsilon,M}<\infty$ such that
\[
|G(x,x',t)| \leq c_\epsilon t^{-\nicefrac{1}{2}}\exp\left( -(\sigma_*-cD-\epsilon) \dfrac{d_M(x,x')^{\nicefrac{4}{3}}}{t^{\nicefrac{1}{3}}} +c_{\epsilon,M} t\right)
\]
for all $x,x' \in \Omega$ and $t>0$.
\begin{proof}
Let $x\in\Omega$. Depending on which one
of the sets $E_0,E_1,E_2,L_1,L_2$ the point $(\beta(x),\gamma(x))$
belongs to, we use one of Lemmas \ref{lem:E1}, \ref{lem:L1}, 
\ref{lem:E2} or \ref{lem:L1} and obtain that for
any $\xi,\eta\in\R^2$ we have
\[
{\rm Re \,}A(x,\xi +i\eta) \geq -k(x) A(x,\eta) 
\geq -k_* A(x,\eta).
\]
We now apply Lemma \ref{GB}. The result will follow if we establish
that $\sigma_* = 3\cdot 4^{-\nicefrac43} k_*^{-\nicefrac13}$. And for
this it is enough to show that that for any $(\beta,\gamma)\in E$
we have
\[
\sigma_{\beta,\gamma} = 3\cdot 4^{-\nicefrac43}
k_{\beta,\gamma}^{-\nicefrac13}.
\]
The last relation is easily seen to be true in all cases, thus completing
the proof.
\end{proof}
\end{theorem}

\newpage

The asymptotic formulas of the four main theorems follows from a
relation of the form $F(\lambda)\sim G(\lambda)$ for some explicitly given function $G(\lambda)$.
In each of the diagrams below we have plotted the numerically computed graph of $F(\lambda)e^{\kappa \lambda}$
(blue, dashed) against the function $G(\lambda)e^{\kappa \lambda}$ (red, continuous),
where $\kappa$ is the positive constant in the exponential factor of
$G(\lambda)$. 

\begin{figure*}[ht]
\centering
\begin{minipage}{.5\textwidth}
  \centering
  \includegraphics[width=1\linewidth]{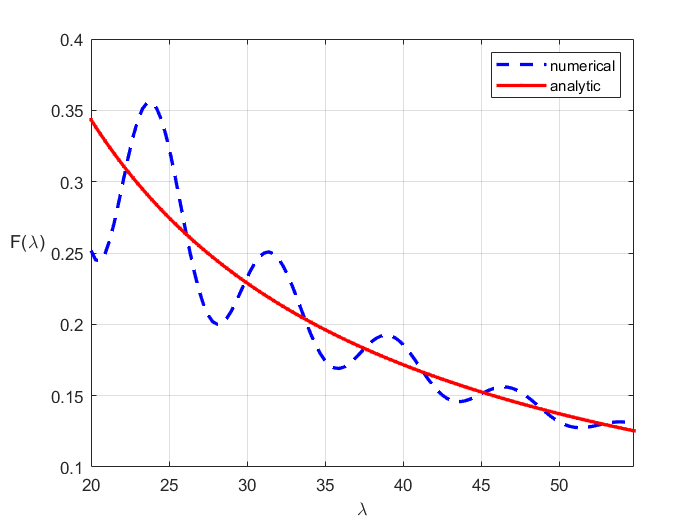}
  \caption{\newline $\beta=1$, $\gamma=12/25$}
\end{minipage}%
\begin{minipage}{.5\textwidth}
  \centering
  \includegraphics[width=1\linewidth]{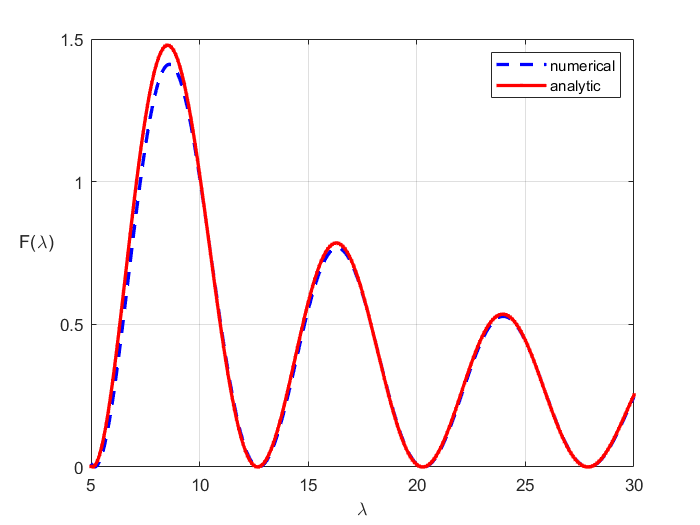}
  \caption{\newline $\beta=0$, $\gamma=\sqrt{2}/3$}
\end{minipage}
\end{figure*}

\begin{figure*}[ht]
\centering
\begin{minipage}{.5\textwidth}
  \centering
  \includegraphics[width=1\linewidth]{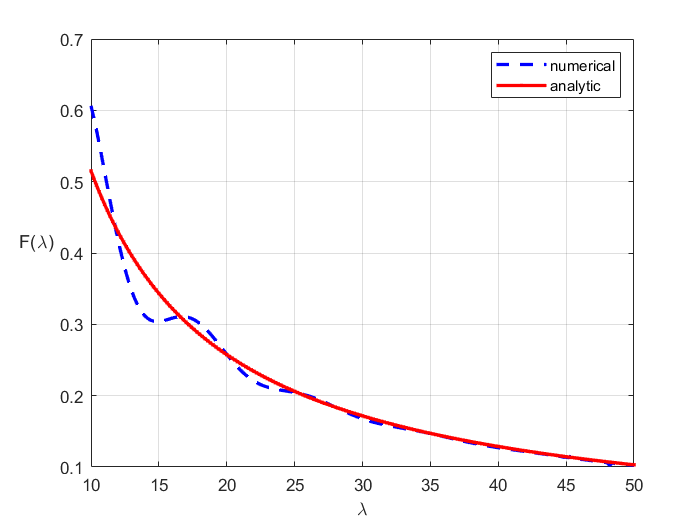}
  \caption{\newline $\beta=3$, $\gamma=1$}
\end{minipage}%
\begin{minipage}{.5\textwidth}
  \centering
  \includegraphics[width=1\linewidth]{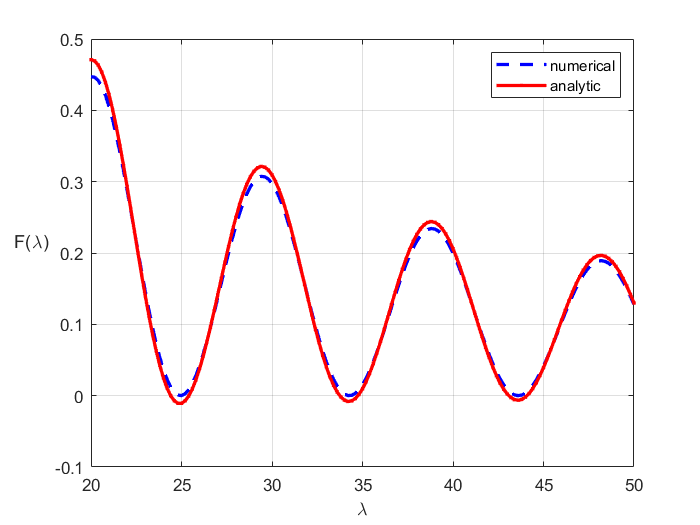}
  \caption{\newline$\beta=3$, $\gamma=3/5$}
\end{minipage}
\end{figure*}

\appendix
\section{Proof of $Q(\eta)X(\xi)=0$}

The notation in the following lemma is that of 
Lemma \ref{lem:E2}.

\begin{lemma}
\la{lem:app}
Assume that $\eta\in\R^2$ satisfies
$\gamma\eta_1^2 +2\mu\eta_1\eta_2 +\gamma\eta_2^2 =0$ and
that $r>0$ is defined by \eqref{g:bo1}. Let
$(\xi_1,\xi_2) =r(\eta_2,\eta_1)$. Then $Q(\eta)X(\xi)=0$.
\end{lemma}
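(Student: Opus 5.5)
By homogeneity we may assume $|\eta|=1$ and write $\eta=(\cos\theta,\sin\theta)$. The hypothesis $\gamma\eta_1^2+2\mu\eta_1\eta_2+\gamma\eta_2^2=0$ is then equivalent to $\sin(2\theta)=-\gamma/\mu=-4\gamma/(q+\beta+1)$, as in \eqref{g:sif}. Consequently $\cos^2(2\theta)=2q/(q+\beta+1)$, as in \eqref{g:cos2}, while the sign of $\cos(2\theta)$ remains free. With $\xi=r(\eta_2,\eta_1)$ we have
\[
X(\xi)=[\,r^2\eta_2^2,\; r^2\eta_1\eta_2,\; r^2\eta_1^2,\; 1\,]^T .
\]
I will verify the four components of $Q(\eta)X(\xi)$ separately.

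\emph{Rows 1 and 3.} The first component equals
\[
r^2\bigl(\eta_2^2+2\gamma\eta_1\eta_2+Q_{13}\eta_1^2\bigr)-\bigl(3\eta_1^2+6\gamma\eta_1\eta_2+\beta\eta_2^2\bigr),
\]
which is a quadratic form in $\eta$. Writing $\eta_1^2,\eta_2^2=\tfrac12(1\pm\cos 2\theta)$ and $\eta_1\eta_2=\tfrac12\sin2\theta$, it becomes $a+b\cos(2\theta)$, where $a$ and $b$ depend only on $\beta,\gamma,q$ once we substitute $\sin2\theta=-4\gamma/(q+\beta+1)$ and $r^2=(3q+\beta-3)/(q-\beta+3)$. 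Since the sign of $\cos2\theta$ can be either, I will show $a=0$ and $b=0$ separately. Both are rational identities in $\beta,\gamma,q$. I will clear denominators and eliminate $\gamma^2$ via $16\gamma^2=(\beta+1)^2-q^2$, which reduces each identity to polynomial identities in $\beta,q$; these should factor. The third row follows from the first by symmetry: $Q(\eta)$ is invariant under the simultaneous swap $\eta_1\leftrightarrow\eta_2$ and interchange of indices $1\leftrightarrow3$, the vector $X(\xi)$ behaves the same way under $\eta_1\leftrightarrow\eta_2$, and the constraint is symmetric.

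\emph{Row 2.} The second component is
\[
r^2\bigl(2\gamma\eta_2^2+Q_{22}\eta_1\eta_2+2\gamma\eta_1^2\bigr)-\bigl(4\beta\eta_1\eta_2+6\gamma\bigr).
\]
It involves only $|\eta|^2=1$ and $\eta_1\eta_2=\tfrac12\sin2\theta$, so substitution gives a single identity in $\beta,\gamma,q$, which I will handle in the same way.

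\emph{Row 4.} Instead of expanding this row directly, I will use the identity $X^TQX=\sum_i X_i(QX)_i$ with $X_4=1$. Given the first three rows, this gives $(QX)_4=T(\xi,\eta)={\rm Re}\,A(\xi+i\eta)+\kbg A(\eta)$. Now Lemma \ref{g:agg3} gives ${\rm Re}\,A(\xi+i\eta)$ explicitly in terms of $r$ and $\sin2\theta$, and Lemma \ref{lem:5.3}(ii) gives $A(\eta)=q(q-\beta+3)/(2(q+\beta+1))$. Substituting these expresses $T$ as a rational function of $\beta,q$ that should simplify to $0$.

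The main obstacle is the bookkeeping in the row~1 and row~2 identities, given the entries $Q_{13}$ and $Q_{22}$ with denominator $3q+\beta-3$. I expect the $\cos2\theta$-coefficient $b$ in row~1 to be the most delicate, since it must vanish without help from the constraint. I would first check it symbolically, then organize the factorization around the factors $q\pm(\beta-3)$ and $3q+\beta-3$.
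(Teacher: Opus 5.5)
Your proposal is correct and is essentially the paper's proof. You normalize $|\eta|=1$, rewrite the entries in terms of $2\theta$, and show that the $\cos(2\theta)$-coefficient and the remaining part of the first three components vanish separately using \eqref{g:bo1} and \eqref{g:sif}; your only deviations are that you obtain row 3 from the $1\leftrightarrow 3$, $\eta_1\leftrightarrow\eta_2$ symmetry, and row 4 from $(Q(\eta)X(\xi))_4=X(\xi)^TQ(\eta)X(\xi)=T(\xi,\eta)$ together with Lemmas \ref{g:agg3} and \ref{lem:5.3}(ii) instead of expanding $v_4$ directly. Both deviations are valid, the latter reducing to $(q-\beta+3)^2+8q(q+\beta-3)=(3q+\beta-3)^2$, and the $\cos(2\theta)$-coefficient you expected to be delicate is in fact immediate, since $r^2(q_{13}-1)=-(\beta-3)$ where $q_{13}$ is the $(1,3)$ entry of $Q(\eta)$.
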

\proof
Without loss of generality we may assume that $|\eta|=1$.
Writing $\eta=(\cos\theta , \sin\theta)$ it then follows that
\be
\sin(2\theta) =-\frac{\gamma}{\mu}  = 
-\frac{4\gamma}{q+ \beta+1}.
\la{ll}
\ee
We express the entries of $Q(\eta)=\{ q_{ij}(\eta)\}$ which
depend on $\eta$ in terms of $2\theta$: since
\[
 \eta_1^2 = \frac{1+\cos(2\theta)}{2} \; , \qquad
 \eta_1\eta_2=\frac{\sin(2\theta)}{2} \; , \qquad
  \eta_2^2 = \frac{1-\cos(2\theta)}{2} \, ,
\]
we have
\[
\begin{array}{l}
q_{14} =  \frac{\beta-3}{2}\cos(2\theta) -3\gamma \sin(2\theta)  - \frac{\beta+3}{2} \\[0.2cm]
q_{24} = -6\gamma -2\beta\sin(2\theta) \\[0.2cm]
q_{34} =-\frac{\beta-3}{2}\cos(2\theta) -3\gamma \sin(2\theta)  - \frac{\beta+3}{2} \\[0.2cm]
q_{44} =(\kbg +1)\Big( 1 + \dfrac{\beta-1}{2}\sin^2(2\theta)
+2\gamma\sin(2\theta)\Big)
\end{array}
\]
We also have
\begin{eqnarray*}
X(\xi) &=&[ r^2\eta_2^2 \; , \;   r^2 \eta_1\eta_2 \; , \;   r^2\eta_1^2  \; , \; 1 ]^{T} \\[0.2cm]
&=& \Big[\frac{r^2\big(1-\cos(2\theta)\big)}{2} 
\; , \; \frac{r^2\sin(2\theta)}{2}
\; , \;   \frac{r^2\big(1+\cos(2\theta)\big)}{2} \; , \; 1 \Big]^T.
\end{eqnarray*}
Let $v=Q(\eta)X(\xi)$. We have
\begin{eqnarray*}
v_1 &=& \frac{r^2}{2}\big(1-\cos(2\theta)\big) +\gamma r^2\sin(2\theta)
 + \frac{r^2\big(1+\cos(2\theta)\big)\big(\beta^2-5\beta+6-(\beta-6)q\big)}{2(\beta-3+3q)} \\
 && +  \frac{\beta-3}{2}\cos(2\theta) -3\gamma \sin(2\theta)  - \frac{\beta+3}{2} \\
 &=&\cos(2\theta) \Big[-\frac{r^2}{2} + 
 \frac{r^2\big(\beta^2-5\beta+6-(\beta-6)q\big)}{2(\beta-3+3q)}  +\frac{\beta-3}{2}    \Big]  \\
&& + \sin(2\theta) (r^2 \gamma 
-3\gamma) 
 + \frac{r^2}{2}+\frac{r^2\big(\beta^2-5\beta+6-(\beta-6)q\big)}{2(\beta-3+3q)} - \frac{\beta+3}{2}.
\end{eqnarray*}
Substituting $r^2$ from \eqref{g:bo1} we find that
the square bracket that multiplies $\cos(2\theta)$ is equal to zero.
Moreover, applying \eqref{g:bo1} and \eqref{g:sif}
we see that the remaining terms also add to zero.
Concerning $v_2$ we have
\begin{eqnarray*}
v_2&=& 2\gamma  \frac{r^2\big(1-\cos(2\theta)\big)}{2} 
+\frac{4\beta-12 +(8\beta-12)q}{3q+\beta-3} \cdot 
\frac{r^2\sin(2\theta)}{2} \\
&& + \, 2\gamma \frac{r^2\big(1+\cos(2\theta)\big)}{2} -6\gamma
-2\beta\sin(2\theta) \\
&=&2\gamma(r^2-3) +\Big(
\frac{r^2\big(4\beta-12 +(8\beta-12)q\big)}{2(3q+\beta-3)} -
2\beta\Big)\sin(2\theta).
\end{eqnarray*}
Substituting for $r^2$ and $\sin(2\theta)$ from \eqref{g:bo1} and \eqref{ll}
we find that this is zero.

It is readily seen that $v_3=v_1$ and therefore $v_3=0$.

Finally, substituting also $\cos^2(2\theta)=1-\sin^2(2\theta)$, we have
\begin{eqnarray*}
v_4&=& \Big(   \frac{\beta-3}{2}\cos(2\theta) -3\gamma\sin(2\theta)
-\frac{\beta+3}{2}\Big)  \frac{r^2\big(1-\cos(2\theta)\big)}{2} \\
&&+\big( -6\gamma-2\beta\sin(2\theta)\big)\, \frac{r^2\sin(2\theta)}{2} \\
&& + \Big(   -\frac{\beta-3}{2}\cos(2\theta) -3\gamma\sin(2\theta)
-\frac{\beta+3}{2}\Big)  \frac{r^2\big(1+\cos(2\theta)\big)}{2} \\
&& +(\kbg +1)\Big( 1+\frac{\beta-1}{2}\sin^2(2\theta)
+2\gamma\sin(2\theta)\Big) \\
&=&  \Big(  \frac{ (\kbg+1)(\beta-1)}{2} -\frac{\beta+3}{2}r^2
\Big)\sin^2(2\theta) \\
&& + \big( -6\gamma r^2+2\gamma(\kbg+1) \big)\sin(2\theta)
+\kbg+1-\beta r^2
\end{eqnarray*}
Once again, this vanishes after we substitute for $r^2$
and $\sin(2\theta)$ from \eqref{g:bo1} and \eqref{ll}. This comletes the proof.
\endproof

\section{The strongly convex case}

We present here a sketch of the proof of Efgrafov et al. \cite{EP1}
in the strongly convex case $(\beta,\gamma)\in E_0$. 

Let $x\in\R^2\setminus\{0\}$. By the strict convexity of the symbol there exists a unique 
$q=q(x)\in\mathbb{R}^2$ such that $\frac{1}{4}\nabla A(q) =x$. The points
\[
z_*^{\pm} = e^{\pm\frac{\pi i}{3}} q   =: \pm \xi_0 +i\eta_0 \, ,
\]
are then critical points of $\phi$ and, we will see, are those that contribute
to the asymptotic behavior of $F(\lambda)$. To see this, we once more
change the domain of integration from $\mathbb{R}^2$ to $\mathbb{R}^2+i\eta_0$ and the 
crucial property is that
\[
{\rm Re} \, A(\xi +i\eta_0 ) + 8A(\eta_0)\geq 0  , \qquad \xi\in\mathbb{R}^2,
\]
with equality only at $\xi=\pm\xi_0$, cf. Lemma\ref{lem:sc}.
We also have
\[
{\rm det} (\phi_{z_iz_j})|_{z=z_*^{\pm}} = h(x)^{\nicefrac{4}{3}} 
e^{\pm\nicefrac{2\pi i}{3}}
\]
where the function $h$ is positively homogeneous of degree one. Hence
\[
{\rm contr}(z_*^{\pm}) =\frac{2\pi}{\lambda} h(x)^{-\nicefrac{2}{3}} 
e^{\mp \nicefrac{\pi i}{3}} 
\exp\Big(-\frac{3}{8}A(q)\lambda\Big) \exp\Big(\pm\frac{3\sqrt{3}}{8}A(q)\, i\Big).
\]
Adding the two contributions we obtain that
$$
F(\lambda) \sim  \frac{4\pi}{\lambda} h(x)^{-2/3}
\exp\Big(-\frac{3}{8}A(q)\lambda\Big) \cos\Big(\frac{3\sqrt{3}}{8}A(q) -\frac{\pi}{3} \Big)
$$
It is straighforward to see that $p(x) = A(q)^{3/4}$ and applying \eqref{g:GF}
we therefore conclude that as $t\to 0+$,
\[
G(x,t) \sim \frac{2^{1/3}}{\pi} 
h(x)^{-2/3} t^{-1/3}\exp\Big(-\sigma_2 \frac{p(x)^{\frac{4}{3}}}{t^{\frac{1}{3}}} \Big) 
\cos\Big(\sigma_2\, \sqrt{3}\,  \frac{p(x)^{\frac{4}{3}}}{t^{\frac{1}{3}}}  -\frac{\pi}{3}\Big) .
\]

\

\

{\bf Data availability.} No data was used for the research described in this article.


\end{document}